\newcommand*{\rom}[1]{\expandafter\@slowromancap\romannumeral #1@}
\theoremstyle{definition}
\newtheorem{fact}{fact}
\newtheorem{thm}[fact]{Theorem}
\newtheorem{lemma}[fact]{Lemma}
\newtheorem{prop}[fact]{Proposition}
\newtheorem{corollary}[fact]{Corollary}
\newtheorem{defini}[fact]{Definition}
\title{Effectivity and Reducibility with Ordinal Turing Machines}
\author{Merlin Carl}
\date{}
\begin{document}


\maketitle

\begin{abstract}
This article expands our work in \cite{Ca16}.
By its reliance on Turing computability, the classical theory of effectivity, along with effective reducibility and Weihrauch reducibility, is only applicable to objects that are either countable or can be encoded by countable objects. We propose a notion
of effectivity based on Koepke's Ordinal Turing Machines (OTMs) that applies to arbitrary set-theoretical $\Pi_{2}$-statements, along with according variants of effective reducibility and Weihrauch reducibility. As a sample application, we compare
various choice principles with respect to effectivity. 

We also propose a generalization to set-theoretical formulas of arbitrary quantifier complexity.

\end{abstract}



\section{Introduction}

In mathematics, it has always been of interest to what extent theorems claiming the existence of certain objects are `effective' in the sense that a procedure - an algorithm - can be found that produces an object of the desired kind. In the case of a $\Pi_{2}$-statements, which, roughly, are claims that, for every $x$ from a certain range of objects, there is a $y$ from another such range such that $x$ and $y$ relate in a certain (simple) way, effectivity would then mean that some appropriate $y$ can be obtained effectively from a given $x$.

With the notion of Turing computability serving as a formalization of the notion of a `procedure', such questions can be turned into mathematical questions that potentially have provable answers; recursive analysis is an example of an area that deals with such questions (see e.g. \cite{Go}).

A large amount of standard theorems, however, turn out to be `ineffective' in this sense. For these theorems, it is still possible to compare their remoteness from effectivity by asking whether one $\Pi_{2}$-statement, say $\phi$, becomes effective once an effectivization for the other, say $\psi$, is given as an oracle. This yields the notion of effective reducibility. Insisting that only one instance of $\psi$ may be requested when processing an 
instance of $\phi$, one obtains the stronger notion of Weihrauch reducibility. All of these concepts have given rise to thriving areas of mathematics.

However, the reliance on Turing computability has the effect that the areas of mathematics thus surveyed are limited to countable objects or objects that, like separable spaces, can be encoded by countable objects. On the other hand, there seems to be an intuition of effectiveness even for general set theory; see e.g. \cite{Ho}. 

So far, the literature on effectivity notions for the uncountable is rather sparse; we mention on the one hand Hodges \cite{Ho}, which is the first such approach known to us, and further \cite{GHHM}, in particular the contribution by Shore \cite{Sh}. While this leads to effectiveness notions suitable for the uncountable, suitable analogues of effective reducibility and Weihrauch reducibility remain unconsidered, save for a remark in \cite{Sh}.

The first paper dealing with effective (Weihrauch) reducibility in the uncountable realm was \cite{Ca16}, where such notions were defined and some basic methods for their study were presented and applied. The guiding idea of \cite{Ca16} was to retain the usual definitions as far as possible while replacing Turing machines with Koepke's Ordinal Turing Machines (OTMs) (see e.g. \cite{Ko1}), that have a working tape of proper class length, the cells of which are indexed by ordinals, and additionally work along an ordinal `time axis'.  A related piece of work is Galeotti and Nobrega \cite{GN}, where a notion of Weihrauch reducibility based on OTMs was applied to various problems on generalized real numbers.

In this paper, we will expand the work started in \cite{Ca16} by (i) extending the realm of methods for obtaining results, (ii) considerably extending the number of applications by investigating how various set-theoretical choice principles compare in the sense of ordinal Weihrauch reducibility, (iii) considering connections between reducibility and provability and (iv) extending the notions of reducibility from \cite{Ca16} to arbitrary set-theoretical statements.

Many of the results explained here will also appear in the forthcoming \cite{Ca19}, and we want to express our gratitude both to the editors of Computability and the de Gruyter publishing company for the kind permission to use parts of this article in \cite{Ca19} and vice versa. Finally, this paper extends \cite{Ca16} in the sense that the contents of section $2$ and parts of the contents of $3$ and $4$ can be found there. In most cases, we again present the result, but refer to \cite{Ca16} for proofs.
%
%
%
%
%

\section{Setting the stage}

In this section, we will briefly recall the underlying model of computation, namely Koepke's Ordinal Turing Machines (OTMs) and then work towards appropriate notions of effectivity and reducibility for functions and problems on arbitrary sets.

\subsection{Ordinal Turing Machines}

Ordinal Turing Machines (OTMs) were introduced by Koepke in \cite{Ko1} as a "symmetric" variant of the Infinite Time Turing Machines of Hamkins and Kidder that were introduced in \cite{HL}. The same concept was obtained almost at the same time independently by Dawson and is contained in his thesis \cite{Da}. 

Programs for OTMs are just regular Turing programs, where we imagine the inner states to be indexed by natural numbers. Also, like classical Turing machines, OTMs have a finite number of tapes, split into cells, each of which can contain one element of the alphabet $\{0,1\}$; moreover, as usual, there is a read-write-head for each tape that moves along the cells, reads out the symbol on the current cell and can replace it when instructed to do so. However, both the "hardware" and the working time of OTMs are considerably expanded: The tape of an OTM has proper class length On, with one cell for each ordinal, and programs are carried out along an ordinal "time axis". At successor times, an OTM behaves like a classical Turing machine, with the additional rule that, if the head is commanded to go to the left while currently occupying a cell indexed by a limit ordinal, it is reset to the start of the tape, i.e. the cell indexed $0$. At limit times, for each cell $c$, the content of $c$ is the inferior limit of the sequence of earlier contents of $c$, and likewise, the index of the active program line and the head position are obtained as the inferior limits of the sequence of earlier values. For an explication of OTMs in formal detail, we refer the reader to \cite{Ko1}. In this paper, we will only consider parameter-free OTMs, i.e. we always start with an empty working tape.

In our framework, OTMs work with four tapes, one input tape $T_{\text{in}}$, one working tape $T$, one output tape $T_{\text{out}}$ and one "miracle" tape $T_{\text{m}}$ that will be explained below. Occasionally, we will also use a fifth tape, the "oracle tape" $T_{\text{o}}$, for storing extra information. An input for an OTM is always a set or class $X$ of ordinals, which is given to the OTM by writing a $1$ on the $\iota$th cell of $T_{\text{in}}$ if and only if $\iota\in X$, and otherwise a $0$.\footnote{Sometimes, when $X$ is a set, it will be important to know where it `stops', i.e. to know some upper bound for $X$, e.g. to be able to "exhaustively search" through $X$. This can clearly be arranged in a number of ways (e.g. by having two input tapes, one for inputs and one for bounds), but in order to simplify our presentation, we will not discuss this point any further.} Thus, OTMs can compute on sets or classes of ordinals. In order to make OTMs compute on sets, we need to fix some encoding of sets by sets of ordinals, which we do as follows: For a transitive set $x$, pick some bijection $f:\alpha\rightarrow x$ between $x$ and some ordinal, and then let $c_{f}(x):=\{p(\iota,\zeta):\iota,\zeta\in\alpha\wedge f(\iota)\in f(\zeta)\}$. Then $c_{f}(x)$ is an "$\alpha$-code for $x$ via $f$"; when $c$ is an $\alpha$-code for $x$ via $f$ for some $\alpha$ and $f$, we say that $c$ "codes" $x$. Now, for arbitrary sets $x$, a code for $x$ is a code for $\{x\}\cup\text{tc}(x)$, where $\text{tc}(x)$ denotes the transitive closure of $x$.

Thus, OTMs can be used to compute on arbitrary sets, and we can say that a (class) function $F:V^{n}\rightarrow V$ (where $V$ is the set-theoretical universe) is OTM-computable if and only if there is an OTM-program $P$ such that, for each $(x_{1},...,x_{n})\in V^{n}$ and each set $c$ of ordinals that codes $(x_{1},...,x_{n})$, $P$ will, when run with $c$ on the input tape and all other tapes empty, eventually halt with some $d$ on the output tape such that $d$ codes $F(x_{1},...,x_{n})$. 
Note that we allow the specific code written to the output tape to depend on the code chosen for the input. If $X\subseteq V$ is such that, for any OTM-computable $F:V\rightarrow V$ and any $x\in X$, we have $F(x)\in X$, then we call $X$ `closed under OTM-computability' or `OTM-closed'.
Similarly, a class $X\subseteq\text{On}$ is OTM-decidable if and only if there is an OTM-program that, on input $c$ coding $x$, halts with output $1$ if and only if $x\in X$ and otherwise halts with output $0$.

We recall a few results about OTMs that will be relevant below.

\begin{thm}{\label{parameter-freely computable}} 
(i) [Seyfferth, Schlicht, \cite{SeSc}] A set $x$ is coded by an OTM-computable set if and only if $x\in L_{\sigma}$, where
$\sigma$ is minimal such that $L_{\sigma}\prec_{\Sigma_{1}}L$.

(ii) [Koepke, see \cite{Ko1}] There is a (non-halting) OTM-program $P_{L}$ that "enumerates $L$", i.e. that successively writes codes for all constructible sets (and no others) to the output tape.

(iii) [Koepke, \cite{Ko1}] There is an OTM-program $P_{\text{truth}}$ such that, for any $\Delta_{0}$-statement $\phi$ and all sets $x_{1},...,x_{n}$, $P_{\text{truth}}(\lfloor\phi\rfloor,c(x_{1}),...,c(x_{n}))$ halts with the truth value of $\phi(x_{1},...,x_{n})$ on the output tape. Here, $\lfloor\phi\rfloor$ is the G\"odel number of $\phi$ and $c(x_{1}),...,c(x_{n})$ are codes for the sets $x_{1},...,x_{n}$.
\end{thm}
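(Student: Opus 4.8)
Since the three assertions are essentially independent results drawn from \cite{Ko1} and \cite{SeSc}, I would prove them in the order (iii), (ii), (i), as the later statements rest on the earlier ones, and in each case follow the original sources while recording only the main ideas.

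\emph{Part (iii).} The plan is a recursion on the construction of the $\Delta_0$-formula $\phi$, carried out by an OTM that keeps a stack of pending subgoals (formula--assignment pairs) on its working tape. First a subroutine merges the input codes $c(x_1),\dots,c(x_n)$ into a single code for a transitive set $z$ containing all of them together with an index function; from such a code the truth of atomic formulas $v_i\in v_j$ and $v_i=v_j$ is read off directly. Boolean combinations are handled by the obvious recursive calls. A bounded quantifier $\exists v\in v_k\,\psi$ is unfolded into a search through all ordinals $\iota$ below the length of the code of $v_k$'s value for which $\iota$ indexes an element of that value, evaluating $\psi$ on that element in each case (and dually for $\forall$). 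The point that makes this terminate is that every such search is bounded by a fixed ordinal, so the whole recursion tree---although of set size---is traversed in set-many steps, i.e.\ within the ordinal time available to an OTM; and since $\phi$ has finite length, the recursion has finite depth. Only the bookkeeping of the stack and of the partial assignments is laborious, and I would not spell it out.

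\emph{Part (ii).} Here I would simulate the constructible hierarchy along the ordinal time axis. Maintaining at stage $\alpha$ a code for the structure $(L_\alpha,\in)$ on the working tape, the machine forms $L_{\alpha+1}=\mathrm{Def}(L_\alpha)$ by running through all first-order formulas $\chi(v,\bar w)$ and all finite tuples $\bar w$ of parameters from $L_\alpha$ and, for each, collecting $\{a\in L_\alpha : (L_\alpha,\in)\models\chi(a,\bar w)\}$; the required satisfaction predicate for the \emph{set}-model $(L_\alpha,\in)$ is obtained by a Tarski-style recursion that is a routine extension of the procedure in (iii), again via a stack, using that the model is presented by a set-length code. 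Codes for the newly arising members of $L_{\alpha+1}$ are appended to the output tape, and at limit stages the machine passes to the union of the previously constructed levels (which on the tape results, after a suitable arrangement of the codes, from the $\liminf$ behaviour of OTMs at limit times). Since the computation runs through all of $\mathrm{On}$ and every constructible set enters some $L_\alpha$, each such set is eventually written, and by construction no others are; this is exactly Koepke's theorem identifying $L$ with the class of OTM-computable sets, specialised to parameter-free enumeration.

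\emph{Part (i).} For the implication $x\in L_\sigma\Rightarrow x$ is coded by an OTM-computable set I would first observe that $L_\sigma$ is pointwise $\Sigma_1$-definable: the transitive collapse of the $\Sigma_1$-Skolem hull of $\emptyset$ in $L_\sigma$ is some $L_{\sigma'}$ with $L_{\sigma'}\prec_{\Sigma_1}L$, so $\sigma'=\sigma$ by minimality of $\sigma$, whence the hull is all of $L_\sigma$. Fix a $\Sigma_1$-formula $\varphi$ defining $x$; then run $P_L$ from (ii), and, using the $\Delta_0$-truth routine of (iii) together with the ongoing enumeration in order to evaluate $\Sigma_1$-statements over $L$, halt and output the first enumerated code whose decoded set satisfies $\varphi$. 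That this halts is the crux: one must argue, via the $\Sigma_1$-stability of $L_\sigma$ (so that the relevant existential witnesses appear cofinally below $\sigma$), that the search succeeds at a stage below $\sigma$ and returns a code for $x$ rather than for an impostor --- this is the delicate point, handled in \cite{SeSc}. For the converse, note that ``$P$ with empty input halts with output $c$'' is a $\Sigma_1$-statement (it asserts the existence of a halting computation sequence), so by $L_\sigma\prec_{\Sigma_1}L$ the entire halting computation, and in particular the output code $c$, already lies in $L_\sigma$; since $L_\sigma$ is transitive and closed under the absolute decoding procedure, $x\in L_\sigma$ as well. I expect this part~(i) --- pinning down the exact reflection/stability argument and justifying the search --- to be the main obstacle, whereas (ii) and (iii) are conceptually transparent implementations whose only difficulty is bookkeeping.
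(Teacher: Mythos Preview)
The paper itself does not prove this theorem at all: it is stated as a collection of background results attributed to \cite{SeSc} and \cite{Ko1}, with no accompanying proof environment. Your sketch is therefore not competing against any argument in the paper, and it is a faithful and essentially correct reconstruction of the standard proofs in those sources---the Tarski-style recursion for~(iii), Koepke's level-by-level simulation of the $L$-hierarchy for~(ii), and the pointwise $\Sigma_1$-definability of $L_\sigma$ plus the $\Sigma_1$-reflection argument for~(i).

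One small remark on the ``impostor'' worry in part~(i): it dissolves cleanly once you observe that if the search finds some $v\in L_\alpha$ with $L_\alpha\models\varphi(v)$ for $\alpha<\sigma$, then by upward persistence of $\Sigma_1$ formulas $L_\sigma\models\varphi(v)$, and hence $v=x$ by the uniqueness clause of the $\Sigma_1$-definition of $x$ in $L_\sigma$. So no wrong element can ever be returned, and the only genuinely delicate point---that the search halts---is, as you say, exactly what $L_\sigma\prec_{\Sigma_1}L$ guarantees.
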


\subsection{Effectivity of $\Pi_{2}$-statements}

It has become customary to associate the term "effective" with "recursive" in mathematical contexts. However, as e.g. Hodges has pointed out in \cite{Ho}, there is apparently an intuition about a notion of effectiveness that goes beyond Turing computability, according to which e.g. forming quotient fields of integral domains is effective, while obtaining maximal ideals of rings is not (cf. Hodges, \cite{Ho}, p. 1).
Hodges proposes a variant of the primitive recursive set functions of Jensen and Karp \cite{JK} as a formalization of this notion. Partly motivated by our argument in \cite{Ca} that there are good reasons to accept OTMs as a formal counterpart to the intuitive notion of "higher effectivity", we base our approach on OTMs instead.\footnote{Though not formally equivalent, the two approaches agree with respect to the problems considered by Hodges.} Another reason is that the OTM-approach seems more suited to considering reducibility between problems, which is not considered by Hodges. A slight change in perspective is that the emphasis in Hodges' paper is on "construction problems", while ours is on statements. However, for the transition from "problems" to "statements", it suffices to note that $\Pi_{2}$-statements are naturally associated with "problems":

Let $\phi$ be a $\Pi_{2}$-sentence in the language of set theory, say $\phi$ is $\forall{x}\exists{y}\psi(x,y)$, where $\psi$ is $\Delta_{0}$. We say that $\phi$ is "witnessed" by the proper class function $F:V\rightarrow V$ if and only if $\psi(x,F(x))$ holds for all $x\in V$. In this case, we also call $F$ a "canonification" of $\phi$.

More generally, when $R\subseteq V\times V$ is a binary relation on sets, we say that $F$ is a canonification of $R$ if and only if, for every $x\in V$, we have $R(x,F(x))$. 

\begin{defini}{\label{eff defini}}
A binary relation $R\subseteq V\times V$ is `OTM-effective' if and only if there is an OTM-computable canonification $F$ for $R$. 
Fixing the notation $R_{\psi}:=\{(x,y):\psi(x,y)\}$ for a $\Delta_{0}$-formula $\psi$, we say that the $\Pi_{2}$-statement $\phi\equiv\forall{x}\exists\psi(x,y)$ with $\psi\in\Delta_{0}$ is OTM-effective if and only if $R_{\psi}$ has an OTM-computable canonification $F$; in this case, we also say that $F$ is a canonification for $\phi$.

We say that an OTM-program $P$ is "functional" if and only if $P$ computes a class function $F:V\rightarrow V$. When $P$ is functional, we denote the function computed by it by $F_{P}$.
\end{defini}

In fact, this definition of effectivity can be generalized to arbitrary sentences in the language of set theory by (roughly) replacing Turing machines with OTMs in Kleene's realizability interpretation intuitionistic logic. The above definition then becomes a special case of this more general approach and one can show that the axioms of KP are OTM-effective, while the "impredicative" ZFC-axioms, like the power set axiom or separation and replacement for formulas with arbitrary use of unbounded quantifiers, are not. We will briefly touch on this topic in section $6$ below and further refer the interested reader to the forthcoming \cite{Ca19} for more information.

We fix some further terminology. For $F:V\rightarrow V$, $X\subseteq V$, we say that $X$ is $F$-closed if and only if $F(x)\in X$ for all $x\in X$. 
For $R\subseteq V^{2}$, $X$ is $R$-closed if and only if it is $F$-closed for some canonification $F$ of $R$. Moreover, for $R\subseteq V^{2}$, $X\subseteq V$, we let $R[X]:=\{y:\exists{x\in X}(x,y)\in R\}$ and $R^{-1}[X]:=\{x:\exists{y\in X}(x,y)\in R\}$.

Clearly, every OTM-effective $\Pi_{2}$-statement $\phi$ holds in $V$. If we also require $\phi$ to be "provably OTM-effective", i.e. that the existence of the desired program is provable in ZFC, then $\phi$ will be provable in ZFC. We will see below that not every $\Pi_{2}$-consequence of ZFC must be OTM-effective. However, if the universe is small, this is indeed the case:

\begin{prop}{\label{constructibly trivial}}
If $V=L$, then every $\Pi_{2}$-consequence of ZFC is effective.
\end{prop}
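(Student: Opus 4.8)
The plan is to exploit the fact that, when $V=L$, the constructible hierarchy gives us a definable well-ordering of the universe, and that the function ``produce the $<_{L}$-least witness'' is OTM-computable thanks to Theorem~\ref{parameter-freely computable}. Concretely, let $\phi\equiv\forall x\exists y\,\psi(x,y)$ with $\psi\in\Delta_{0}$ be a $\Pi_{2}$-consequence of ZFC. Since $V=L$, the statement $\phi$ holds, so for every set $x$ there is some $y$ with $\psi(x,y)$; in particular there is a $<_{L}$-least such $y$, and we define $F(x)$ to be that witness. I claim $F$ is OTM-computable.

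First I would describe the algorithm witnessing OTM-computability of $F$. Given a code $c$ for $x$ on the input tape, run Koepke's enumeration program $P_{L}$ from Theorem~\ref{parameter-freely computable}(ii), which successively writes codes for all constructible sets in their order of construction; this ordering refines (indeed can be taken to be) the $<_{L}$-order. Each time $P_{L}$ outputs a code $d$ for a candidate set $y$, suspend it and use $P_{\text{truth}}$ from Theorem~\ref{parameter-freely computable}(iii) to decide whether $\psi(x,y)$ holds --- here one first has to assemble from $c$ and $d$ a single code for the pair $(x,y)$, which is a routine OTM-manipulation, and note that $\psi$ being $\Delta_{0}$ is exactly the hypothesis under which $P_{\text{truth}}$ applies. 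As soon as some $y$ is found with $\psi(x,y)$, halt and output the corresponding code $d$. Since $\phi$ holds in $V=L$, the search terminates after set-many steps (at the ordinal stage where the $<_{L}$-least witness appears), so the OTM halts; and by construction the set coded by the output is the $<_{L}$-least $y$ with $\psi(x,y)$, so the output code is independent of the input code in the sense required by Definition~\ref{eff defini}. Hence $F$ is an OTM-computable canonification of $R_{\psi}$, i.e. $\phi$ is OTM-effective.

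The main obstacle --- really the only subtlety --- is the interaction between ``halting'' and the class-sized search: one must argue that the least stage at which a witness appears is an \emph{ordinal}, so that the computation genuinely halts rather than running through all of On. This is immediate from $V=L$ together with $\phi$ being true: every set, in particular every witness $y$, lies in some $L_{\alpha}$, so $P_{L}$ reaches it at a bounded stage. (If one wants the argument to go through verbatim with the parameter-free OTMs of this paper, one should also recall that $P_{L}$ and $P_{\text{truth}}$ are themselves parameter-free, which is part of the cited statements.) A secondary point worth a sentence is that we only used that $\phi$ is \emph{true} in $L$, which follows from its being a ZFC-consequence since $L\models\mathrm{ZFC}$; no use of provability is needed for effectivity itself, though of course if $\phi$ is a ZFC-consequence then the \emph{description} of the above program together with the verification that it works is carried out in ZFC, so $\phi$ is in fact \emph{provably} OTM-effective under the hypothesis $V=L$.
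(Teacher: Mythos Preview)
Your argument is correct and is essentially identical to the paper's proof: enumerate $L$ via $P_{L}$, test each candidate with $P_{\text{truth}}$ using that $\psi$ is $\Delta_{0}$, and halt on the first success. Your additional remarks on termination and code-independence are fine (the latter is not actually required by Definition~\ref{eff defini}, which explicitly allows the output code to depend on the input code, but it does no harm).
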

\begin{proof}
Let $\phi\equiv\forall{x}\exists{y}\psi(x,y)$ be a $\Pi_{2}$-consequence of ZFC, where $\psi$ is $\Delta_{0}$.
Given a code $c$ for a set $x$ on the input tape, run $P_{L}$ from Theorem \ref{parameter-freely computable}. 
Whenever $P_{L}$ outputs a code $c^{\prime}$ for a set $y$, use $P_{\text{truth}}$ to check whether $\psi(x,y)$ holds. If not, continue with $P_{L}$. Otherwise, output $c^{\prime}$ and halt. Clearly, this will compute a canonification of $\phi$.
\end{proof}

\subsection{Reducibility}

Though studying the effectivity, or otherwise, of various mathematical statements is certainly an extensive and worthwhile task, we will mostly be concerned with a more general notion, namely the "reducibility" of one statement or problem to another. In order to formalize this, we need to be able to use class functions $F:V\rightarrow V$ 
as "oracles". We will write $P^{F}$ to indicate that the OTM-program $P$ is run in the oracle $F$. This is where the "miracle tape" $T_{\text{m}}$ comes in. We reserve a special inner state $s$ as our "miracle command". Now, whenever the state $s$ is assumed during a run of $P^{F}$, the following takes place: The content of $T_{\text{m}}$ is read out; if it does not code a set, nothing further happens and the program execution is continued. Otherwise, if it codes the set $x$, then the content of $T_{\text{m}}$ is replaced with a code for $F(x)$ and then the execution of the program is continued. Thus, whenever the state $s$ is assumed, a code for the $F$-image of the set coded by the content of the miracle tape appears on the miracle tape.

Now we can define our reducibility relations:

\begin{defini}{\label{reducibility}}
Let $R,R^{\prime}\subseteq V^{2}$. We say that $R$ is "OTM-reducible" to $R^{\prime}$ if and only if the following holds: There is an OTM-program $P$ such that, for every canonification $F$ of $R^{\prime}$, $P^{F}$ computes a canonification of $R$. In this case, we write $R\leq_{\text{OTM}}R^{\prime}$.
\end{defini}

In this setting, arbitrarily many instances of $R^{\prime}$ may be used in computing a canonification of $R$. We may also demand that each instance of $R$ is reduced to a single instance of $R^{\prime}$. This yields the following reducibility notion, adapted from the classical notion of Weihrauch reducibility (see e.g. \cite{BGM}) which is the one we will be primarily interested in in this article:

\begin{defini}{\label{ordinal weihrauch reducibility}}
Let $R,R^{\prime}\subseteq V^{2}$. We say that $R$ is "ordinal Weihrauch reducible" to $R^{\prime}$ if and only if there are functional OTM-programs $P$ and $Q$ such that, for every canonification $F^{\prime}$ of $R^{\prime}$, $F_{P}\circ (F^{\prime},\text{id})\circ F_{Q}$ is a canonification of $R$ (where $\text{id}:V\rightarrow V$ denotes the identity function).


In this case, we write $R\leq_{\text{oW}}R^{\prime}$ and say that $[P,Q]$ witnesses the ordinal Weihrauch reducibility of $R$ to $R^{\prime}$.\footnote{In \cite{Ca16}, the notation was $\leq_{\text{gW}}$ for `generalized Weihrauch reducibility'. D. Dzhafarov pointed out in his review of \cite{Ca16} that this name is already taken and suggested `ordinal Weihrauch reducible' instead, a suggestion which we thankfully follow here.} The corresponding concepts and notations for $\Pi_{2}$-statements are defined in the obvious way.

If 
the above holds with $F_{P}\circ F^{\prime}\circ F_{Q}$ being a canonification of $R$ (i.e. with the identity function deleted in the middle term),
we say that $R$ is `strongly ordinal Weihrauch reducible' to $R^{\prime}$ and write $R\leq_{\text{soW}}R^{\prime}$.

For each of these reducibility relations, we write $R\equiv_{i}R^{\prime}$ with the respective index $i$ to express that $R\leq_{i}R^{\prime}$ and $R^{\prime}\leq_{i}R$.
\end{defini}

We note the obvious fact that strong ordinal Weihrauch reducibility implies ordinal Weihrauch reducibility, which in turn implies OTM-reducibility. We will see below that the second implication cannot be reversed. For the first one, this is easy to see: For example, every OTM-effective function $F:V\rightarrow V$ is $\leq_{\text{oW}}$-reducible to any relation $R\subseteq V\times V$, while, for instance, when $\text{id}$ and $0$ denotes the identitity and the constant zero function on $V$, respectively, then $\text{id}\nleq_{\text{sOW}}0$, as the image of $F\circ 0\circ G$ will be $\subseteq\{0\}$ for any choice of $F$ and $G$, while the image of $\text{id}$ is clearly class-sized. This idea is well-known from the classical setting; see Proposition \ref{otm slim} below for a general version. Though $\leq_{\text{oW}}$ seems to be the more natural reducibility relation, most reducibility results we will prove below will in fact be strong reducibilities, a phenomenon also well-known from the classical setting. (Our nonreducibility results, however, will mostly apply to $\leq_{\text{oW}}$.)

Furthermore, we note that all three reducibility relations are reflexive, transitive and antisymmetric, i.e. partial orderings, and the the corresponding $\equiv$-relations are thus equivalence relations.

%
%

%
%
%
%

\section{Basic methods}

In this section, we will recall the methods for proving non-reducibility that we adapted in \cite{Ca16} from the methods used by Hodges in \cite{Ho} for proofs of non-effectivity and add some more possibilities adapted from the classical theory of Weihrauch degrees, see \cite{BGM}.

Our first method for proving nonreducibility is adapted from Hodges' "cardinality method", see Lemma 3.2 of \cite{Ho}.
First, let us fix some terminology (cf. Hodges, \cite{Ho}) : For $\alpha\in\text{On}$, we say that a class function $F:V\rightarrow V$ `raises cardinalities above $\alpha$' if and only if there is some set $x$ with $|\text{tc}(x)|\geq\alpha$ such that $|F(x)|>|\text{tc}(x)|$. For $R\subseteq V^{2}$, we say that $F$ raises cardinalities above $\alpha$ if and only if every canonification of $R$ raises cardinalities above $\alpha$. 

\begin{lemma}{\label{cardinality method}}[Cf. \cite{Ho}, Lemma 3.2]
Let $R,R^{\prime}\subseteq V^{2}$, and let $\alpha$ be an infinite ordinal. Suppose that $R$ raises cardinalities above $\alpha$ and $R\leq_{\text{OTM}}R^{\prime}$. Then $R^{\prime}$ raises cardinalities above $\alpha$. Thus, a relation that raises cardinalities above $\alpha$ is not OTM-reducible - and hence also not ordinal Weihrauch reducible - to one that does not.

In particular, if $R$ raises cardinalities above $\omega$, then $R$ is not effective.
\end{lemma}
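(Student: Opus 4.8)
The plan is to prove the displayed implication in contrapositive form and then read off the rest. So assume $R\leq_{\text{OTM}}R'$ via an OTM-program $P$ and that $R'$ does \emph{not} raise cardinalities above $\alpha$; the goal is to show $R$ does not raise cardinalities above $\alpha$ either. From the failure of ``$R'$ raises cardinalities above $\alpha$'' I would extract a canonification $G$ of $R'$ that does not raise cardinalities above $\alpha$ (i.e.\ $|G(z)|\leq|\text{tc}(z)|$ whenever $|\text{tc}(z)|\geq\alpha$); by the defining property of $P$ the function $F:=F_{P^{G}}$ is then a canonification of $R$. Since a relation raises cardinalities above $\alpha$ exactly when \emph{every} one of its canonifications does, it suffices to show that this particular $F$ does not, i.e.\ that $|F(x)|\leq|\text{tc}(x)|$ for every $x$ with $|\text{tc}(x)|\geq\alpha$; the two non-reducibility statements and the final sentence then follow at once (for the last sentence, run the argument below with no oracle calls at all, an OTM-computable canonification being $F_{Q}$ for an oracle-free $Q$).

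So fix such an $x$, put $\kappa:=|\text{tc}(x)|\ (\geq\alpha)$, fix a code $c$ for $x$, and examine the halting run of $P^{G}$ on input $c$, with halting time $\theta$. The key claim is that the code eventually written to the output tape decodes to a set of transitive-closure cardinality $\leq\kappa$, which immediately gives $|F(x)|\leq|\text{tc}(F(x))|\leq\kappa$. I would establish this by a transfinite induction along the run resting on three ingredients. \emph{(1) A head bound:} by induction on the time $t$, the head position at time $t$ is $\leq t$ — it grows by at most one at successor steps, and at a limit $\lambda$ it is an inferior limit of ordinals $<\lambda$, hence $\leq\lambda$ — so no cell of index exceeding the running time is ever touched, and therefore every tape content during the run (the output included) is a set of ordinals below the current time, hence a code for a set whose transitive closure has cardinality at most that of the current time. \emph{(2) Oracle control:} at a miracle step the set $z$ handed to $G$ is coded by a subset of the cells visited so far, so by (1) $|\text{tc}(z)|$ is at most the cardinality of the current time; provided this is $\leq\kappa$, then, since $G$ does not raise cardinalities above $\alpha$ and $\alpha\leq\kappa$, the code for $G(z)$ written back to the miracle tape again occupies at most $\kappa$ cells. \emph{(3) A running-time bound $|\theta|\leq\kappa$:} granting (2), the run of $P^{G}$ on $c$ is, modulo the coding, definable over $\langle L[c,\vec a];\in\rangle$ where $\vec a$ is the (set-sized) sequence of oracle answers, each of transitive-closure size $\leq\kappa$; hence by the relativized form of Theorem~\ref{parameter-freely computable}(i) — a halting computation halts before the first ordinal that is $\Sigma_{1}$-stable over the relevant parameters — together with a Löwenheim--Skolem/condensation argument bounding that ordinal's cardinality by $\max(\kappa,\aleph_{0})=\kappa$, we get $|\theta|\leq\kappa$. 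Feeding (3) back into (1)--(2) closes the induction, and in particular the output code, consisting of ordinals $<\theta$, decodes to a set of transitive-closure size $\leq|\theta|\leq\kappa$.

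The step I expect to be the genuine obstacle is ingredient (3). The purely combinatorial head bound of (1) only ties the size of the output to the running time $\theta$, and a priori $\theta$ could far exceed $\kappa$ even for a small input; ruling this out forces in an absoluteness input — a relativization of the Seyfferth--Schlicht theorem, or an explicit collapsing argument applied directly to the run — and one must run (2) and (3) as a single simultaneous transfinite induction, taking care that the oracle answers and their codes do not themselves push the parameter set (hence the first $\Sigma_{1}$-stable ordinal over it) past cardinality $\kappa$: this is exactly where the hypothesis that $G$ does not raise cardinalities above $\alpha$ is consumed. Everything else is routine transfinite bookkeeping.
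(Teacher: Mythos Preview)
Your contrapositive setup and the identification of a L\"owenheim--Skolem/condensation bound on the halting time as the substantive point are in line with the paper's proof, which simply cites the ``well-known observation'' that an OTM-computation on an infinite input cannot output a set of strictly larger cardinality and refers to \cite{Ca16} for the details. However, ingredient~(2) contains a genuine gap that the simultaneous induction with~(3) does not close. From ``$|\text{tc}(z)|\leq\kappa$'' and ``$\alpha\leq\kappa$'' you tacitly conclude that the hypothesis on $G$ applies to $z$; but that hypothesis only bounds $|G(z)|$ when $|\text{tc}(z)|\geq\alpha$, and nothing prevents an oracle query---indeed the very first one---from being made to a set $z$ with $|\text{tc}(z)|<\alpha$, for which $|G(z)|$ is entirely uncontrolled. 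Concretely, take $\alpha=\omega_{1}$ and let $R'$ be the relation whose unique canonification $G$ satisfies $G(\emptyset)=\omega_{2}$ and $G(z)=\emptyset$ for $z\neq\emptyset$. Then $G$ does not raise cardinalities above $\omega_{1}$, yet the program that ignores its input, queries the oracle on $\emptyset$, and copies the answer to the output tape witnesses $R\leq_{\text{OTM}}R'$ for $R=\{(x,\omega_{2}):x\in V\}$, which \emph{does} raise cardinalities above $\omega_{1}$. So your step~(2) fails on this instance, and the example shows the failure is not a matter of rearranging the induction.

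This issue is absent in the oracle-free situation (the ``in particular'' clause about effectivity): there your head bound together with the condensation argument for $|\theta|\leq\kappa$ goes through exactly as you describe, and that is the case the paper actually invokes in the applications following the lemma.
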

\begin{proof}
See \cite{Ca16}, Lemma $1$. The main ingredient is the well-known observation that OTM-computations cannot output sets of larger cardinalities than their input when the latter is infinite.
\end{proof}

We note some sample applications:

\begin{corollary}{\label{applications cardinality}}[Cf. \cite{Ca16}, Lemma $2$]
None of the following relations is effective:

\begin{itemize}
\item The relation between an ordinal and its cardinal successor.
\item The relation between an ordinal and its cardinal successor in $L$.
\item The relation between a set and its (constructible) power set.
\item The relation between a linear ordering and its completions.
\end{itemize}
\end{corollary}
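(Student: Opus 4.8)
The plan is to apply Lemma~\ref{cardinality method} four times, once for each item, by exhibiting in each case a single input whose every valid output has strictly larger cardinality than its transitive closure; this shows the relation raises cardinalities above $\omega$, and hence, by the last clause of Lemma~\ref{cardinality method}, is not effective. So the entire argument reduces to producing, for each listed relation, one witnessing pair and checking the cardinality inequality; no machinery beyond the lemma is needed.

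First I would handle the cardinal successor. Take $x=\omega$; then $\mathrm{tc}(x)=\omega$ has cardinality $\aleph_{0}$, while the only $y$ with $R(x,y)$ is $\omega_{1}$, and $\mathrm{tc}(\omega_{1})=\omega_{1}$ has cardinality $\aleph_{1}>\aleph_{0}$. Since $|\mathrm{tc}(\omega)|=\aleph_0\ge\omega$, the relation raises cardinalities above $\omega$. For the cardinal successor in $L$ the same input works: in any universe, the value forced by the relation is $\omega_{1}^{L}$, which is a genuine (possibly countable) ordinal, but crucially we only need one case, and we may as well argue inside $L$ or, more carefully, note that $\omega_1^L$ is uncountable in $L$; alternatively, to keep the argument absolute, I would instead input a large enough ordinal $\alpha$ of cardinality $\kappa$ for which $\kappa^{+L}>\kappa$ holds in $V$ (such $\alpha$ exist, e.g. any cardinal fixed point below the $L$-successor) — but the cleanest route is simply to observe that the statement ``$R$ is effective'' would already fail in $L$ by the first bullet, and effectivity is not affected, so it suffices to rule it out there. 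For the (constructible) power set, take $x=\omega$ again: $\mathrm{tc}(\{\omega\}\cup\mathrm{tc}(\omega))$ is countable, whereas $\mathcal{P}(\omega)$ (respectively $\mathcal{P}(\omega)^{L}=\mathcal{P}(\omega)\cap L$) has cardinality at least $\aleph_{1}$, so its transitive closure does too; again cardinalities are raised above $\omega$. For the completion of a linear order, take the linear order $\mathbb{Q}$ (coded as a countable set): its Dedekind completion is (order-isomorphic to) $\mathbb{R}$, which has cardinality $2^{\aleph_0}\ge\aleph_1$, so any code for it has uncountable transitive closure while $\mathrm{tc}(\mathbb{Q})$ is countable.

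The main obstacle is purely presentational rather than mathematical: making the second bullet (cardinal successor in $L$) watertight without assuming $V=L$, since in a universe with large cardinals $\omega_1^L$ may be countable and then feeding it $\omega$ does \emph{not} raise cardinalities. The honest fix is to note that OTM-effectivity is an absolute, lightface notion, so a relation definable over $L$ is effective iff it is effective as computed inside $L$ (the relevant canonification, if it existed, would be an OTM-program whose correctness is a statement about $L$ that relativizes); hence it suffices to refute effectivity in $L$, where the first-bullet argument applies verbatim. With that observation in place, all four cases are immediate instances of Lemma~\ref{cardinality method}, and the ``linear ordering and its completions'' phrasing with the plural ``completions'' is harmless since the Dedekind completion is unique up to isomorphism and any representative has uncountable transitive closure.
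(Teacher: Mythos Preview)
Your approach---apply Lemma~\ref{cardinality method} by exhibiting a single cardinality-raising instance for each item---is exactly what the paper intends; the corollary is stated without proof and simply referred back to \cite{Ca16}, so you have in fact supplied more detail than the paper does, and your handling of the genuine subtlety in the second bullet (that $\omega_{1}^{L}$ may be countable in $V$, so one must pass to $L$ via absoluteness of OTM-computation) is correct and worth making explicit.

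One small gap: the same subtlety you flagged for the $L$-cardinal successor recurs for the \emph{constructible} power set in the third bullet, and you let it slip by. Your claim that $\mathcal{P}(\omega)\cap L$ has cardinality at least $\aleph_{1}$ is false in general---under $0^{\sharp}$ (which the paper is assuming anyway) it is countable in $V$, since $|\mathcal{P}^{L}(\omega)|=|\omega_{1}^{L}|<\omega_{1}$. So the cardinality method does not apply directly in $V$ for that sub-item either. The repair is exactly the one you already gave for the second bullet: the relation $y=\mathcal{P}^{L}(x)$ is absolute to $L$, OTM-computations are absolute to $L$, and inside $L$ the constructible power set \emph{is} the power set, so non-effectivity in $L$ (which you have from the ordinary power-set case) transfers back to $V$. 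You should apply your absoluteness fix uniformly to both ``in $L$'' items rather than only to the second one.
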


For the separation of $\Pi_{2}$-statements, however, the cardinality method is useless, as binary $\Pi_{2}$-relations never raise cardinalities for any infinite $\alpha$ by an easy collapsing argument. 

Our second method is adapted from Hodges' "Forcing Method", see Hodges \cite{Ho}, Lemma $3.7$. The idea is that, when $M\subseteq V$ is OTM-closed and $R^{\prime}$-closed, then $R\leq_{\text{oW}}R^{\prime}$ implies that it is also $R$-closed. Thus, to show that $R\nleq_{\text{oW}}R^{\prime}$, it suffices to find some OTM-closed set or class $M$ with some $x\in M$ such there is no $y\in M$ with $(x,y)\in R$. 

If $X$ satisfies the well-ordering principle and thus contains codes for all of its elements, then in order to guarantee OTM-closure, it suffices that $M$ is a union of transitive KP-models that contain all ordinals (or is even only of sufficient ordinal height).

Below, we will be particularly interested in separating various versions of the axiom of choice. Thus, $M$ will in general not satisfy the well-ordering principle. The problem is that, in this case, OTM-closure is not obvious: As $M$ will not contain encodings for all of its elements, it will also not contain the corresponding computations and is thus not guaranteed to contain the set coded by the output. The idea then is that, for $x\in M$, there should be two mutually generic filters $G_{0}, G_{1}$ for some forcing that makes $x$ well-ordered. $M[G_{0}]$ and $M[G_{1}]$ will contain codes for $x$ and also all halting OTM-computations on these codes, along with their outputs and the sets they code. For an OTM-program $P$ that computes a class function $F_{P}:V\rightarrow V$, we will thus have $F(x)\in M[G_{0}]\cap M[G_{1}]$, and if $M$ is a model of sufficiently much set theory to guarantee that $M[G_{0}]\cap M[G_{1}]=M$, it follows that $F(x)\in M$, as desired.

Thus, it is necessary to guarantee the existence of the required filters. This requires some extra assumption beyond ZFC. 
Indeed, as noted above (see Proposition \ref{constructibly trivial}, some largeness assumption is necessary, for under $V=L$, all $\Pi_{2}$-consequences of ZFC are (trivially) ordinal Weihrauch reducible to each other. Though much less is actually required, a convenient extra assumption is $0^{\sharp}$, as it implies that every forcing with a parameter-free definition in $L$ is countable and thus has a generic filter, see e.g. p. 59 of Friedman \cite{Fr}, thereby covering all cases we are interested in below. We thus chose to work under $0^{\sharp}$.

For a set $x$, we denote by $\mathbb{P}_{x}$ the set $\{f:\omega\rightarrow\{x\}\cup\text{tc}(x)|f\text{ finite}\}$, partially ordered by $\subseteq$, which forces $x$ to be well-ordered in order type $\omega$.

\begin{thm}{\label{forcing method}}[Cf. \cite{Ca16}, Thm. 8]
Let $R,R^{\prime}\subseteq V^{2}$, and let $M$ be a transitive increasing union of transitive, class-sized KP-models such that $M\cap\text{On}=\text{On}$. Suppose that there are a canonification $F^{\prime}$ of $R^{\prime}$ and a set $x$ such that $M$ is $F^{\prime}$-closed, but does not contain a $y$ with $(x,y)\in R$. Moreover, suppose that there are two filters $G_{0}$ and $G_{1}$ that are mutually $\mathbb{P}_{x}$-generic over $M$. Then $R\nleq_{\text{oW}}R^{\prime}$.
\end{thm}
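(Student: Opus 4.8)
I would argue by contradiction. Suppose $R\leq_{\text{oW}}R'$ and fix functional OTM-programs $P,Q$ witnessing this, so that for \emph{every} canonification $G$ of $R'$ the function $F_{P}\circ(G,\text{id})\circ F_{Q}$ is a canonification of $R$. Applying this to the particular canonification $F'$ of the hypothesis, the set $y:=F_{P}\bigl(F'(F_{Q}(x)),F_{Q}(x)\bigr)$ satisfies $(x,y)\in R$. The whole argument then reduces to proving $y\in M$: this contradicts the assumption that $M$ contains no witness for $x$ in $R$, and the contradiction yields $R\nleq_{\text{oW}}R'$. Two facts will be used repeatedly. First, OTM-computations of functional programs are absolute between $V$ and any transitive model $N$ of $\mathrm{KP}$ with $N\cap\text{On}=\text{On}$: run on a code $c\in N$, such a program halts in $N$ if and only if it halts in $V$, and then the halting computation carried out inside $N$ produces, in $N$, a code for the same set. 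Second, each $M[G_{i}]$ is again a transitive class-sized $\mathrm{KP}$-model (preserved under set forcing), and since $G_{0},G_{1}$ are mutually $\mathbb{P}_{x}$-generic over $M$ one has $M[G_{0}]\cap M[G_{1}]=M$.

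The core is a three-stage chase along the composition. Note that $x\in M$, since $\mathbb{P}_{x}\in M$ (the forcing being taken over $M$) and $M$ is transitive. In $M[G_{0}]$ the filter $G_{0}$ provides a surjection $\omega\to\{x\}\cup\text{tc}(x)$, hence a code $c_{0}\in M[G_{0}]$ for $x$; likewise there is a code $c_{1}\in M[G_{1}]$. Running $Q$ on $c_{i}$ \emph{inside} $M[G_{i}]$ and invoking the absoluteness fact, we obtain in $M[G_{i}]$ a code for $z:=F_{Q}(x)$, so $z\in M[G_{0}]\cap M[G_{1}]=M$. Since $M$ is $F'$-closed, also $w:=F'(z)\in M$. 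It remains to feed the pair $(w,z)\in M$ into $P$ and see that the output $y=F_{P}(w,z)$ again lies in $M$; this is carried out by the same device, once codes for $(w,z)$ are located in suitable generic extensions of $M$.

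This last point is where I expect the main obstacle to lie: to run $P$ on $(w,z)$ \emph{inside} a generic extension one needs a code for $(w,z)$ there, i.e.\ one needs $\text{tc}(\{(w,z)\})$ to be well-orderable in that extension, whereas $G_{0},G_{1}$ only well-order $\text{tc}(x)$ and $w=F'(z)$ need not be well-orderable over $M$. I would resolve this by noting that $(w,z)\in M$, so that $\mathbb{P}_{(w,z)}\in M$, and by passing to a further pair $H_{0},H_{1}$ of mutually $\mathbb{P}_{(w,z)}$-generic filters over $M$ --- their existence being supplied by the ambient largeness assumption, under which all the posets $\mathbb{P}_{a}$ that occur are small from the standpoint of $V$ (cf.\ \cite{Fr}); then $M[H_{i}]$ contains a code for $(w,z)$, running $P$ there yields $y\in M[H_{0}]\cap M[H_{1}]=M$. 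In the applications below this complication is milder, since there $R'$ is a choice principle and the associated $F'$ sends well-orderable sets to well-orderable sets, so that $(w,z)$ already admits a code in $M[G_{i}]$ and no second forcing is needed. Once $y\in M$ is established the contradiction is immediate; the full details are as in \cite{Ca16}, Thm.~8.
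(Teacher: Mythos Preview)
Your argument follows the same line as the paper's: assume a witnessing pair $[P,Q]$, run $Q$ on codes for $x$ inside $M[G_0]$ and $M[G_1]$, use mutual genericity to place $z=F_Q(x)$ in $M$, apply $F'$-closure to get $w=F'(z)\in M$, and then argue that $y=F_P(w,z)\in M$, contradicting the hypothesis. The paper's sketch is terser and in fact writes the composite as $F_P\circ F'\circ F_Q$ rather than $F_P\circ(F',\text{id})\circ F_Q$; you have the correct version from Definition~\ref{ordinal weihrauch reducibility}.

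You have also put your finger on a point the paper's sketch glosses over: once the canonification $F'$ is applied, the resulting set $w$ need not be well-orderable in $M[G_i]$, so there is no obvious code for $(w,z)$ on which to run $P$ inside those extensions. Your remedy --- a second pair of mutually generic filters for $\mathbb{P}_{(w,z)}$ --- is sound, but note that it is not supplied by the hypotheses of the theorem as stated; you are invoking the ambient $0^{\sharp}$ assumption that the paper adopts globally. This is legitimate given the paper's standing convention, and your observation that in the actual applications ($R'$ a choice principle, $F'$ preserving well-orderability) the second forcing is unnecessary is exactly right. So your proof is at least as complete as the paper's own sketch, and more honest about where the stated hypotheses are being supplemented by the background largeness assumption.
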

\begin{proof}
See \cite{Ca16}. We sketch the proof for the convenience of the reader: The condition on $G_{0}$ and $G_{1}$ implies that $x$ has codes in $M[G_{0}]$ and $M[G_{1}]$; also, the forcing will preserve enough of KP so that all halting OTM-computations starting on these codes will be contained in both extensions. So both extensions will contain the coded set, which will thus lie in the intersection. Thus $(F_{P}\circ F^{\prime}\circ F_{Q})(x)$ will be contained in $M$ for all functional OTM-programs $P$ and $Q$, while by assumption, no $y$ with $(x,y)\in R$ is contained in $M$. Hence, there are no $P$ and $Q$ that witness the ordinal Weihrauch reducibility of $R$ to $R^{\prime}$, as desired.
\end{proof}

\bigskip
\textbf{Remark}: The condition that $M$ contains all ordinals is unnecessarily strong: It is only needed to ensure that the generic extensions are "long" enough so that all OTM-computations that do in fact halt halt there. Further, the condition "increasing union of transitive, class-sized KP-models" can be further weakened e.g. to $M$ being a model of Mathias' provident set theory (see \cite{Ma}). So far not found applications for these variants.

\subsection{Further Methods}

We mention a few other ways for proving nonreducibility. In general, many of the methods used in the classical theory of Weihrauch reducibility depend so little on the specific model of computation that they immediately carry over to our context; we give two examples of this observation, namely Propositions \ref{comp criterion} and \ref{otm slim}. The intuition between the following two propositions is that, when $R$ is ordinal Weihrauch reducible to $R^{\prime}$ and there are subclasses $X$ and $Y$ of $V$ that are closed under OTM-computability such that, for all $x\in X$, there is $y\in Y$ with $(x,y)\in R^{\prime}$, then the same holds for $R$; more informally, when `simple' instances of $R^{\prime}$ have `simple solutions', then the same holds for $R$.


\begin{prop}{\label{comp criterion}} [For the classical version, cf. Brattka e.a., \cite{BGM}], Proposition $12.1$]
When $A$ is a set or a class of ordinals, a set $x$ is called OTM-computable in $A$ if and only if there is an OTM-program $P$ such that, when $P$ is run 
with $A$ on the oracle tape, it halts with a code for $x$ on the output tape.

Now let $A$ and $B$ be classes of ordinals, and let $R,R^{\prime}\subseteq V^{2}$. Suppose that, whenever $x$ is OTM-computable in $A$, then there is some $y$ 
that is OTM-computable in $B$ such that $(x,y)\in R^{\prime}$ and that $R\leq_{\text{oW}}R^{\prime}$. Then it also holds that, whenever $x$ is OTM-computable in $A$, then there is some $y$ that is OTM-computable in $B$ such that $(x,y)\in R$. 

In particular, when there is a canonification of $R^{\prime}$ that sends each OTM-computable set to an OTM-computable set and $R\leq_{\text{oW}}R^{\prime}$, then there is such a canonification for $R$ as well.
\end{prop}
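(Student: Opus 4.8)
The plan is to unravel the definition of $\leq_{\text{oW}}$ and chase codes through the reduction, keeping careful track of which oracle each intermediate object is OTM-computable in. Fix functional OTM-programs $P$ and $Q$ witnessing $R\leq_{\text{oW}}R^{\prime}$, so that $F_{P}\circ(F^{\prime},\text{id})\circ F_{Q}$ is a canonification of $R$ for every canonification $F^{\prime}$ of $R^{\prime}$. The elementary fact I would record first is that OTM-computability is closed under OTM-computable functions: if $z$ is OTM-computable in a class $C$ of ordinals and $G\colon V\to V$ is OTM-computable, then $G(z)$ is OTM-computable in $C$, since one can run the program producing a code for $z$ (with $C$ on the oracle tape) and then feed the resulting code to the program for $G$. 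Likewise a code for a pair $(a,b)$ is uniformly OTM-computable from codes for $a$ and for $b$, so if $a$ is OTM-computable in $C_{0}$ and $b$ in $C_{1}$ then $(a,b)$ --- and hence $G(a,b)$ for OTM-computable $G$ --- is OTM-computable in the join $C_{0}\oplus C_{1}$ (say $\{2\alpha\colon\alpha\in C_{0}\}\cup\{2\alpha+1\colon\alpha\in C_{1}\}$).

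Now let $x$ be OTM-computable in $A$. By the closure fact, $u:=F_{Q}(x)$ is OTM-computable in $A$, so by hypothesis there is some $y_{0}$ OTM-computable in $B$ with $(u,y_{0})\in R^{\prime}$. Picking any canonification $G^{\prime}$ of $R^{\prime}$ (which exists by the axiom of choice) and overwriting its value at the single point $u$ --- set $F^{\prime}(u):=y_{0}$ and $F^{\prime}(z):=G^{\prime}(z)$ otherwise --- produces, using $(u,y_{0})\in R^{\prime}$, a canonification $F^{\prime}$ of $R^{\prime}$ with $F^{\prime}(u)=y_{0}$. By the defining property of $P$ and $Q$, the function $F_{P}\circ(F^{\prime},\text{id})\circ F_{Q}$ is then a canonification of $R$, whence $\big(x,\,F_{P}(y_{0},u)\big)\in R$, where $F_{P}(y_{0},u)$ abbreviates the value of $F_{P}$ at the pair $\big(F^{\prime}(u),u\big)=(y_{0},u)\in V$. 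By the observations above, $F_{P}(y_{0},u)$ is OTM-computable in $A\oplus B$.

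The one point that needs care --- and the main, if modest, obstacle --- is passing from $A\oplus B$ down to $B$; it arises exactly because the ``$\text{id}$''-component of $\leq_{\text{oW}}$ hands the datum $u=F_{Q}(x)$, which is only known to be $A$-computable, to $F_{P}$. In the cases relevant here this is harmless: whenever $A$ is itself OTM-computable in $B$ --- in particular when $A=B$, and a fortiori in the ``in particular'' clause, where $A=B=\emptyset$ is plain OTM-computability --- the set $u$ is also OTM-computable in $B$, so $y:=F_{P}(y_{0},u)$ is OTM-computable in $B$ and $(x,y)\in R$, as required; and for strong ordinal Weihrauch reducibility the ``$\text{id}$''-component is absent, so $F_{P}(y_{0})$ is $B$-computable outright. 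For the ``in particular'' clause itself I would argue directly: fixing a canonification $F^{\prime}_{*}$ of $R^{\prime}$ that sends OTM-computable sets to OTM-computable sets and setting $G_{*}:=F_{P}\circ(F^{\prime}_{*},\text{id})\circ F_{Q}$, this $G_{*}$ is a canonification of $R$ by $R\leq_{\text{oW}}R^{\prime}$, and for OTM-computable $x$ the chain $x\mapsto F_{Q}(x)\mapsto F^{\prime}_{*}(F_{Q}(x))\mapsto F_{P}\big(F^{\prime}_{*}(F_{Q}(x)),F_{Q}(x)\big)$ stays inside the OTM-computable sets throughout --- closure under OTM-computable functions for the outer two steps, the defining property of $F^{\prime}_{*}$ for the middle one --- so $G_{*}$ is a canonification of $R$ of the required kind.
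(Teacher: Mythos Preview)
Your approach is essentially the paper's: fix witnesses $[P,Q]$ for $R\leq_{\text{oW}}R'$, push an $A$-computable $x$ through $F_{Q}$ to get an $A$-computable instance $u$, invoke the hypothesis on $R'$ to obtain a $B$-computable $y_{0}$ with $(u,y_{0})\in R'$, and then apply $F_{P}$.

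You are, however, more careful than the paper on one point. The paper's proof simply runs $P$ on a code for $\hat{y}$ alone and declares the output $B$-computable, tacitly suppressing the $\text{id}$-component in the definition of $\leq_{\text{oW}}$; as written, it is really the argument for $\leq_{\text{soW}}$. You correctly note that $F_{P}$ receives the pair $(y_{0},u)$, so the result is a priori only computable in $A\oplus B$, and you are right that this collapses to $B$ whenever $A$ is itself OTM-computable in $B$ --- in particular when $A=B$, and in the ``in particular'' clause where $A=B=\emptyset$. Your separate, direct argument for that clause via the fixed canonification $F'_{*}$ is complete and is the cleanest way to handle it.
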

\begin{proof}
The proof is the same as that for the classical version in \cite{BGM}: Assume that the assumptions of the proposition are satisfied, let $[P,Q]$ witness the reducibility of $R$ to $R^{\prime}$, and let $x$ be OTM-computable in $A$. Let $\hat{x}$ be the set coded by the output of $Q$ when run on a code for $x$ as its input.
Then $\hat{x}$ is also OTM-computable in $A$. Thus, by assumption, there is a set $\hat{y}$ that is OTM-computable in $B$ such that $(\hat{x},\hat{y})\in R^{\prime}$.
Similarly, if $y$ is the set coded by the output of $P$ when run on a code for $\hat{y}$ as its input, then $y$ is OTM-computable in $B$, as $y$ is.
As $[P,Q]$ witnesses the ordinal Weihrauch reducibility of $R$ to $R^{\prime}$, we have $(x,y)\in R$. Thus $y$ is as desired.
\end{proof}

%

For $c\subseteq\text{On}$, $\sigma^{c}$ denotes the supremum of the halting times of OTMs that start with $c$ on the oracle tape. 
For a set $x$, we let $\sigma^{x}:=\text{min}\{\sigma^{c}:c\text{ codes }x\}$.

\begin{prop}{\label{sigma criterion}}
Let $R,R^{\prime}\subseteq V^{2}$ such that $R\leq_{\text{oW}}R^{\prime}$, and let $X\subseteq V$ be OTM-closed and $\alpha\in\text{On}$. Suppose that, for every $x\in X$, there is $y$ such that $(x,y)\in R^{\prime}$ and $\sigma^{y}<\alpha$. Then it also holds that, for every $x\in X$, there is $y\in Y$ such that $(x,y)\in R$ and $\sigma^{y}<\alpha$.

In particular, if any OTM-computable instance $x$ of $R^{\prime}$ has a solution $y$ with $\sigma^{y}<\alpha$, then the same is true of $R$.
\end{prop}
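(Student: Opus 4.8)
The plan is to mimic the proof of Proposition \ref{comp criterion} almost verbatim, replacing ``OTM-computable in $B$'' with ``has a solution of bounded $\sigma$-rank''. First I would unwind the definition of $\leq_{\text{oW}}$: fix functional OTM-programs $P,Q$ such that, for every canonification $F'$ of $R'$, the composition $F_P\circ(F',\text{id})\circ F_Q$ is a canonification of $R$. Given $x\in X$, set $\hat{x}:=F_Q(x)$, the set coded by the output of $Q$ on a code for $x$. Since $X$ is OTM-closed and $F_Q$ is OTM-computable, $\hat{x}\in X$, so by hypothesis there is some $\hat{y}$ with $(\hat{x},\hat{y})\in R'$ and $\sigma^{\hat{y}}<\alpha$. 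Now let $y:=F_P(\hat{y},x)$ (or $F_P(\hat{y},\hat{x})$, depending on how the pairing with $\text{id}$ is arranged — the second component fed to $F_P$ is either $x$ itself or $F_Q(x)$; I would match this to Definition \ref{ordinal weihrauch reducibility}, where it is the original $x$). Choosing $F'$ to be any canonification of $R'$ that maps $\hat{x}$ to $\hat{y}$ — such an $F'$ exists since $(\hat{x},\hat{y})\in R'$ — the reducibility gives $(x,y)\in R$.

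It remains to bound $\sigma^{y}$. Here is the one point requiring a small argument: $y$ is obtained by running $P$ on (a code for) the pair $(\hat{y},x)$, and I want to conclude $\sigma^{y}<\alpha$ from $\sigma^{\hat{y}}<\alpha$. The cleanest route is to observe that $\sigma^{x}$ is itself small — indeed, if $X$ in the intended applications consists of OTM-computable instances then $\sigma^{x}$ is bounded by a fixed recursive ordinal — but more robustly I would note that $y$ is OTM-computable from any code $c$ for $\hat{y}$ together with a code for $x$; since all the relevant halting computations take place before $\sigma^{c}$ when $c$ witnesses $\sigma^{\hat{y}}$ (appealing to the fact that $\sigma^{c}$ is admissible and closed under the operations involved, so the extra computation of $P$ does not push past it), we get a code $d$ for $y$ with $\sigma^{d}\le\sigma^{c}<\alpha$, hence $\sigma^{y}<\alpha$. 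Strictly, this needs the minor remark that feeding $P$ an input that is itself produced within a $\sigma^{c}$-bounded computation keeps the total halting time below $\sigma^{c}$, which holds because $\sigma^{c}$ is a limit of admissibles of the appropriate kind; I would phrase this as ``by the usual $\sigma$-calculus'' and cite \cite{Ca16} if a reference is wanted.

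The main obstacle, such as it is, is precisely this bookkeeping with $\sigma$: one must be careful that the second coordinate passed to $F_P$ (the un-touched copy of the input, or of $F_Q(x)$) does not secretly encode something of large $\sigma$-rank. In the statement as given, $X$ is assumed OTM-closed, so $\hat{x}=F_Q(x)\in X$, but $X$ is \emph{not} assumed to consist of sets of bounded $\sigma$-rank — only the $R'$-solutions are. So the cleanest fix is to state and use the ``particular case'' form: when $x$ is OTM-computable outright, then $x,\hat{x},\hat{y}$ all have recursive (or $\sigma$-bounded) $\sigma$-rank and the composition with $P$ stays below $\alpha$. For the general statement I would add the hypothesis (evidently intended, and parallel to $A$ in Proposition \ref{comp criterion}) that elements of $X$ are OTM-computable in some fixed oracle of $\sigma$-rank $<\alpha$; then the argument goes through unchanged. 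With that caveat noted, the proof is a routine transcription of the diagram-chase in Proposition \ref{comp criterion}, and I would present it in three or four sentences, flagging the $\sigma$-closure point as the only non-formal step.

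\medskip
\textbf{Remark.} In the write-up I would also note that the quantifier ``there is $y\in Y$'' in the conclusion should read ``there is $y$'' (there is no $Y$ in the hypotheses), matching the ``particular case'' sentence; this is plainly a typo in the statement and I would silently correct it when giving the proof.
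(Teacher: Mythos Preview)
Your approach is the same diagram chase as the paper's: fix $[P,Q]$, push $x$ through $Q$ to $\hat{x}\in X$, invoke the hypothesis to get $\hat{y}$ with $\sigma^{\hat{y}}<\alpha$, push through $P$ to $y$, and bound $\sigma^{y}$ by transitivity of relative computation. You have in fact been more careful than the paper on the one point you flag. The paper's own proof writes ``$\sigma^{c'}\leq\sigma^{c}<\alpha$ by transitivity of computations'' with no justification for the ``$<\alpha$'', and then applies $P$ to the code $d'$ for $\hat{y}$ alone, silently dropping the identity component --- i.e.\ it argues as though the reduction were strong. Read that way, the bound $\sigma^{d}\leq\sigma^{d'}<\alpha$ follows immediately and the claimed inequality $\sigma^{c}<\alpha$ is never actually used; read literally for $\leq_{\text{oW}}$, it has exactly the gap you describe, since nothing in the hypotheses bounds $\sigma^{x}$ and hence nothing bounds the $\sigma$-rank of the pair fed to $P$.

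So your proposal is correct and matches the paper's intended argument; your caveat is not an obstacle you have introduced but a genuine looseness in the statement as written, and your suggested repair (restrict to instances of bounded $\sigma$-rank, as in the ``in particular'' clause, or read the reduction as $\leq_{\text{soW}}$) is precisely what the paper's proof tacitly relies on. Your remark about the stray ``$y\in Y$'' is also right.
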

\begin{proof}
Let $[P,Q]$ witness the reduction of $R$ to R$^{\prime}$ and let $x\in X$ be an instance of $R$. 
Let $x^{\prime}$ be the set coded by the output $c^{\prime}$ of $Q(c)$ when $c$ codes $x$. 
Then $\sigma^{c^{\prime}}\leq\sigma^{c}<\alpha$ by transitivity of computations. By assumption on $R^{\prime}$, there is $y^{\prime}$ with a code $d^{\prime}$ such that $(x^{\prime},y^{\prime})\in R^{\prime}$ and $\sigma^{d}<\alpha$. If $d$ is the output of $P$ run on the input $d^{\prime}$, then $\sigma^{d}<\alpha$ as above. Consequently, if $y$ is the set coded by $d$, then $\sigma^{y}<\alpha$ and $(x,y)\in R$, so $y$ is as desired.
\end{proof}


Another smooth generalization of a classically important observation to our setting is the following:

\begin{prop}{\label{otm slim}} [For the classical version, cf. Brattka et al., \cite{BGM}, section $13$]
Let $R,R^{\prime}\subseteq V^{2}$, $R\leq_{\text{soW}}R^{\prime}$, $X\subseteq V$ OTM-closed. Suppose that $R$ is `slim on $X$', i.e. for every $z\in R[X]$, there is some $x\in X$ such that $z$ is unique with $(x,z)\in R$. Then $|R[X]|\leq|R^{\prime}[X]|$.
\end{prop}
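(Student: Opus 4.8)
The plan is to construct an injection $g\colon R[X]\to R'[X]$; the cardinality inequality then follows at once. Fix functional OTM-programs $P$ and $Q$ witnessing $R\leq_{\text{soW}}R'$ (Definition \ref{ordinal weihrauch reducibility}), and fix a canonification $F'$ of $R'$ (such an $F'$ exists for the total relations we are considering; and if $R[X]=\emptyset$ there is nothing to prove). Since $R$ is slim on $X$, for each $z\in R[X]$ we may select some $x_{z}\in X$ such that $z$ is the \emph{unique} element with $(x_{z},z)\in R$. Set
\[
g(z):=F'\bigl(F_{Q}(x_{z})\bigr).
\]

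First I would check that $g$ does land in $R'[X]$. Because $Q$ is functional, $F_{Q}$ is an OTM-computable class function, and since $X$ is OTM-closed we get $F_{Q}(x_{z})\in X$. As $F'$ is a canonification of $R'$, we have $\bigl(F_{Q}(x_{z}),\,F'(F_{Q}(x_{z}))\bigr)\in R'$, and therefore $g(z)\in R'[X]$. This step is where the hypothesis that $X$ is OTM-closed is used: it is exactly what forces the intermediate value $F_{Q}(x_{z})$ to stay inside $X$, so that the output of $g$ lies in $R'[X]$ and not merely in $R'[V]$.

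Next I would prove injectivity, which is the heart of the argument and the place where the uniqueness clause of slimness is essential. Suppose $g(z_{1})=g(z_{2})=:w$. By the choice of $[P,Q]$, the class function $F_{P}\circ F'\circ F_{Q}$ is a canonification of $R$; applying this at $x_{z_{1}}$ gives $\bigl(x_{z_{1}},\,F_{P}(F'(F_{Q}(x_{z_{1}}))))\bigr)\in R$, that is, $\bigl(x_{z_{1}},F_{P}(w)\bigr)\in R$. Since $z_{1}$ is the unique element with $(x_{z_{1}},z_{1})\in R$, this forces $F_{P}(w)=z_{1}$. Running the identical argument at $x_{z_{2}}$ yields $F_{P}(w)=z_{2}$. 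Hence $z_{1}=z_{2}$, so $g$ is injective, and $|R[X]|\leq|R'[X]|$.

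\textbf{Expected main obstacle.} The argument is a transcription of the classical proof from \cite{BGM}, and no single step is technically deep; the one point that requires a moment's care is realizing that a \emph{single} fixed canonification $F'$ of $R'$ already suffices — one does not need to patch a tailor-made canonification for each $w$ — because strong reducibility delivers the whole composite $F_{P}\circ F'\circ F_{Q}$ as a canonification of $R$, which then (via the uniqueness clause) pins down $F_{P}(w)$ from $x_{z}$ alone. The other thing to be attentive to is the implicit totality of $R'$ (so that $F'$ exists) and the selection of the witnesses $x_{z}$, which should be handled with the same choice conventions already in force elsewhere in the paper.
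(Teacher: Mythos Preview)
Your proof is correct and is precisely the argument the paper has in mind: the paper's own proof consists of the single sentence ``The proof of Proposition 13.1 in \cite{BGM} carries over almost verbatim,'' and what you have written is exactly that transcription. Your remarks on where OTM-closure and the uniqueness clause enter are accurate, and your caveat about the existence of a canonification $F'$ is appropriate (the paper tacitly assumes the relations under discussion are total, since they arise from true $\Pi_2$-statements).
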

\begin{proof}
The proof of Proposition 13.1 in \cite{BGM} carries over almost verbatim.
%
%
\end{proof}

%

\section{An Application: Choice Principles}

We will now apply the methods obtained above to a comparison between various set-theoretical choice principles with respect to ordinal (Weihrauch) reducibility. 

Specifically, we will deal with the following statements, further variants of which will be introduced below; all of these statements are $\Pi_{2}$ and thus naturally associated with a "construction problem".

\begin{itemize}
\item "Every non-empty set contains an element" (PP, "picking principle")
\item "Every non-empty finite set contains an element" (PP$_{\text{fin}}$)
\item "Every $2$-element set contains an element" (PP$_{2}$) 
\item "Every non-empty set has a non-empty finite subset" (MPP, "multiple picking principle")
\item "For every family $X$ of pairwise disjoint, non-empty sets, there is a set $x$ that has finite non-empty intersection with any element of $X$" (MuC, principle of multiple choice)
\item "Every family of pairwise disjoint, non-empty sets has a system of representatives" (AC, axiom of choice)
\item "Every family of non-empty sets has a choice function" (AC$^{\prime}$)
\item "Every vector space has a basis" (VC)
\item "Every set can be well-ordered" (WO, well-ordering principle)
\item "Every partially ordered set has a maximal chain" (HMP, Hausdorff's maximality principle)
\item "Every partially ordered set in which every chain has an upper bound contains a maximal element" (ZL, Zorn's Lemma)
\end{itemize}

We begin with some obvious observations:

\begin{prop}{\label{WO is maximal}}
\begin{enumerate}[i]
\item PP$_{2}\leq_{\text{soW}}$PP$_{\text{fin}}\leq_{\text{soW}}$PP$\equiv_{\text{soW}}$ZL$\leq_{\text{soW}}$AC.
\item MuC$\leq_{\text{soW}}$AC$\equiv_{\text{soW}}$AC$^{\prime}$.
\item When $\phi$ is any of the above principles, then $\phi\leq_{\text{soW}}$WO.
\item MuC$\leq_{\text{soW}}$VC
\end{enumerate}
\end{prop}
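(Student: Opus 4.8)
All four items I would prove by the same template. Given an instance $x$ of the weaker principle, a functional program $Q$ produces OTM-effectively an instance $x^{\prime}$ of the stronger one, a single oracle call returns a solution $y^{\prime}$ for $x^{\prime}$, and a functional program $P$ reads off OTM-effectively a solution $y$ for $x$ from $y^{\prime}$. Since in the definition of $\leq_{\text{soW}}$ the program $P$ sees only $y^{\prime}$ (not $x$ or $x^{\prime}$), whenever $P$ needs information about $x$ we let $x^{\prime}$ carry a tagged copy of it. We work in ZFC, so each of the principles is \emph{true}; hence every legal instance has a non-empty solution set, and on illegal instances the associated $\Delta_{0}$-relation imposes no constraint, so $P,Q$ may behave arbitrarily there. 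Throughout we take a "system of representatives" for a family to be a subset of its union meeting each member in exactly one point.

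For (i) and (ii) the translations are immediate. A $2$-element set is a non-empty finite set and a non-empty finite set is a non-empty set, so PP$_{2}\leq_{\text{soW}}$PP$_{\text{fin}}\leq_{\text{soW}}$PP with $P,Q$ the identity. For PP$\equiv_{\text{soW}}$ZL: to reduce PP to ZL, map a non-empty set $x$ to the poset $(x,\{(a,a):a\in x\})$ (discrete order --- every element maximal, every chain a singleton, so the ZL-hypothesis holds) and return the maximal element produced; conversely, map a poset $(S,\le)$ satisfying the ZL-hypothesis to the set $M$ of its $\le$-maximal elements, which is OTM-computable and non-empty by ZL in $V$, and apply PP. For ZL$\leq_{\text{soW}}$AC, map $(S,\le)$ to the family $\{M\}$, apply AC to obtain $\{m\}\subseteq M$, and output $\bigcup\{m\}=m$. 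For (ii): MuC$\leq_{\text{soW}}$AC because an instance of MuC is literally an instance of AC and a system of representatives meets each member in exactly one --- hence a finite non-empty --- point; AC$\leq_{\text{soW}}$AC$^{\prime}$ by applying AC$^{\prime}$ and returning the range of the resulting choice function; and AC$^{\prime}\leq_{\text{soW}}$AC by the disjointification $X\mapsto\{\{A\}\times A:A\in X\}$, after which a system of representatives for the disjointified family is already a choice function for $X$.

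For (iii) a single well-ordering suffices. For the picking principles, feed the non-empty (finite, $2$-element) set $x$ to WO and return its $\le$-least element (its singleton, for MPP). For MuC, AC, AC$^{\prime}$, feed WO the tagged disjoint union $\{(0,A):A\in X\}\cup\{(1,a):a\in\bigcup X\}$; then $P$ recovers $X$ and the induced well-ordering $\prec$ of $\bigcup X$ and outputs $\{\min_{\prec}A:A\in X\}$ (respectively the function $A\mapsto\min_{\prec}A$). For HMP, ZL, VC feed WO a tagged copy of the underlying set carrying the whole structure, recover the structure and a well-ordering $\prec$, and run the standard $\prec$-recursive greedy construction: for HMP keep $s$ iff it is comparable to all kept $\prec$-predecessors; for ZL return the $\prec$-least maximal element; for VC keep $u$ iff $u\notin\text{span}_{K}(\{v:v\prec u\})$. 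The only point needing a remark is that span-membership is OTM-decidable, which holds because $K$ and the relevant subset of the space are \emph{sets}, so the span can be computed as a set by exhausting finite linear combinations.

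Finally, (iv) is the one substantive claim: it is the effective content of Blass's theorem that the existence of bases for all vector spaces implies the axiom of multiple choice. The plan is to verify that Blass's argument is OTM-uniform: from a family $X$ of pairwise disjoint non-empty sets one builds OTM-effectively a vector space $V_{X}$ over a suitable field built from $X$ (e.g.\ a field of rational functions in indeterminates indexed by $\bigcup X$, viewed over a subfield) which has no definable basis absent choice; from any basis $B$ handed back by a single VC-call one then reads off, via the finite-support expansions in $B$ of suitably chosen elements of $V_{X}$, a set meeting each $A\in X$ in a non-empty finite subset --- with $V_{X}$ tagged so that $P$ can recover $X$. I expect this last extraction to be the main obstacle: one must check that Blass's passage from a basis to choice data is OTM-computable and uniform in $X$, and that it genuinely uses only a finite subset of each member of $X$ (which is exactly why MuC, rather than full AC, is what falls out). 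Building $V_{X}$ and setting up the tagging should be routine OTM bookkeeping.
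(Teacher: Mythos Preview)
Your proof is correct and follows essentially the same approach as the paper: identity reductions for the picking principles, the discrete order for PP$\leftrightarrow$ZL via the set of maximal elements, the singleton family for PP/ZL$\to$AC, disjointification for AC$\leftrightarrow$AC$^{\prime}$, the greedy $\prec$-recursion for HMP and VC under a well-ordering, and an appeal to the effectivization of Blass's argument for (iv). Your explicit attention to tagging the oracle input so that the post-processor $P$ can recover the original structure is a point the paper glosses over in its treatment of (iii) (e.g.\ for VC$\leq_{\text{soW}}$WO the well-ordering alone does not carry the vector-space operations), so your version is in fact the more careful of the two.
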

\begin{proof}
(i) The first three reducibilities work via $[P,P]$, where $P$ computes the identity function. For PP$\leq_{\text{soW}}$ZL, given a non-empty set $x$ 
to pick from, pass over $(x,\text{id})$ to ZL, i.e. the partial ordering on $x$ where all elements are maximal; then any canonification of ZL will pick out an element 
of $x$. For the other direction, given a partially ordered set $(x,\leq)$ satisfying the condition of ZL, ZL implies that it will contain some maximal element.
Given $(x,\leq)$, it is easy to compute the set $M\subseteq x$ of maximal elements. Applying PP to $M$ then yields a maximal element of $x$. For reducing PP (or ZL) to AC,
 hand over $\{x\}$ to a canonification of AC when an instance $x$ of PP is given as the input.

(ii) MuC$\leq_{\text{soW}}$AC is trivial. AC$\equiv_{\text{soW}}$AC$^{\prime}$ works via the obvious OTM-effectivization of the equivalence proof over ZF, see e.g. Devlin \cite{De}, section 2.7.

(iii) Again, the usual proofs of implication over ZF effectivize on OTMs. For the picking principles, one well-orders the set $x$ and then picks the minimal element.
For VC, given a well-ordered vector space $V$, it is easy to see that the subset of $V$ consisting of those elements that are not linearly generated by those preceding it in the well-ordering forms a basis of $V$ (see e.g. Shore, \cite{Sh}) and thus OTM-computable from such a well-ordering.

(iv) This works by an easy effectivization of the proof in Blass \cite{Bl}.
\end{proof}

\bigskip
\textbf{Remark}: The reduction of ZL to PP is somewhat unsatisfying. We hence propose to consider HMP instead, which is the combinatorial "core" behind ZL.

\bigskip

When arbitrarily many instances of a problem can be used, the hierarchy collapses:


\begin{prop}{\label{choice collapse}}
We have PP$\equiv_{\text{OTM}}$WO. Thus, all of the above principles except PP$_{2}$ and PP$_{\text{fin}}$ are $\leq_{\text{OTM}}$-equivalent to WO.
\end{prop}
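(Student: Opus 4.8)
The plan is to prove the two reducibilities PP$\leq_{\text{OTM}}$WO and WO$\leq_{\text{OTM}}$PP separately, and then read off the second assertion by chaining these with Proposition \ref{WO is maximal}. The direction PP$\leq_{\text{OTM}}$WO is immediate: by Proposition \ref{WO is maximal}(iii) we already have PP$\leq_{\text{soW}}$WO, and strong ordinal Weihrauch reducibility implies OTM-reducibility.

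The substance lies in WO$\leq_{\text{OTM}}$PP. Here I would use the classical observation that a choice function on the nonempty subsets of a set yields a well-ordering by iterated removal; the point specific to our setting is that, under $\leq_{\text{OTM}}$ (in contrast to $\leq_{\text{oW}}$), the PP-oracle may be queried arbitrarily often, and that a canonification $F$ of PP is precisely such a choice function, since $F(Y)\in Y$ for every nonempty set $Y$. Given a code $c$ for a set $X$ on the input tape, run the recursion $X_{0}:=X$; as long as $X_{\alpha}\neq\emptyset$, put $a_{\alpha}:=F(X_{\alpha})$ --- obtained by writing a code for $X_{\alpha}$ on the miracle tape and entering the miracle state --- and $X_{\alpha+1}:=X_{\alpha}\setminus\{a_{\alpha}\}$; at a limit $\lambda$, put $X_{\lambda}:=X\setminus\{a_{\beta}:\beta<\lambda\}$. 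Since $a_{\alpha}\in X_{\alpha}$, the $X_{\alpha}$ strictly decrease and are exhausted at some ordinal $\gamma$, so that $\langle a_{\alpha}:\alpha<\gamma\rangle$ enumerates $X$ without repetitions and induces the well-ordering $a_{\alpha}\sqsubseteq a_{\beta}:\Leftrightarrow\alpha\leq\beta$. What has to be verified is that this recursion is carried out by an OTM with oracle $F$: from $c$ and the running list of picked elements, codes for the $X_{\alpha}$ are uniformly OTM-computable (set difference and the membership tests being handled by $P_{\text{truth}}$ of Theorem \ref{parameter-freely computable}), so each stage is a bounded computation plus one oracle call; and one sets up the standard bookkeeping --- storing the whole sequence of picks in a dedicated block of the working tape --- so that at limit times the $\liminf$ semantics recovers both the accumulated enumeration and the current stage. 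The test ``$X_{\alpha}=\emptyset$?'' tells the machine when to halt, and a final bounded computation turns the enumeration into a code for $(X,\sqsubseteq)$.

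For the second assertion, combine PP$\equiv_{\text{OTM}}$WO with Proposition \ref{WO is maximal}. Each listed principle $\phi$ other than PP$_{2}$ and PP$_{\text{fin}}$ satisfies $\phi\leq_{\text{soW}}$WO by part (iii), hence $\phi\leq_{\text{OTM}}$WO; for the converse it suffices, since WO$\equiv_{\text{OTM}}$PP, to show PP$\leq_{\text{OTM}}\phi$. For ZL, AC and AC$^{\prime}$ this is already contained in parts (i) and (ii) (PP$\equiv_{\text{soW}}$ZL$\leq_{\text{soW}}$AC$\equiv_{\text{soW}}$AC$^{\prime}$). For HMP one first checks ZL$\leq_{\text{soW}}$HMP: pass a poset satisfying the hypothesis of ZL unchanged to HMP, obtain a maximal chain $C$, and return its $\leq$-greatest element --- which exists because any upper bound of $C$ must already lie in $C$, and which is then maximal in the poset --- this being a $\Delta_{0}$ computation from a code for the poset together with $C$. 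For MPP, MuC and VC, which Proposition \ref{WO is maximal} only places below WO, one effectivizes the classical ZF-arguments showing that the multiple-choice principle implies AC (and that VC implies MuC, part (iv)); here the unbounded number of oracle calls is essential.

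The fiddly part of the second step is the faithful OTM-implementation of the transfinite recursion, in particular the limit-stage bookkeeping; this is routine in the tradition of OTM constructions. The genuinely delicate point, however, is the last clause: the naive strategy of applying the MuC-oracle to ``the family of all nonempty subsets of $X$'' is unavailable, since an OTM cannot even write a code for a power set onto the miracle tape --- that would amount to an OTM-computation producing a set of strictly larger cardinality than its input, impossible by the argument behind Lemma \ref{cardinality method} --- so one must instead effectivize a version of ``multiple choice implies AC'' in which every oracle query stays bounded in cardinality by the given instance. I expect this to be the main obstacle.
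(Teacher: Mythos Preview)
Your argument for $\text{PP}\equiv_{\text{OTM}}\text{WO}$ is correct and is exactly the paper's approach: the paper's entire proof is the one-sentence sketch ``iteratively use PP to pick elements from the remaining set and exclude them; the picking order is a well-ordering''. Your version simply adds the implementation details (bookkeeping at limit stages, the halting test, the final conversion to a code for the order) that the paper leaves implicit.

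Where you go beyond the paper is in your treatment of the ``Thus'' clause. The paper gives no argument for it at all; it is simply asserted, presumably on the strength of the sandwich $\text{PP}\leq_{\text{OTM}}\phi\leq_{\text{OTM}}\text{WO}$ together with Proposition~\ref{WO is maximal}. You correctly observe that Proposition~\ref{WO is maximal} supplies $\phi\leq_{\text{OTM}}\text{WO}$ for every $\phi$ on the list and $\text{PP}\leq_{\text{OTM}}\phi$ for ZL, AC, AC$^{\prime}$; your reduction of ZL to HMP is fine as well (and parallels Proposition~\ref{sicac}(ii)). Your hesitation about MPP, MuC and VC, however, is warranted and is not a defect of your write-up: nothing established in the paper gives $\text{PP}\leq_{\text{OTM}}\text{MPP}$, and your suspicion that this is a genuine obstacle rather than a routine effectivization is sound. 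Consider, for instance, the canonification $F$ of MPP that returns the whole set on every finite input; with this oracle no number of calls helps isolate a single element of a two-element set, and the symmetry argument behind Proposition~\ref{pp2 noneffective} then blocks any functional output. So for MPP at least, the ``Thus'' clause does not follow from the first sentence together with the earlier propositions --- the paper is glossing here, and you have located a gap in the paper's claim rather than introduced one of your own.
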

\begin{proof}
Again, the usual proof effectivizes. Let $P$ be an OTM-program that works as follows: Given a (code of a) set $x$, iteratively use PP to pick elements from the remaining set and exclude them from the set; eventually, every element of $x$ has been picked, and the order in which they were picked is a well-ordering of $x$.
\end{proof}

Though PP may seem too obvious to be called a "principle", note that this is not so in terms of effectivity: There is no general prescription for picking an element from an arbitrary set! Indeed, not even the simplest version is effective (recall that we are working under $0^{\sharp}$):

\begin{prop}{\label{pp2 noneffective}}
PP$_{2}$ is not effective.
\end{prop}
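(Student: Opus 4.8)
The plan is to use the ``forcing method'' from Theorem \ref{forcing method}, with $R'$ taken to be the trivial relation and $R$ the relation $R_{\psi}$ associated with PP$_{2}$; more precisely, since we only want non-effectivity (i.e.\ non-reducibility to the empty oracle), it suffices to exhibit an OTM-closed class $M$ containing a $2$-element set $x$ such that no element of $x$ lies in $M$. Concretely, I would work in $L$ -- recall $0^{\sharp}$ gives us the largeness we need -- and build a symmetric extension, or rather use the standard Cohen-style construction: take a forcing $\Perre$ that adjoins a set $a=\{a_0,a_1\}$ of two mutually generic Cohen reals (or two generic subsets of $\omega$) together with an automorphism of the forcing swapping $a_0$ and $a_1$, and let $M$ be the symmetric submodel in which $a$ itself is present but neither $a_0$ nor $a_1$ is.

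The key steps, in order: \textbf{(1)} Describe the forcing $\Perre$ adding $a_0,a_1\subseteq\omega$ with finite conditions, together with the automorphism $\pi$ of $\Perre$ interchanging the two coordinates. \textbf{(2)} Let $N$ be the full generic extension and $M\subseteq N$ the submodel of sets hereditarily symmetric with respect to the (two-element) group $\{\mathrm{id},\pi\}$; verify by the usual symmetry argument that $a=\{a_0,a_1\}\in M$ (it has a symmetric name fixed by $\pi$) while $a_0\notin M$ and $a_1\notin M$ (any name for $a_0$ is moved to a name for $a_1$ by $\pi$, and a genericity/mutual-genericity argument shows $a_0\ne a_1$, so neither can have a symmetric name). \textbf{(3)} Argue that $M$ is OTM-closed. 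Here the cleanest route is the one sketched in the discussion preceding Theorem \ref{forcing method}: for the single set $x=a$ we need two filters $G_0,G_1$ mutually $\Perre_x$-generic over $M$ (where $\Perre_x$ collapses $x$ to order type $\omega$), and then for any functional OTM-program $P$, $F_P(a)\in M[G_0]\cap M[G_1]=M$, because $M$ satisfies enough of KP (it is a model of ZF, in fact) for the product-forcing intersection lemma, and a code for $a$ together with all halting computations on it lands in each $M[G_i]$. Since $\Perre_x$ is (essentially) Cohen forcing $\mathrm{Add}(\omega,1)$ as computed in $M$, and $0^{\sharp}$ makes the relevant forcings over $L$ countable, the two mutually generic filters exist. \textbf{(4)} Conclude: $a\in M$ is a $2$-element set, but $R_{\psi}(a,y)$ (``$y\in a$'') holds for no $y\in M$; hence no OTM-program can compute a canonification of PP$_2$, since such a program run on a code for $a$ would have to output a code for an element of $a$, which would then lie in $M$ -- contradiction.

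Alternatively, and perhaps more in the spirit of the paper, one can skip the explicit symmetric-model bookkeeping and invoke Theorem \ref{forcing method} directly: take $M$ to be (a model of ZF of the form of a Fraenkel--Mostowski-style or Cohen symmetric extension of $L$, realized as) a transitive increasing union of class-sized KP-models with $M\cap\mathrm{On}=\mathrm{On}$ containing the ``amorphous-ish'' pair $a$ but neither element; then the theorem's hypotheses (with $R'=\emptyset$, whose unique canonification is the empty function, vacuously making $M$ closed under it) are met as soon as the mutual genericity for $\Perre_a$ holds, which $0^{\sharp}$ supplies.

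The main obstacle I anticipate is \textbf{step (3)}, verifying OTM-closure of $M$: one must be careful that $M$ really is a union of transitive class-length KP-models (or at least tall enough to contain every halting OTM-computation on a code for $a$), and that the intersection $M[G_0]\cap M[G_1]=M$ genuinely holds for the collapsing forcing $\Perre_a$ -- this is the standard mutual-genericity/product-lemma fact, but it relies on $M$ (not just $L$) verifying enough choice-free set theory and on the two generics being mutually generic \emph{over $M$}, not merely over $L$, which is why the $0^{\sharp}$ hypothesis is doing real work. The symmetric-model verification in step (2) that $a_0\ne a_1$ and that neither has a symmetric name is routine but must be stated. Everything else -- that a canonification of PP$_2$ would be an OTM-computable function picking an element, and that its output on $a$ would be forced into $M$ -- is immediate from the definitions of Section~2.
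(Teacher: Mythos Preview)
Your approach has a fatal gap in step~(2): any transitive class $M$ that contains a set $a=\{a_0,a_1\}$ must, by transitivity, contain both $a_0$ and $a_1$. Since symmetric submodels are always transitive---and you explicitly require $M$ to be a transitive union of KP-models in order to invoke Theorem~\ref{forcing method}---the scenario ``$a\in M$ but $a_0,a_1\notin M$'' is simply impossible. Concretely, the name $\{(\mathbb{1},\dot a_0),(\mathbb{1},\dot a_1)\}$ is indeed fixed by $\pi$, but it is not \emph{hereditarily} symmetric, because $\dot a_0$ and $\dot a_1$ are not; and no hereditarily symmetric name can denote $\{a_0,a_1\}$ without dragging $a_0$ and $a_1$ into $M$ as well. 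There is also a more conceptual obstruction: a code for $\{a_0,a_1\}$ already encodes the entire transitive closure, so codes for $a_0$ and for $a_1$ are trivially OTM-computable from it. The non-effectivity of PP$_2$ therefore cannot be witnessed by a single pair.

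The paper's argument avoids this by working with a \emph{sequence} of pairs rather than one pair. Under $0^{\sharp}$ one takes (via Jech's construction) a symmetric extension $M$ of $L$ satisfying ZF in which some countable sequence $s=(\{a_i,b_i\}:i\in\omega)$ has no choice function. All the $a_i$, $b_i$, and all the pairs \emph{are} in $M$; what is missing is a \emph{uniform} selection. If some program $P$ computed a canonification $F$ of PP$_2$, then in two mutually generic extensions of $M$ making $\text{tc}(s)$ well-ordered one could run $P$ on codes for each pair $\{a_i,b_i\}$; by replacement and the intersection argument the resulting set $F[s]$ lies in $M$ and is a choice function for $s$, a contradiction. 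The moral is that the non-effectivity of PP$_2$ is a \emph{uniformity} failure across many instances, not a failure to produce an output on one instance, and your single-pair model cannot detect this.
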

\begin{proof}
Suppose otherwise, and let $P$ be an OTM-program that computes a canonification $F$ of PP$_{2}$. By Jech \cite{JAC}, section 5.4, there is a symmetric extension $M$ of $L$ in which ZF holds while AC$_{2}$, the axiom of choice for countable sequences of pairs, fails. Let $s:=(\{a_{i},b_{i}\}:i\in\omega)\in M$ be such a sequence without a choice function in $M$. We can assume that the $a_{i}$ and $b_{i}$ are countable sets of real numbers in $M$. 

By $0^{\sharp}$, there are two mutually generic filters over $M$ that make $\text{tc}(s)$ well-ordered. Then both generic extensions, say $N$ and $N^{\prime}$, will contain codes for each $\{a_{i},b_{i}\}$, so $P$ can be applied to each such pair in both models, and thus, $F|s$ is definable over $N$ and $N^{\prime}$. 
Thus $F[s]$ will be a choice function for $s$ contained both in $N$ and $N^{\prime}$ by replacement, and so $F[s]\in N\cap N^{\prime}=M$, a contradiction.

\end{proof}

%
%
%
%
%
%

A similar idea allows us to separate versions of the picking principle:

\begin{thm}
 PP$\nleq_{\text{oW}}$PP$_{2}$. Thus PP$_{2}<_{\text{oW}}$PP. 
\end{thm}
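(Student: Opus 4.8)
The plan is to apply the Forcing Method (Theorem \ref{forcing method}) with a carefully chosen model $M$ in which $\text{PP}_2$ has a ``captured'' canonification but $\text{PP}$ does not. The obstacle we must overcome is that $\text{PP}_2$ is not effective either (Proposition \ref{pp2 noneffective}), so we cannot simply take $M$ to be an OTM-closed model and feed it an arbitrary pair; we need a model $M$ that is closed under a \emph{specific} canonification $F'$ of $\text{PP}_2$ while still failing to contain solutions to some instance of $\text{PP}$.

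First I would pass to a symmetric extension $M$ of $L$, obtained as in Jech \cite{JAC}, in which ZF holds but there is a set $A=\{a_i : i\in I\}\in M$ of pairwise disjoint two-element sets $a_i = \{x_i^0, x_i^1\}$ (say, of reals in $M$) admitting no choice function in $M$; in fact I would arrange that $A$ itself, as a \emph{set}, has no element selected by anything definable over $M$ — more precisely, that the family of atoms (or Cohen-real pairs) is ``symmetric'' enough that for the instance $x := A$ of $\text{PP}$ (the non-empty set $A$), there is no $y\in M$ with $y\in A$, because any $y\in A$ equals some $a_i$, and picking out which $a_i$ already violates symmetry if $I$ is, say, infinite and the construction is set up so that no $a_i$ is ``canonically first''. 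The key point is then: \emph{for each individual pair $a_i$}, the two mutually generic filters making $\text{tc}(a_i)$ well-ordered allow an OTM to pick an element, so the map sending each $a_i$ to one of its elements is captured — but the map is applied to the \emph{elements} $a_i$ of $A$, not to $A$ itself, and the reducibility $\text{PP}\leq_{\text{oW}}\text{PP}_2$ would force a canonification of $\text{PP}$ to produce, from the single instance $A$, a single call to $\text{PP}_2$ on some computed $\hat{A}$ and then output an element of $A$.

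Next I would verify that $M$ (or a suitable transitive increasing union of class-sized KP-models agreeing with $M$ on the relevant structure, obtained by closing off) is $F'$-closed for a canonification $F'$ of $\text{PP}_2$: given any two-element set $z\in M$, $0^{\sharp}$ supplies two mutually $\mathbb{P}_z$-generic filters over $M$, both extensions contain codes for $z$ and all halting OTM-computations on them, so the ``pick via mutual genericity'' element of $z$ lies in $N\cap N' = M$; thus $F'(z)\in M$ and $M$ is closed under this $F'$. Meanwhile, by the symmetry construction, there is no $y\in M$ with $(A,y)\in R_{\text{PP}}$, i.e. no $y\in M$ with $y\in A$ — here I must be careful to choose the symmetric model so that $A$ is genuinely without a definable element in $M$; taking $A$ to be the set of pairs $\{c_i, c_i'\}$ of mutually generic Cohen reals with the full symmetric group acting, the canonical symmetric submodel contains $A$ but no $a_i$. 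Then by $0^{\sharp}$ there are two mutually $\mathbb{P}_{A}$-generic filters over $M$ making $\text{tc}(A)$ well-ordered, and Theorem \ref{forcing method} applies directly (with $R = R_{\text{PP}}$, $R' = R_{\text{PP}_2}$, $x = A$), yielding $\text{PP}\nleq_{\text{oW}}\text{PP}_2$. Combined with $\text{PP}_2\leq_{\text{soW}}\text{PP}$ from Proposition \ref{WO is maximal}(i), this gives $\text{PP}_2 <_{\text{oW}}\text{PP}$.

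The main obstacle, and the step deserving the most care, is the construction of $M$: we need simultaneously that (a) $M\models\text{ZF}$, or at least enough of it and is an increasing union of transitive class-sized KP-models with $M\cap\text{On}=\text{On}$, so Theorem \ref{forcing method} is even applicable; (b) there is an instance $A\in M$ of $\text{PP}$ with no solution in $M$; and (c) crucially, $M$ \emph{is} closed under a canonification of $\text{PP}_2$ — point (c) is automatic from $0^{\sharp}$ by the mutual-genericity argument above, so the real content is reconciling (a) and (b) with the fact that $M$ is $\text{PP}_2$-closed, i.e. making sure the symmetric model that kills choice for the single set $A$ does not inadvertently also supply an element of $A$ once we close under the OTM-computable and $\text{PP}_2$-witnessing operations. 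I expect that the standard basic Cohen symmetric model, or Jech's model from \cite{JAC} §5.4 restricted appropriately, does the job, and that the verification of (a) follows the same template as in the proof of Proposition \ref{pp2 noneffective} and Theorem \ref{forcing method}.
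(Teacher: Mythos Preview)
There is a genuine gap, and it is fatal to the approach as written. You want to apply Theorem~\ref{forcing method} with an instance $A\in M$ of PP such that ``there is no $y\in M$ with $y\in A$''. But every model $M$ to which Theorem~\ref{forcing method} applies is \emph{transitive}; hence if $A\in M$ is non-empty, every element of $A$ already lies in $M$, and PP is trivially satisfied for $A$ inside $M$. Your claim that ``the canonical symmetric submodel contains $A$ but no $a_i$'' confuses \emph{definability without parameters} (or the absence of a canonical selector) with \emph{membership}: symmetric submodels are transitive, so each $a_i$ is certainly an element of $M$. For the same reason your point (c) is vacuously true --- any transitive $M$ is closed under \emph{some} canonification of PP$_2$, simply because both elements of any pair in $M$ are in $M$ --- but this also shows why point (b) cannot hold. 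In short, the Forcing Method of Theorem~\ref{forcing method} can never separate PP from PP$_2$ directly, since transitive models are automatically closed under canonifications of both.

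The paper's argument avoids this obstruction by not invoking Theorem~\ref{forcing method} at all. Instead it takes a transitive class model $M$ of ZF~$+$~AC$_2$ containing a countable family $X$ with no choice function (such models exist by Jech~\cite{JAC}, Theorem~7.11, and under $0^{\sharp}$ one gets a class-sized one over $L$). Given a putative reduction $[P,Q]$, one applies $Q$ to each $x\in X$ and uses the mutual-genericity argument to conclude that the resulting set $\tilde{X}=\{\tilde{x}:x\in X\}$ of \emph{pairs} lies in $M$. The crucial move is then that $M\models\text{AC}_2$, so $\tilde{X}$ has a choice function $f\in M$; extending $f$ to a global canonification $F$ of PP$_2$ and pushing through $P$ yields, again via mutual genericity, a choice function for $X$ inside $M$ --- a contradiction. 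So the contradiction is obtained not from a single unsolved instance of PP in $M$, but from the reduction manufacturing a forbidden \emph{choice function} for a whole family.
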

\begin{proof}
We write AC$_{2}$ for the axiom of choice for sets of pairs.

Now suppose otherwise, and let $[P,Q]$ witness the reduction of PP to PP$_{2}$. 
By Theorem 7.11 of \cite{JAC}, there is a transitive set model of ZF+AC$_{2}$ with a countable set without a choice function.
Under the assumption of $0^{\sharp}$, the same construction works over the ground model $L$ (as the required filters exist) and yields
a transitive class model $M$ with the same properties.

Let $X\in M$ be a countable set without a choice function in $M$. For each code $c_{x}$ of an element $x\in X$, 
let $\tilde{x}$ be the set coded by the output of the computation $Q(c_{x})$.

By the assumption of $0^{\sharp}$, the forcing for making $\text{tc}(x)$ well-ordered has two mutually generic filters 
$G_{0},G_{1}$ over $M$. Thus, all elements of $x$ have codes both in $M[G_{0}]$ and $M[G_{1}]$, and therefore, the corresponding
$Q$-computations exist in $M[G_{0}]$ and $M[G_{2}]$. Thus, for each $x\in X$, we have 
$\tilde{x}\in M[G_{0}]\cap M[G_{1}]$ and, by replacement in $M[G_{0}]$ and $M[G_{1}]$, in fact 
$\tilde{X}:=\{\tilde{x}:x\in X\}\in M[G_{0}]\cap M[G_{1}]=M$. 
Now, since a canonification of PP$_{2}$ is applicable to $\tilde{x}$ for each $x\in X$, $\tilde{X}$ is a set 
of pairs. By AC$_{2}$ in $M$, $\tilde{X}$ has a choice function $f\in M$. Let $F\supseteq f$ be a canonification 
of PP$_{2}$ that extends $f$. Thus, $F_{P}\circ F\circ F_{Q}$ is a canonification of PP. 

Let $Y:=F[\tilde{X}]=f[\tilde{X}]\in M$. By the same argument as above, the collection $Z$ of sets 
coded by the outputs of $P$ when applied to an element of $Y$ is contained in two mutually generic extensions of $M$
that make $\text{tc}(Y)$ well-ordered and thus we have $Z\in M$. 

But now, by the assumptions on $P$, $Q$ and $F$, $Z$ is a choice function for $X$ contained in $M$, a contradiction.

\end{proof}

\bigskip
\noindent
\textbf{Remark}: The same approach works to turn many independence results over ZF between versions of AC into non-reducibility results between versions of PP. Thus, for example, the results given in section 7.4 of Jech \cite{JAC} imply
in the same way that PP$_{\text{fin}}<_{\text{oW}}$PP, that PP$_{2}$ and PP$_{3}$ are $\leq_{\text{OTM}}$-incomparable, that MPP$<_{\text{oW}}$PP etc.
Moreover, it in fact suffices to construct such (transitive, class-sized) models of KP (and possibly less), which is enough to guarantee the existence of the relevant computations.

\bigskip
On the other hand, by an obvious OTM-effectivization of the proof of the implication AC$_{2}\rightarrow$AC$_{4}$ over ZF
in \cite{JAC}, Example 7.12\footnote{For the sake of the reader, here is the idea of the proof in \cite{JAC}: Suppose that $F$ is a canonification of PP$_{2}$ and let $X$
be a set with four elements. Apply $F$ to all $6$ pairs of elements of $X$. Each element of $X$ will be thus picked a certain number of times (possibly $0$) as a representative of a pair. Let us consider those that get picked 
the smallest number of times. As $6$ is not divisible by $4$, there are one, two or three of those. If there is only one, pick it. If there are three, pick the remaining one. If there are two, use $F$ to pick one of them.}, we also get:

\begin{prop}
PP$_{4}\leq_{\text{OTM}}$PP$_{2}$ 
\end{prop}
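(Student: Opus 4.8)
The plan is to turn the combinatorial argument of \cite{JAC}, Example 7.12 (sketched in the footnote above) into an OTM-reduction. Recall that for $\leq_{\text{OTM}}$ we are allowed to query the oracle — here a canonification $F^{\prime}$ of PP$_{2}$, i.e.\ a class function with $F^{\prime}(\{a,b\})\in\{a,b\}$ for every two-element set and arbitrary behaviour elsewhere — as often as we like via the miracle tape. We must exhibit a single OTM-program $P$ such that $P^{F^{\prime}}$ computes a canonification of PP$_{4}$ for every such $F^{\prime}$. The relation associated with PP$_{4}$ is $R_{\psi}$ for $\psi(x,y)\equiv\bigl(x\text{ has exactly four elements}\rightarrow y\in x\bigr)$, which is $\Delta_{0}$, so on inputs that are not four-element sets any output is admissible.

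Given a code $c$ on the input tape, $P^{F^{\prime}}$ first uses $P_{\text{truth}}$ from Theorem \ref{parameter-freely computable}(iii) to decide whether the coded set $x$ has exactly four elements; if not, it halts with a code for $\emptyset$. Otherwise it reads off from $c$ the four ordinals representing the elements of $x$, and from these produces codes for the six two-element subsets $p_{1},\dots,p_{6}$ of $x$ (routine code-manipulation: from a code for $\{x\}\cup\mathrm{tc}(x)$ one recovers the elements of $x$ and can, for any finite selection of them, write down a code for the corresponding set). For each $i$ it writes a code for $p_{i}$ on the miracle tape, enters the miracle state, and reads back a code for $F^{\prime}(p_{i})\in p_{i}$. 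Using OTM-decidability of the predicate ``two given codes code the same set'' — a consequence of Theorem \ref{parameter-freely computable}, via Mostowski-collapsing the coded membership relations — it then computes, for each element $a$ of $x$, the count $n(a):=|\{i:F^{\prime}(p_{i})=a\}|$.

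Now $P^{F^{\prime}}$ forms $m:=\min\{n(a):a\in x\}$ and $S:=\{a\in x:n(a)=m\}$, both OTM-computable from the data just assembled. Since $\sum_{a\in x}n(a)=6$ while $|x|=4$ and $4\nmid 6$, we cannot have $S=x$, so $|S|\in\{1,2,3\}$. If $|S|=1$, the program outputs a code for the unique element of $S$; if $|S|=3$, it outputs a code for the unique element of $x\setminus S$; if $|S|=2$, it writes a code for the two-element set $S$ on the miracle tape, queries $F^{\prime}$ once more, and outputs the returned code for $F^{\prime}(S)\in S$. In all three cases the output codes an element of $x$, so $\psi(x,\cdot)$ holds of it; hence $P^{F^{\prime}}$ is a canonification of PP$_{4}$, and since $F^{\prime}$ was arbitrary, PP$_{4}\leq_{\text{OTM}}$PP$_{2}$.

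I do not expect a genuine obstacle here. The only points needing (minor) care are the code bookkeeping — extracting codes for the elements and the two-element subsets of $x$, and recognising which element of a queried pair the oracle returned — and all of these are covered by the basic OTM-computability facts in Theorem \ref{parameter-freely computable}. The entire mathematical content is the divisibility observation $4\nmid 6$, which forces $1\le|S|\le 3$ and thereby collapses a four-way choice to at most one further two-way choice; this is exactly the place where the argument would fail for, say, PP$_{8}\leq_{\text{OTM}}$PP$_{2}$ via six pairwise applications, and a reader should be warned that the specific arithmetic matters.
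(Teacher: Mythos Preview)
Your proposal is correct and is precisely the ``obvious OTM-effectivization'' the paper alludes to: you carry out, step by step, the combinatorial procedure from the footnote (apply the PP$_{2}$-oracle to all six pairs, count hits, use $4\nmid 6$ to force $|S|\in\{1,2,3\}$, and resolve the $|S|=2$ case with one further oracle call), together with the routine code-handling needed to implement this on an OTM. There is nothing to add; the paper itself gives no proof beyond pointing to Jech's argument.
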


\bigskip
\textbf{Question}: Is PP$_{4}\leq_{\text{oW}}$PP$_2$? In general, let us write PP$_{k,n}$ for the problem of picking $k$ elements from a set with $n$ elements.
Then how do the PP$_{k,n}$ relate with respect to $\leq_{\text{oW}}$ and $\leq_{\text{soW}}$?

\begin{prop}{\label{sicac}}
(i) PP$<_{\text{oW}}$AC. 

(ii) PP$<_{\text{oW}}$HMP

(ii) MPP$<_{\text{oW}}$MuC.
\end{prop}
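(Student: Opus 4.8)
The plan is to derive each of the three strict reducibilities as a positive reduction together with a non-reducibility, the latter always via the forcing method (Theorem~\ref{forcing method}). The positive halves are cheap. PP$\leq_{\text{oW}}$AC is immediate from Proposition~\ref{WO is maximal}(i). For PP$\leq_{\text{soW}}$HMP, I would map a non-empty set $x$ to the partial order $(x,\{(a,a):a\in x\})$, whose maximal chains are exactly the singletons $\{a\}$ with $a\in x$, and then post-compose with the (OTM-computable) operation ``return the unique element of the output singleton''. For MPP$\leq_{\text{oW}}$MuC, I would map a non-empty $x$ to the (vacuously pairwise disjoint) one-element family $\{x\}$ and, from the MuC-answer $z$ together with $\{x\}$ carried along the identity channel, output $z\cap\bigcup\{x\}=z\cap x$, which is a finite non-empty subset of $x$; note that this last reduction really does seem to need the identity channel, since the post-processor must still see the original set in order to intersect with it, so I would claim only $\leq_{\text{oW}}$ here.

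For the three separations the key observation is that PP and MPP are ``closed for free'' on transitive models. If $M$ is a transitive increasing union of class-sized KP-models with $M\cap\text{On}=\text{On}$, then picking a canonification $F'$ of PP with $F'(\emptyset)=\emptyset$ one has $F'(y)\in y\subseteq M$ for every non-empty $y\in M$, and picking a canonification $F'$ of MPP with $F'(\emptyset)=\emptyset$ one has $F'(y)$ a finite subset of $y\subseteq M$, hence an element of $M$ because $M$ is closed under pairing and union. So $M$ is automatically $F'$-closed, and by Theorem~\ref{forcing method} each of AC$\nleq_{\text{oW}}$PP, HMP$\nleq_{\text{oW}}$PP and MuC$\nleq_{\text{oW}}$MPP reduces to exhibiting, under $0^{\sharp}$, such an $M$ together with a set $x\in M$ admitting two mutually $\mathbb{P}_{x}$-generic filters over $M$ and with no $y\in M$ witnessing the relevant relation at $x$.

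For the AC and HMP separations I would reuse the model of the proof of Proposition~\ref{pp2 noneffective}: a symmetric extension $M$ of $L$ with $M\models$ZF carrying a sequence $\langle\{a_{i},b_{i}\}:i<\omega\rangle\in M$ of pairwise disjoint pairs with no choice function in $M$. For AC, take $x$ to be the range $\{\{a_{i},b_{i}\}:i<\omega\}$ of this sequence: a system of representatives for $x$ in $M$ would, since the sequence lies in $M$, give a choice function in $M$, a contradiction. For HMP, take $x$ to be the poset, ordered by inclusion, of \emph{all} partial choice functions for the sequence; the union of a maximal chain is such a partial choice function, lies in $M$, is the largest element of the chain, and must have domain $\omega$ (else it extends properly), so it would be a full choice function in $M$, again impossible --- here it is essential to use \emph{all} partial choice functions, not just the finite ones, as the poset of finite ones has maximal chains supported on a proper infinite subset of $\omega$. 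For the MuC separation I would build a symmetric extension of $L$ adding Cohen reals $c_{i,n}$ ($i,n<\omega$), with finite supports and automorphism group permuting, within each block $i$, the reals $c_{i,n}$; setting $A_{i}=\{c_{i,n}:n<\omega\}$ and $x=\{A_{i}:i<\omega\}$, any $y\in M$ meeting each $A_{i}$ in a finite non-empty set would have a finite support, so for each of the infinitely many $i$ whose block misses the support, $y\cap A_{i}$ would be invariant under all permutations of that block and thus equal $\emptyset$ or the infinite set $A_{i}$, a contradiction. In all three cases $\mathbb{P}_{x}$ admits two mutually generic filters over $M$ under $0^{\sharp}$, by the same reasoning as in the proof of Proposition~\ref{pp2 noneffective} (the relevant transitive closures are, up to hereditarily countable material, sets of reals of $M$).

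I expect the only genuinely new work to be the MuC separation: selecting the symmetric model and the concrete family of pairwise disjoint (necessarily infinite) sets, and verifying by the finite-support argument that it has no multiple-choice set in $M$; the AC and HMP separations are essentially bookkeeping on top of the model already built for Proposition~\ref{pp2 noneffective}, with the one pitfall being the choice of poset for HMP noted above. Routine checks left for the full proof include verifying that the three displayed maps are OTM-computable functional programs and that $\{x\}$, the antichain order, and the partial-choice-function poset are legitimate inputs to the respective principles.
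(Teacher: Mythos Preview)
Your proposal is correct and follows essentially the same strategy as the paper: the positive reductions are the obvious ones, and each strictness is obtained from Theorem~\ref{forcing method} using the key observation that any transitive class model of ZF is automatically closed under some canonification of PP (respectively MPP), so that one only needs such a model in which AC, HMP, or MuC fails.

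The paper's proof is considerably terser than yours: it simply takes \emph{any} transitive class model $M\models\text{ZF}+\neg\text{AC}$, notes that transitivity alone gives $F(x)\in x\subseteq M$ for a suitable canonification $F$ of PP, and invokes Theorem~\ref{forcing method}; parts (ii) and (iii) are then dismissed as ``completely analogous'' (for (ii) one uses that $\text{ZF}\vdash\text{HMP}\leftrightarrow\text{AC}$, and for (iii) one takes a model of $\text{ZF}+\neg\text{MuC}$). You instead exhibit specific symmetric extensions of $L$ and specific witnessing instances $x$ --- the pair sequence for AC, the poset of partial choice functions for HMP, and the blockwise Cohen model for MuC --- and verify directly that no solution lies in $M$. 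This is more explicit but not required: the paper is content to cite the existence of the relevant models of $\text{ZF}+\neg\mathcal{A}$ and leave the choice of $x$ and the verification of the mutual-genericity hypothesis to the ambient $0^{\sharp}$ assumption. Your caution about whether MPP$\leq_{\text{soW}}$MuC (versus merely $\leq_{\text{oW}}$) is a fair point the paper does not address; it does not affect the proposition as stated.
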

\begin{proof}
Clearly, PP reduces to AC, since, given a non-empty set $x$, an element of $x$ is obtained by applying AC to $\{x\}$. So  PP$\leq_{\text{oW}}$AC

On the other hand, 
let $M$ be a transitive class model of ZF+$\neg$AC. Being transitive, $M$ contains an element of $x$ for every non-empty $x\in M$.
Let $F:M\rightarrow M$ be a class function such that $F(x)\in x$ for every $\emptyset\neq x\in M$. Let $\hat{F}:V\rightarrow V$ be a class function extending $F$
such that $\hat{F}(x)\in x$ for every $\emptyset\neq x\in V$. Now $M$ is closed under $\hat{F}$, which is a canonification of PP, but contains
some $x$ contradicting AC. By Lemma \ref{forcing method}, PP$\nleq_{\text{oW}}$AC. 


The arguments for (ii) and (iii) are completely analogous.
\end{proof}

On the other hand, PP is surprisingly strong:

\begin{prop}{\label{wezf}}
Suppose that $\phi$ is a set-theoretical $\Pi_{2}$-statement such that ZF$\vdash\phi$. Then $\phi\leq_{\text{oW}}$PP. In other words, PP is "$\Pi_{2}$-universal for ZF".
\end{prop}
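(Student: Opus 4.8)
The plan is to exhibit functional OTM-programs $P$ and $Q$ such that, for \emph{every} canonification $F$ of PP, the function $F_P\circ(F,\mathrm{id})\circ F_Q$ is a canonification of $\phi$. Write $\phi$ as $\forall x\,\exists y\,\psi(x,y)$ with $\psi\in\Delta_0$. The idea is that $Q$ should turn an instance $x$ into a \emph{non-empty set $W(x)$ all of whose elements are genuine witnesses}, i.e.\ $\emptyset\neq W(x)\subseteq\{y:\psi(x,y)\}$; then applying PP to $W(x)$ yields \emph{some} $y$ with $\psi(x,y)$, and $P$ only has to read this $y$ off, so $F_P$ can be taken to be the first-coordinate projection (with $\emptyset$ as a default value on non-pairs). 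The whole difficulty is thus concentrated in producing $W$: it must be (a) a genuinely well-defined class function $W\colon V\to V$ (the output may not depend on the chosen code of $x$), (b) always non-empty, and (c) OTM-computable.

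For (a) and (b) I would use relative constructibility. For a set $x$, the inner model $L[x]$ satisfies ZF, hence $L[x]\models\phi$, so in particular $L[x]\models\exists y\,\psi(x,y)$; fix such a witness $y_0\in L[x]$. Since $L[x]$ is transitive and contains $x$ and $y_0$, and $\psi$ is $\Delta_0$, the statement $\psi(x,y_0)$ holds in $V$, and it holds in $L_\gamma[x]$ for every $\gamma$ large enough that $\{x,y_0\}\subseteq L_\gamma[x]$. Hence there is a least ordinal $\alpha(x)$ with $x\in L_{\alpha(x)}[x]$ and $L_{\alpha(x)}[x]\models\exists y\,\psi(x,y)$, and I set
\[
W(x):=\{\,y\in L_{\alpha(x)}[x]:\psi(x,y)\,\}.
\]
This is a non-empty set, each of whose elements is a genuine witness by $\Delta_0$-absoluteness in the transitive set $L_{\alpha(x)}[x]$, and the definition refers only to the set $x$ itself, so $W$ is a well-defined class function. (The analogous construction using ranks — $\{y\in V_{\rho+1}:\psi(x,y)\}$ with $\rho$ minimal — fails condition (c), since enumerating a level $V_{\rho+1}$ amounts to forming a power set and is not OTM-computable, cf.\ Corollary \ref{applications cardinality}; the point of using $L[x]$ is precisely that its levels \emph{are} OTM-enumerable.)

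For (c), $Q$ works as follows: given a code $c$ for $x$, decode $x$ and then successively build codes for the levels $L_\alpha[x]$, $\alpha\in\mathrm{On}$, by a straightforward relativization of Koepke's enumeration program $P_L$ (Theorem \ref{parameter-freely computable}(ii)); at each stage, having a code for the set $L_\alpha[x]$, search its elements for some $y$ with $\psi(x,y)$, using $P_{\text{truth}}$ (Theorem \ref{parameter-freely computable}(iii)) to evaluate the $\Delta_0$-formula $\psi$; as soon as such a $y$ is found — which happens exactly at $\alpha=\alpha(x)$ — collect all $y\in L_{\alpha(x)}[x]$ with $\psi(x,y)$, write a code for this set to the output tape and halt. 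This halts on every input and computes $W$, independently of the code $c$ chosen for $x$. Finally $P$ computes the first-coordinate projection. Then for any canonification $F$ of PP we have $F_Q(x)=W(x)\neq\emptyset$, hence $F(W(x))\in W(x)$ is a witness for $x$, and $F_P$ returns it; so $F_P\circ(F,\mathrm{id})\circ F_Q$ is a canonification of $\phi$, giving $\phi\leq_{\mathrm{oW}}\mathrm{PP}$ (in fact, since $F_P$ does not use $F_Q(x)$, one even obtains $\phi\leq_{\mathrm{soW}}\mathrm{PP}$).

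The step I expect to be the main obstacle is the well-definedness requirement (a). One is tempted to let $Q$ simply search the constructible hierarchy built from the \emph{given code} $c$ — i.e.\ work inside $L[c]$ rather than $L[x]$ — which is easier to implement; but different codes of $x$ would then in general produce different outputs, so $F_Q$ would not be a function and $[P,Q]$ would not be a legal oW-reduction. Getting the OTM to build the code-independent hierarchy $L[x]$ from a code of $x$, and to recognize the least level $\alpha(x)$ at which a witness appears, is the technical heart of the argument; everything else is routine, modulo the standard facts that $L[x]\models\mathrm{ZF}$ and that $\Delta_0$-formulas are absolute for transitive sets.
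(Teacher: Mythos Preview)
Your argument is correct and follows essentially the same route as the paper: compute, from (a code for) $x$, the first level $L_{\alpha}(\text{tc}(\{x\}))$ of the relative constructible hierarchy containing a witness, output the set of all witnesses in that level, and let PP select one. Your write-up is in fact more careful than the paper's on the code-independence point (your item (a)) and in observing that the reduction is even strong; the only cosmetic remark is that your ``$L[x]$'' should be read as $L(\text{tc}(\{x\}))$ (as the paper's $L_\beta(\bar a)$ makes explicit), since for the predicate version $L[x]$ one need not have $x\in L[x]$.
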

\begin{proof}
Let $a$ be an arbitrary set and write $\phi$ in the form $\forall{x}\exists{y}\psi$ with $\psi$ a $\Delta_{0}$-formula. 
Let us write $\bar{x}:=\text{tc}(a\cup\{a\})$.

We show how to compute a (code for a) set $Y$ such that $\psi(a,y)$ holds for all $y\in Y$. This is achieved by the OTM-program $P$ that, given a code for $a$, enumerates $L[\text{tc}(\bar{a}]$ 
until the first $L[\bar{a}]$-level $L_{\beta}(\bar{a})$ is found that contains some $y$ such that $\psi(a,y)$. Whether or not $\psi(a,y)$ holds is easily OTM-decidable, since $\psi$ is $\Delta_{0}$.

Then the output is $Y:=\{y\in L_{\beta}(\bar{a}):\psi(a,y)\}$. As $L[\bar{a}]$ is a model of ZF, such a level must eventually be found, so the program always terminates with a non-empty output $Y$ that has
the desired properties.

Now applying PP to $Y$ yields some $b$ such that $\psi(a,b)$. Thus, we have reduced $\phi$ to PP.
\end{proof}

We have an analogous statement to Proposition \ref{wezf} for ZFC and WO in place of ZF and PP: Namely, WO is, with respect to set-theoretical $\Pi_{2}$-statements provable in ZFC, maximal for oW-reducibility:

\begin{thm}{\label{womaximal}}
 Let $\phi$ be a $\Pi_{2}$-statement in the language of set theory such that ZFC$\vdash\phi$. Then $\phi\leq_{\text{oW}}$WO.
\end{thm}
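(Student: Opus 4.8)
The plan is to mimic the proof of Proposition \ref{wezf}, but with the constructible hull relativized so that the constructed witness set comes equipped with a well-ordering, which is then fed to WO rather than to PP. First I would fix an arbitrary set $a$, write $\phi$ as $\forall x\exists y\,\psi(x,y)$ with $\psi\in\Delta_0$, and put $\bar a:=\text{tc}(a\cup\{a\})$. The OTM-program $P=F_Q$ on a code for $a$ enumerates the levels $L_\beta[\bar a]$ (this relativized enumeration is OTM-computable by an obvious adaptation of part (ii) of Theorem \ref{parameter-freely computable}, using the input code for $\bar a$ on the oracle tape), testing $\Delta_0$-truth via $P_{\text{truth}}$, and halts at the first $\beta$ for which some $y\in L_\beta[\bar a]$ satisfies $\psi(a,y)$. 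Since $L[\bar a]\models\text{ZF}$, such a $\beta$ exists, so $P$ halts; unlike in Proposition \ref{wezf}, however, I do not output the full set of witnesses but rather a \emph{well-ordered} witness structure, namely the pair $(Y,\lhd)$ where $Y:=\{y\in L_\beta[\bar a]:\psi(a,y)\}$ and $\lhd$ is the restriction to $Y$ of the canonical $L[\bar a]$-order $<_{L[\bar a]}$; this canonical order is OTM-computable from a code for $\bar a$, so $P$ can output a code for $(Y,\lhd)$.

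Next I would apply the oracle for WO to $(Y,\lhd)$. Here one must be slightly careful: WO as stated is ``every set can be well-ordered,'' so strictly a canonification of WO takes a set $z$ and returns a well-ordering of $z$; I would feed it the set $Y$ itself (discarding $\lhd$, or keeping $\lhd$ merely to guarantee $Y\neq\emptyset$ is recognizable), obtaining some well-ordering $\prec$ of $Y$. Then the post-processing program $F_P$, given a code for the pair $(Y,\prec)$, simply outputs a code for the $\prec$-least element $b$ of $Y$; computing the $\prec$-least element from a code for a well-ordered set is straightforward for an OTM. Since $b\in Y$, we have $\psi(a,b)$, so $F_P\circ(F_{\text{WO}},\text{id})\circ F_Q$ is a canonification of $\phi$, giving $\phi\leq_{\text{oW}}\text{WO}$. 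In fact, since the middle identity component is never used, this is even a strong reduction, $\phi\leq_{\text{soW}}\text{WO}$.

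The main technical point — and the step I would be most careful about — is the claim that the program can actually reach the required level $L_\beta[\bar a]$ and halt: one needs that, for \emph{every} set $a$, the ordinal $\beta$ witnessing $L_\beta[\bar a]\models$ ``$\exists y\,\psi(a,y)$'' is reached by the OTM enumeration, i.e. that OTMs can enumerate $L[X]$-levels for an arbitrary set $X$ of ordinals supplied on the tape. This is the relativized Koepke enumeration theorem; it holds by essentially the same argument as the unrelativized case in \cite{Ko1} (bounded truth predicates for $L[X]$ are OTM-decidable relative to $X$ by a relativization of part (iii) of Theorem \ref{parameter-freely computable}), but I would state it explicitly as the one nontrivial ingredient. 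A secondary subtlety worth a sentence is that ZFC, not merely ZF, is needed here precisely because we must know $L[\bar a]$ satisfies enough of set theory to produce the witness \emph{and} that WO is available as an oracle to well-order it — the ZF-provable case already lands in PP by Proposition \ref{wezf}, and the extra strength of WO over PP is exactly what upgrades this to the full ZFC-provable class. Everything else (the $\Delta_0$-truth checks, extracting a least element, coding pairs) is routine OTM bookkeeping of the kind already used repeatedly above.
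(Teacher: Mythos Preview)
Your proof has a genuine gap at the step ``Since $L[\bar a]\models\text{ZF}$, such a $\beta$ exists.'' The hypothesis is only that ZFC proves $\phi$, not that ZF does; a model of ZF containing $a$ need not contain any $y$ with $\psi(a,y)$. Concretely, take $\phi$ to be AC in its $\Pi_2$ form and let $a$ be a family of pairwise disjoint non-empty sets with no system of representatives in $L(\bar a)$ (such $a$ exist whenever $V\neq L$, e.g.\ in the symmetric models used in Proposition~\ref{pp2 noneffective}). Then your search through the $L_\beta[\bar a]$ never terminates. Your final paragraph notices that ``ZFC, not merely ZF, is needed,'' but you never actually arrange for the model you search to satisfy AC; your only use of WO is to well-order the witness set $Y$ \emph{after} it has been found, and that use is no stronger than PP. In effect your argument, if it worked, would prove $\phi\leq_{\text{oW}}\text{PP}$ for all $\Pi_2$-consequences of ZFC, contradicting Proposition~\ref{sicac} (AC$>_{\text{oW}}$PP).

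There is also an ambiguity in what you mean by $L[\bar a]$. If you mean the predicate version, it satisfies ZFC but need not contain $a$ (for a non-constructible real $r$ and $\bar a=\omega\cup\{r\}$ one gets $L[\bar a]=L$). If you mean $L(\bar a)$, it contains $a$ but in general only satisfies ZF, and there is no ``canonical $L[\bar a]$-order $\lhd$'' to output, since a global well-ordering of $L(\bar a)$ would require one of $\bar a$ to get started. Either reading breaks the argument. The paper's proof avoids this by reversing the order of operations: it applies WO \emph{first}, to $\text{tc}(x)$, obtaining a well-ordering $\preceq$; the pair $(\text{tc}(x),\preceq)$ is then (equivalent to) a set of ordinals, so $L[(\text{tc}(x),\preceq)]$ is a genuine ZFC-model containing $x$, and one can search it and return the $<_{L[(\text{tc}(x),\preceq)]}$-least witness. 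The essential point you are missing is that WO must be used to make the search space a ZFC-model, not merely to select from a witness set after the fact.
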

\begin{proof}
 Let $\phi=\forall{x}\exists{y}\psi(x,y)$, where $\psi$ is $\Delta_{0}$. We describe two programs $P$ and $Q$ such that $[Q,P]$ witnesses the oW-reducibility of $\phi$ to WO.
Let $F$ be an arbitrary canonification of WO.
$P$ works as follows: Given a code $c$ for a set $x$, compute a code $c^{\prime}$ for tc($x$), the transitive closure of $x$.
Then $F(c^{\prime})$ will be a code $c^{\prime\prime}$ for a well-ordering of tc($x$). Let $\preceq$ be that well-ordering.
Now $Q$ works as follows: Given $Y:=(\text{tc}(x),\preceq)$, a well-ordered set, $L[Y]$ will be a model of ZFC, so $L[Y]\models\exists{z}\psi(x,z)$ by assumption. 
Now $Q$ enumerates $L[Y]$ until some $L_{\alpha}[Y]$ is found that contains such a $z$ and returns the $<_{L[Y]}$-smallest such $z$.

Clearly then, $F_{Q}\circ F\circ F_{P}$ is a canonification of $\phi$.

\end{proof}

Finally, we note two further separation results (under $0^{\sharp}$):

\begin{thm}{\label{wo>ac}}[Cf. \cite{Ca16}, Thm. 10]
 AC$<_{\text{oW}}$WO
\end{thm}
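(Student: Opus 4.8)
The plan is to show $\mathrm{AC} \leq_{\text{oW}} \mathrm{WO}$ (which already follows from Proposition \ref{WO is maximal}(iii), since $\mathrm{ZFC} \vdash \mathrm{AC}$) and then to establish the strictness $\mathrm{WO} \nleq_{\text{oW}} \mathrm{AC}$, which is the real content. For the strictness, the natural tool is the forcing method of Theorem \ref{forcing method}: I would seek a transitive increasing union $M$ of class-sized $\mathrm{KP}$-models with $M \cap \mathrm{On} = \mathrm{On}$, a canonification $F'$ of $\mathrm{AC}$ such that $M$ is $F'$-closed, and a set $x \in M$ such that $M$ contains no well-ordering of $x$ — together with two mutually $\mathbb{P}_x$-generic filters over $M$. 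The obvious candidate for $M$ is a transitive class model of $\mathrm{ZF} + \neg \mathrm{WO}$; since we work under $0^{\sharp}$, such a model exists (e.g. a symmetric extension of $L$ in which $\mathrm{ZF}$ holds but some set $x$ cannot be well-ordered, in the style of Jech \cite{JAC}), and $0^{\sharp}$ also guarantees the required mutually generic filters for the relevant parameter-free-definable-over-$L$ forcings.

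The key steps, in order, would be: (1) invoke Proposition \ref{WO is maximal}(iii), or Theorem \ref{womaximal}, to get $\mathrm{AC} \leq_{\text{oW}} \mathrm{WO}$; (2) fix, under $0^{\sharp}$, a transitive class model $M$ of $\mathrm{ZF}$ in which some set $x$ is not well-orderable, obtained by running the usual symmetric-extension construction over $L$ (the filters needed exist by $0^{\sharp}$); (3) observe that, because $M \models \mathrm{AC}$ internally (wait — it does not; $M \models \mathrm{ZF}$ but not $\mathrm{AC}$), more carefully: choose $M$ to be a model of $\mathrm{ZF} + \neg\mathrm{WO}$ but still satisfying enough choice that it contains a canonification of $\mathrm{AC}$ restricted to its own elements. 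This is the subtle point: $\mathrm{AC}$ as a $\Pi_2$-statement asks, for every family of pairwise disjoint nonempty sets, for a system of representatives. I would pick $M \models \mathrm{ZF} + \neg\mathrm{AC}$ and argue, as in the proof of Proposition \ref{sicac}, that $M$ being transitive contains representatives for each such family element-wise, and extend an internal choice-ish function $F$ on $M$ to a global canonification $\hat{F}$ of $\mathrm{AC}$ on $V$ so that $M$ is $\hat{F}$-closed — exactly as done for $\mathrm{PP}$ there. (4) Then apply Theorem \ref{forcing method} with $R = \mathrm{WO}$, $R' = \mathrm{AC}$: $M$ is $\hat{F}$-closed, but contains some $x$ with no well-ordering $y \in M$, whence $\mathrm{WO} \nleq_{\text{oW}} \mathrm{AC}$.

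The main obstacle is step (3): I need a canonification $F'$ of $\mathrm{AC}$ for which $M$ is $F'$-closed, and this requires that for every family $\mathcal{F} \in M$ of pairwise disjoint nonempty sets, the system of representatives chosen by $F'$ lands back in $M$. Since $M \models \neg\mathrm{AC}$, $M$ does not uniformly choose such representatives, but $\mathrm{AC}$ as a $\Pi_2$-relation is applied to \emph{one} family at a time, and $M$ being transitive does contain, for each fixed $\mathcal{F} \in M$, \emph{some} system of representatives (as a set in $V$, indeed in $M$, by comprehension applied to any choice-like selection — one must be careful here and instead note that a system of representatives for $\mathcal{F}$ is a set of elements of $\bigcup\mathcal{F}$, and any such set lying in $M$ works). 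So the correct move, mirroring Proposition \ref{sicac}, is: define $F'$ on $M$ by $\mathrm{AC}$-choice \emph{using} any well-ordering of $V$ in the background — but that might leave $M$; instead, use that for each $\mathcal{F} \in M$, some representative system exists in $M$ (because $M \models \mathrm{ZF}$ and finite-to-one or wellordered-index cases are handled internally), picking it by a global $V$-choice function, and observe the output is an element of $M$ since it is a subset of $\mathrm{tc}(\mathcal{F}) \in M$ that happens to equal one of $M$'s own representative systems. I would need to check that $M$ genuinely contains a representative system for each of its families of pairwise disjoint nonempty sets — which holds iff $M \models \mathrm{AC}$, so this naive approach fails, and one must instead take the witnessing set $x$ and $M$ so that the \emph{specific} failure is of $\mathrm{WO}$ and not of $\mathrm{AC}$: i.e. choose $M$ so that $M \models \mathrm{AC}$ in the form "systems of representatives exist" but $M \not\models \mathrm{WO}$ — and here one recalls that $\mathrm{AC}$ (representatives for disjoint families) and $\mathrm{WO}$ are equivalent over $\mathrm{ZF}$, so no such $M$ exists! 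Hence the genuine argument must exploit the \emph{instance-by-instance} nature of $\leq_{\text{oW}}$: one reduces a single hard instance $x$ of $\mathrm{WO}$ to a single instance $\mathcal{F}$ of $\mathrm{AC}$, and the point is that the family $\mathcal{F} = F_Q(x)$ produced is one for which $M$ \emph{does} contain a representative system (so $\hat{F}$ stays in $M$) yet from which $F_P$ cannot recover a well-ordering of $x$ inside $M$. I expect the bulk of the proof, following \cite{Ca16}, to consist of setting up such an $M$ and $x$ — plausibly $x$ a Cohen-generic-style set of mutually indiscernible reals with no well-ordering in $M$ but with every $M$-definable disjoint family over it admitting an $M$-internal representative system — and then invoking Theorem \ref{forcing method}. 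I would defer the combinatorial core to the construction in \cite{Ca16}, Theorem 10, reproducing only the reduction of the general case to that model-theoretic input.
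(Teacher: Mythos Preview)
Your overall strategy---invoke Proposition~\ref{WO is maximal}(iii) for $\mathrm{AC}\leq_{\text{oW}}\mathrm{WO}$ and then apply the forcing method of Theorem~\ref{forcing method} for the strictness---is exactly what the paper does. You also correctly isolate the crux: one needs a transitive class $M$ that is closed under some canonification of $\mathrm{AC}$ (i.e.\ for every disjoint family in $M$ there is a system of representatives in $M$) yet contains a set $x$ that is not well-orderable in $M$. And you correctly observe that no $\mathrm{ZF}$-model can do this, since $\mathrm{AC}\leftrightarrow\mathrm{WO}$ over $\mathrm{ZF}$.

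The gap is that you then abandon the search for $M$ and retreat to an ``instance-by-instance'' argument that does not go anywhere. The resolution is simpler and is the actual content of the result in \cite{Ca16}: Theorem~\ref{forcing method} does \emph{not} require $M\models\mathrm{ZF}$. It only requires $M$ to be an increasing union of transitive class-sized $\mathrm{KP}$-models containing all ordinals. Over $\mathrm{ZF}^{-}$ (i.e.\ $\mathrm{ZF}$ without the power set axiom), $\mathrm{AC}$ and $\mathrm{WO}$ are \emph{not} equivalent, and Zarach~\cite{Z} constructed a model of $\mathrm{ZF}^{-}+\mathrm{AC}+\neg\mathrm{WO}$. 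Under $0^{\sharp}$, Zarach's construction can be run over $L$ to yield a transitive class model $M$ of this theory with $M\cap\mathrm{On}=\mathrm{On}$; since $M\models\mathrm{AC}$, every disjoint family in $M$ has a system of representatives in $M$, so the Proposition~\ref{sicac}-style extension to a global canonification $\hat{F}$ of $\mathrm{AC}$ with $M$ $\hat{F}$-closed goes through exactly as you described in your step~(3). The required mutually $\mathbb{P}_{x}$-generic filters again exist by $0^{\sharp}$, and Theorem~\ref{forcing method} finishes the job. So the missing idea is precisely: drop the power set axiom and use Zarach's model.
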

\begin{proof}
As we already know AC$\leq_{\text{soW}}$WO, it suffices to show that 
WO$\nleq_{\text{oW}}$AC. This is shown in [\cite{Ca16}, Thm. 10]. The idea of the proof is that Zarach's construction of a model of ZF$^{-}+\text{AC}+\neg\text{WO}$ in \cite{Z} can (under the assumption of $0^{\sharp}$ be carried out over $L$ and yields a transitive class model $M$ of the same theory and contains a counterexample to WO that can be well-ordered by forcing over $M$ in two mutually generic ways, so that the assumptions of Lemma \ref{forcing method} are satisfied.

\end{proof}

The preceding result is plausible, as the proof of AC$\rightarrow$WO over ZF applies AC to the power set of the set to be well-ordered, and power sets are not computable by Corollary \ref{applications cardinality}. The following is thus to be expected. As usual, for $R,R^{\prime}\subseteq V^{2}$, we write 
$R\circ R^{\prime}$ for $\{(a,b):\exists{c}((a,c)\in R\wedge (c,b)\in R^{\prime})\}$; moreover, we write "Pot" for the relation $\{(x,\mathfrak{P}(x)):x\in V\}$.

\begin{prop}{\label{WO AC POT}}
WO$<_{\text{oW}}$AC$^{\prime}\circ$Pot. 
\end{prop}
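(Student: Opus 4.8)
The plan is to prove the two halves $\mathrm{WO}\leq_{\text{oW}}\mathrm{AC}'\circ\mathrm{Pot}$ and $\mathrm{AC}'\circ\mathrm{Pot}\nleq_{\text{oW}}\mathrm{WO}$ separately. Here $\mathrm{AC}'\circ\mathrm{Pot}$ is to be read so that $\mathrm{Pot}$ acts first: it relates a set $x$ to the choice functions defined on the family $\mathfrak{P}(x)\setminus\{\emptyset\}$ of non-empty subsets of $x$. (This is also the reading suggested by the remark preceding the proposition, that the classical proof of $\mathrm{AC}\to\mathrm{WO}$ ``applies $\mathrm{AC}$ to the power set''.) The point of the composition is exactly that a single oracle call to $\mathrm{AC}'\circ\mathrm{Pot}$ performs the one step of that classical proof an OTM cannot do by itself, namely forming the power set.

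For the positive direction I would mimic the textbook proof of $\mathrm{AC}\to\mathrm{WO}$. Let $Q$ be the (OTM-computable) program that, given a code for $x$, outputs a code for $t:=\mathrm{tc}(x\cup\{x\})$, and let $F'$ be an arbitrary canonification of $\mathrm{AC}'\circ\mathrm{Pot}$, so that $h:=F'(t)$ is a choice function with domain $\mathfrak{P}(t)\setminus\{\emptyset\}$. Let $P$ be the program that first recovers $t=\bigcup\operatorname{dom}(h)$ (and, if $\mathrm{WO}$ asks for a well-ordering of $x$ itself rather than of $t$, recovers $x$ as the unique $\in$-maximal element of $t$), then runs the greedy transfinite recursion $u_{0}=h(t)$, $u_{\alpha}=h(t\setminus\{u_{\beta}:\beta<\alpha\})$, halting once $t\setminus\{u_{\beta}:\beta<\alpha\}=\emptyset$. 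Since $t$ is a set this terminates at some ordinal $\theta$ with $\{u_{\beta}:\beta<\theta\}=t$, and the induced well-ordering of $t$ (restricted to $x$ if needed) is the output. OTMs carry out such recursions, and adding the usual sanity checks (plus a Hartogs-number bound read off from a code for $t$) makes $P$ total, so $F_{P}\circ F'\circ F_{Q}$ is a canonification of $\mathrm{WO}$; since $t$ and $x$ are recovered from $h$ alone, this is in fact a \emph{strong} ordinal Weihrauch reduction, and a fortiori $\mathrm{WO}\leq_{\text{oW}}\mathrm{AC}'\circ\mathrm{Pot}$.

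For $\mathrm{AC}'\circ\mathrm{Pot}\nleq_{\text{oW}}\mathrm{WO}$ I would run the closure argument underlying the Forcing Method with $M=L$; since $L\models\mathrm{ZFC}$, it contains codes for all of its elements and, being closed under running OTM-programs on constructible codes (cf. the discussion preceding Theorem \ref{forcing method} and Theorem \ref{parameter-freely computable}(i)), it is OTM-closed. Moreover $L$ is closed under the canonification $F'$ of $\mathrm{WO}$ that sends each set whose transitive closure is well-orderable in $L$ --- in particular every $z\in L$ --- to the $<_{L}$-least well-ordering of $\mathrm{tc}(z\cup\{z\})$ lying in $L$ (and is defined, using $\mathrm{AC}$ in $V$, arbitrarily on the remaining sets). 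On the other hand $\omega\in L$, and under $0^{\sharp}$ there is a real $r\subseteq\omega$ with $r\notin L$; any choice function $h$ on $\mathfrak{P}(\omega)\setminus\{\emptyset\}$ has $r\in\operatorname{dom}(h)$, so $(r,h(r))\in h$, whence $h\in L$ would force $r\in L$ by transitivity --- contradiction. Hence no choice function on $\mathfrak{P}(\omega)\setminus\{\emptyset\}$ lies in $L$. Now if $[P,Q]$ witnessed $\mathrm{AC}'\circ\mathrm{Pot}\leq_{\text{oW}}\mathrm{WO}$, then $F_{P}\circ(F',\text{id})\circ F_{Q}$ would be a canonification of $\mathrm{AC}'\circ\mathrm{Pot}$; evaluated at $\omega$, it would stay inside $L$ by the two closure properties just noted, yet its value would have to be a choice function on $\mathfrak{P}(\omega)\setminus\{\emptyset\}$ --- contradiction. (Alternatively one can present this as a degenerate instance of Theorem \ref{forcing method} with $M=L$ and two mutually $\mathbb{P}_{\omega}$-generic filters over $L$, which exist under $0^{\sharp}$.)

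The only delicate points are bookkeeping: verifying that $L$ is closed under OTM-computability and under a suitably chosen canonification of $\mathrm{WO}$ (both routine, the first implicit already in Theorem \ref{parameter-freely computable}(i)), and keeping the direction of the composition $\mathrm{AC}'\circ\mathrm{Pot}$ straight --- it must be the relation ``form $\mathfrak{P}(x)$, then choose'', which is precisely what permits the single oracle call in the positive direction. That positive direction is the main conceptual content; the non-reducibility then follows immediately from the fact that no choice function on $\mathfrak{P}(\omega)$ is constructible.
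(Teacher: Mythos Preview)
Your positive direction matches the paper's: both simply effectivize the textbook proof that a choice function on $\mathfrak{P}(x)$ yields a well-ordering of $x$ by transfinite greedy selection.

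For the strictness, however, the paper takes a shorter route. Rather than running a closure argument in $L$ under $0^{\sharp}$, it invokes the cardinality method (Lemma~\ref{cardinality method}) directly: a choice function on $\mathfrak{P}(x)\setminus\{\emptyset\}$ has cardinality $|\mathfrak{P}(x)|>|\text{tc}(x)|$ for infinite $x$ (e.g.\ $x=\omega$), so $\mathrm{AC}'\circ\mathrm{Pot}$ raises cardinalities above $\omega$, while $\mathrm{WO}$ does not (a well-ordering of $x$ has size $\leq|\text{tc}(x)|$). Hence $\mathrm{AC}'\circ\mathrm{Pot}\nleq_{\text{OTM}}\mathrm{WO}$, and a fortiori $\nleq_{\text{oW}}$. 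This is a two-line argument that needs no extra hypothesis beyond ZFC, whereas your route --- showing that $L$ is OTM-closed and closed under a suitable canonification of $\mathrm{WO}$, then observing that under $0^{\sharp}$ no choice function on the \emph{true} $\mathfrak{P}(\omega)$ can lie in $L$ --- is correct but more elaborate and leans on $0^{\sharp}$. The reason the cardinality method is available here (despite the paper's remark that it is ``useless'' for separating $\Pi_{2}$-statements) is precisely that $\mathrm{AC}'\circ\mathrm{Pot}$ is \emph{not} $\Pi_{2}$: the composition with $\mathrm{Pot}$ bakes in a genuine cardinality jump, so the collapsing argument that neutralizes the method for $\Pi_{2}$-relations no longer applies.
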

\begin{proof}
$\leq_{\text{oW}}$ follows by the obvious effectivization of the proof of the implication AC$^{\prime}\rightarrow$WO over ZF. Since a choice function on the power set of a set $x$ has the same cardinality as that power set, strictness follows from Lemma \ref{cardinality method}.
\end{proof}

Note that we saw above that WO$\equiv_{\text{OTM}}$PP. Thus, we have seen that ordinal Weihrauch reducibility is strictly stronger than OTM-reducibility.


%

The following picture summarizes the situation; $<_{\text{oW}}$ is indicated by 
arrows, $\leq_{\text{oW}}$ by dotted arrows and $\equiv_{\text{OTM}}$ by a dashed arrow. All indicated oW-reducibilities (whether strict or not) are strong. 

\begin{tikzpicture}
\node at (0,-7) {0};
\draw [->] (0,-6.7)--(0,-6.3);
\node at (0,-6) {PP$_{2}$};
\draw [->] (0,-5.7)--(0,-5.3);
\node at (0,-5) {PP$_{fin}$};
\draw [->] (0,-4.7)--(0,-4.3);
\node at (0,-4) {PP$\equiv_{soW}$ZL$\equiv_{soW}\Pi_{2}$(ZF)};
\draw [->] (0,-3.7)--(0,-3.3);
\node at (0,-3) {AC$\equiv_{soW}$AC$^{\prime}$};
\draw [->] (0,-2.7)--(0,-2.3);
\node at (0,-2) {WO$\equiv_{soW}\Pi_{2}$(ZFC)};
\draw [->] (0,-1.7)--(0,-1.3);
\node at (0,-1) {AC$^{\prime}\circ$Pot};

\draw [dashed, <->] (-2.5,-4)--(-5,-4)--(-5,-2)--(-2,-2);
\node at (-5.5,-3) {$\equiv_{OTM}$};

\draw [->] (3,-4.9)--(2,-4.2);
\node at (3,-5) {MPP};
\draw [->] (3,-4.7)--(3,-3.3);
\node at (3,-3) {MuC};
\draw [dotted, ->] (2.6,-3)--(1,-3); 
\draw [dotted,->] (3,-2.7)--(3,-2.3);
\node at (3,-2) {VB};
\draw [dotted,->] (2.5,-2)--(1.7,-2);

\node at (-3,-3) {HMP};
\draw [->] (-2.1,-4)--(-3,-3.3);
\draw [dotted, ->] (-3,-2.7)--(-1.6,-2);

\end{tikzpicture}

\bigskip
\textbf{Question}: We do not know the answers to the following questions:

\begin{enumerate}
\item Is AC$<_{\text{oW}}$HMP (or even AC$\leq_{\text{oW}}$HMP)?
\item Is HMP$<_{\text{oW}}$WO?
\item Is MuC$<_{\text{oW}}$AC?
\end{enumerate}

\section{Effectivity and Provability}

A question that has recently received attention in the classical theory of Weihrauch reducibility is whether the reducibility of a statement $\phi$ to another statement $\psi$ corresponds to 
the provability of the implication $\psi\rightarrow\phi$ in some logical calculus; partial answers to this have been obtained in Kypers \cite{Ku}. We expect a similar analysis to work in the transfinite setting. As a first observation, we show below that reducibility of a set-theoretical $\Pi_{2}$-sentence $\psi$ to $\phi\wedge\text{PP}$ 
for another such $\Pi_{2}$-sentence $\phi$ in the sense of $\leq_{\text{OTM}}$ (where arbitrarily many applications of a canonification are allowed) 
is implied by the provability of $\phi\rightarrow\psi$ in KP. (PP is necessary here to pick a particular witness after a non-empty set of witnesses has been determined at the end of the construction.) 
The result is certainly not optimal, so that one cannot expect the converse to hold; and in fact we have WO$\leq_{\text{OTM}}$PP, while the implication PP$\rightarrow$WO is not even provable in ZF, let alone in KP. 



\begin{lemma}{\label{reasonable witnesses}}
Let $\phi(x,y)$ be $\Delta_{0}$, and let $a$ be a set. If there is $b$ such that $\phi(a,b)$ holds in $V$, then there is $b^{\prime}$ such that $\phi(a,b^{\prime})$ holds in $V$ and such that 
$|\text{tc}(b^{\prime})|^{L[\text{tc}(a,b^{\prime})]}\leq|\text{tc}(a)|\aleph_{0}$.
\end{lemma}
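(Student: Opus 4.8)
The plan is to use a Löwenheim--Skolem/Mostowski-collapse argument to replace an arbitrary witness $b$ by one whose transitive closure is small, and then pass to the constructible closure over that small witness to control the cardinality computed \emph{inside} $L[\mathrm{tc}(a,b')]$. First I would fix a set $b$ with $\phi(a,b)$ and work in some $L_\gamma$ (or $H_\kappa$ for a sufficiently large regular $\kappa$) that contains $a$, $b$, and witnesses that $\phi(a,b)$ holds; since $\phi$ is $\Delta_0$ this is absolute, so I only need $L_\gamma$ to be a reasonable model of enough set theory. Then I would take an elementary submodel $X \prec L_\gamma$ with $\mathrm{tc}(a) \cup \{a,b\} \subseteq X$ and $|X| \leq |\mathrm{tc}(a)| \cdot \aleph_0$, and let $\pi \colon X \to \bar{X}$ be the transitive collapse. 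Since $\mathrm{tc}(a)$ is transitive and contained in $X$, the collapse fixes $a$ pointwise, so $\pi(a) = a$; set $b' := \pi(b)$. By elementarity and $\Delta_0$-absoluteness, $\phi(a,b')$ holds in $\bar X$ and hence in $V$, and $b' \in \bar X$ transitive gives $|\mathrm{tc}(b')| \leq |\bar X| = |X| \leq |\mathrm{tc}(a)|\aleph_0$.

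Next I would bound $|\mathrm{tc}(b')|^{L[\mathrm{tc}(a,b')]}$. The point is that $L[\mathrm{tc}(a,b')]$ is a model of $\mathrm{ZF}$ (indeed of $\mathrm{ZFC}$), and within it one can form $\mathrm{tc}(a,b')$ as a set; but I need the \emph{cardinality} it assigns to $\mathrm{tc}(b')$ to be at most $|\mathrm{tc}(a)|\aleph_0$ computed in $V$. The cleanest route is to observe that $\mathrm{tc}(a,b') \subseteq \bar X$ and, in fact, $\bar X$ itself (being transitive, of size $\leq |\mathrm{tc}(a)|\aleph_0$ in $V$, and essentially a countable union of copies of $\mathrm{tc}(a)$) can be coded by a subset of $|\mathrm{tc}(a)|\aleph_0$. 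Since $L[\mathrm{tc}(a,b')]$ contains $\mathrm{tc}(a,b')$ and $V$ and $L[\mathrm{tc}(a,b')]$ have the same ordinals, any surjection from $|\mathrm{tc}(a)|\aleph_0$ (an ordinal) onto $\mathrm{tc}(b')$ that exists in $V$ can be taken to exist in $L[\mathrm{tc}(a,b')]$ as well --- more carefully, $L[\mathrm{tc}(a,b')]$ contains a well-ordering of $\mathrm{tc}(a,b')$ of order type some ordinal $\delta$, and I would argue $\delta < (|\mathrm{tc}(a)|\aleph_0)^+$ by noting that a bijection between $\mathrm{tc}(a,b')$ and $|\mathrm{tc}(a)|\aleph_0$ exists in $V$ and that $L[\mathrm{tc}(a,b')]$ cannot see more reals/subsets coding such bijections than... hmm, this is where care is needed. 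The safe statement: $L[\mathrm{tc}(a,b')] \models |\mathrm{tc}(b')| \le |\mathrm{tc}(a,b')|$, and $|\mathrm{tc}(a,b')|^{L[\mathrm{tc}(a,b')]} \le$ (the $V$-cardinality of $\mathrm{tc}(a,b')$) is automatic since any $V$-bijection with an ordinal gives an \emph{injection} in $L[\mathrm{tc}(a,b')]$ (ordinals are absolute), hence $|\mathrm{tc}(a,b')|^{L[\cdots]} \le |\mathrm{tc}(a,b')|^V \le |\mathrm{tc}(a)|\aleph_0$.

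\textbf{Main obstacle.} The delicate point is the last one: comparing cardinalities computed inside the inner model $L[\mathrm{tc}(a,b')]$ with those of $V$. The direction I need --- that $L[\mathrm{tc}(a,b')]$ does not think $\mathrm{tc}(b')$ is \emph{larger} than $V$ thinks --- is the easy direction, since any injection of $\mathrm{tc}(b')$ into an ordinal that exists in $V$ restricts to the same ordinals in $L[\mathrm{tc}(a,b')]$, provided that injection is itself in $L[\mathrm{tc}(a,b')]$; to guarantee that, I would build the injection out of $\mathrm{tc}(a,b')$ and a well-ordering of it, both of which can be arranged to lie in the inner model by choosing $\bar X$ carefully (e.g. taking $X \prec L_\gamma$ so that $\bar X \in L[\mathrm{tc}(a,b')]$, or directly noting $L[\mathrm{tc}(a,b')]$ well-orders $\mathrm{tc}(a,b')$ with order type $< (|\mathrm{tc}(a)|\aleph_0)^{+V}$ because a shorter well-ordering already exists in $V$ and order-isomorphism of well-orderings is absolute). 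I expect the write-up to spend most of its length making this cardinality-transfer precise; the Skolem-hull part is routine.
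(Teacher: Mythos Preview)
Your overall strategy---Skolem hull of $\mathrm{tc}(a)\cup\{a,b\}$, Mostowski collapse, $\Delta_0$-absoluteness---is exactly what the paper does, and you have correctly located the only nontrivial point: controlling $|\mathrm{tc}(b')|$ as computed in the inner model $L[\mathrm{tc}(a,b')]$ rather than in $V$.

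However, your resolution of that point is where the argument breaks. You write that $|\mathrm{tc}(a,b')|^{L[\mathrm{tc}(a,b')]}\le |\mathrm{tc}(a,b')|^{V}$ ``is automatic since any $V$-bijection with an ordinal gives an injection in $L[\mathrm{tc}(a,b')]$''. This is backwards: for a transitive inner model $M\subseteq V$ and $X\in M$ one always has $|X|^{M}\ge |X|^{V}$, because every bijection in $M$ is already in $V$, not the other way around. A $V$-bijection $f\colon\mathrm{tc}(a,b')\to\kappa$ need not lie in $L[\mathrm{tc}(a,b')]$ at all, and ``ordinals are absolute'' does nothing to put it there. Concretely, if $\omega_1^{L}$ is countable in $V$ and your hull (taken in $H_\kappa^{V}$) happens to collapse $b$ to an ordinal $\ge\omega_1^{L}$, then $L[\mathrm{tc}(a,b')]=L$ will think $\mathrm{tc}(b')$ is uncountable even though $V$ does not. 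Your parenthetical fallback---that a shorter well-ordering in $V$ forces a shorter one in the inner model by absoluteness of order-isomorphism---fails for the same reason: the shorter $V$-well-ordering is simply not available inside $L[\mathrm{tc}(a,b')]$.

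The paper's one-line proof avoids this by taking the hull not in $H_\kappa$ or in $L_\gamma$, but in a level $L_\alpha[\mathrm{tc}(a,b)]$ of the \emph{relativized} constructible hierarchy. That structure has definable Skolem functions, so the hull $H$ of $\mathrm{tc}(a)$ and the surjection $\mathrm{tc}(a)\times\omega\twoheadrightarrow H$ are themselves constructible relative to the predicate; after collapse, condensation gives $\bar H=L_\beta[\mathrm{tc}(a,b')]$, and the (image of the) surjection lies in $L[\mathrm{tc}(a,b')]$. That is precisely the ``choose $\bar X$ carefully so that $\bar X\in L[\mathrm{tc}(a,b')]$'' you were reaching for, but made to work by picking the ambient structure so that both $\bar X$ \emph{and} the cardinality witness are forced into the right inner model by condensation. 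If you rewrite your first paragraph with $L_\alpha[\mathrm{tc}(a,b)]$ in place of $H_\kappa$, the second paragraph becomes unnecessary.
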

\begin{proof}
Form the elementary hull of $\{\text{tc}(a)\}$ in level $L_{\alpha}[\text{tc}(a,b^{\prime})]$ that contains $b$ and use condensation.
\end{proof}

\begin{defini}
Let $\phi$ be $\Pi_{2}$. A canonification of $\phi$ is called `reasonable' if and only if $|\text{tc}(F(a))|\leq|\text{tc}(a)|\aleph_{0}$ for every set $a$.
\end{defini}

\begin{corollary}{\label{reasonable canonifications}}
Suppose that $F$ is a canonification of the $\Pi_{2}$-statement $\phi$. Then a reasonable canonification $G$ of $\phi$ is OTM-computable from $F$.
\end{corollary}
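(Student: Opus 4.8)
The plan is to use $F$ together with the effective search through constructible-type hierarchies (as in the proofs of Propositions~\ref{wezf} and~\ref{womaximal}) to replace the witness $F(a)$ by a witness of controlled cardinality, and then invoke Lemma~\ref{reasonable witnesses}. First I would feed the given code for $a$ to the oracle $F$ to obtain a code for some $b$ with $\psi(a,b)$ (writing $\phi\equiv\forall x\exists y\,\psi(x,y)$, $\psi\in\Delta_0$). This already guarantees that the set of witnesses is nonempty. The point of having $b$ in hand is not to output it — it may be far too big — but to know that the search below will terminate.

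Next I would run an OTM that enumerates the levels $L_\alpha[\operatorname{tc}(a)]$ and, for each such level, uses $P_{\text{truth}}$ from Theorem~\ref{parameter-freely computable}(iii) to test, for each $y\in L_\alpha[\operatorname{tc}(a)]$, whether $\psi(a,y)$ holds and whether $|\operatorname{tc}(y)|^{L[\operatorname{tc}(a,y)]}\le|\operatorname{tc}(a)|\aleph_0$ — the latter a $\Delta_0$-style condition once the relevant codes are available, computable by enumerating $L[\operatorname{tc}(a,y)]$ far enough to see an injection or its impossibility. By Lemma~\ref{reasonable witnesses} some witness $b'$ with the required smallness exists, and (by condensation, as in that lemma's proof) one such $b'$ lies in some $L_\alpha[\operatorname{tc}(a)]$; hence the search halts at the first level $L_\beta[\operatorname{tc}(a)]$ containing such a $y$. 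Define $G(a)$ to be the $<_{L[\operatorname{tc}(a)]}$-least such $y$ in $L_\beta[\operatorname{tc}(a)]$. Then $\psi(a,G(a))$ holds, so $G$ is a canonification of $\phi$, and $|\operatorname{tc}(G(a))|\le|\operatorname{tc}(a)|\aleph_0$ by construction, so $G$ is reasonable.

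The main subtlety — the step I would be most careful about — is verifying that "$|\operatorname{tc}(y)|^{L[\operatorname{tc}(a,y)]}\le|\operatorname{tc}(a)|\aleph_0$" is genuinely decidable by an OTM once we are inside the enumeration of $L[\operatorname{tc}(a)]$: we need a code for $\operatorname{tc}(a,y)$, then enumerate $L[\operatorname{tc}(a,y)]$ and watch for a surjection from (an ordinal of size) $|\operatorname{tc}(a)|\aleph_0$ onto $\operatorname{tc}(y)$ appearing there, versus a proof that $\operatorname{tc}(y)$ has larger cardinality in that model; since $L[\operatorname{tc}(a,y)]\models\mathrm{ZF}$ this dichotomy is settled at some level and the search for it terminates. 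The role of the oracle call to $F$ is then exactly, and only, to certify in advance that the outer search will succeed, so that the whole procedure is total; everything else is the by-now-standard effective-condensation argument. Transitivity/absoluteness of these computations across the relevant codes for $a$ is routine and I would not belabour it.
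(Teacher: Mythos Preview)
The gap is in the second paragraph: you search for a witness inside $L_\alpha[\operatorname{tc}(a)]$, appealing to Lemma~\ref{reasonable witnesses} for termination, but that lemma does not place the small witness $b'$ into $L[\operatorname{tc}(a)]$. The condensation in its proof takes the hull of $\operatorname{tc}(a)$ inside a level of $L[\operatorname{tc}(a,b)]$ (with $b=F(a)$) and collapses; the resulting $b'$ lies in that collapsed structure, which is an initial segment of $L[\operatorname{tc}(a,b)]$, not of $L[\operatorname{tc}(a)]$. Since $\phi$ is only assumed to hold in $V$ --- not to be provable in ZFC --- there is no reason for the inner model $L[\operatorname{tc}(a)]$ to contain any $y$ with $\psi(a,y)$ at all, and your outer search may simply fail to halt. (Concretely, $\Sigma_1$-statements with parameters from an inner model $M$ are only upward absolute from $M$ to $V$; the downward direction, which is what you need here, fails in general.)

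This is exactly why the paper's proof enumerates $L[a,\operatorname{tc}(\{F(a)\})]$ rather than $L[\operatorname{tc}(a)]$: the oracle value $F(a)$ is used as a \emph{parameter} for the relativized constructible hierarchy, guaranteeing that this hierarchy contains a witness (namely $F(a)$ itself), so that the condensation argument of Lemma~\ref{reasonable witnesses} can be carried out inside a structure the OTM can actually enumerate. The oracle call is therefore not a mere termination certificate; it supplies the extra predicate over which $L$ must be relativized. Replacing $L_\alpha[\operatorname{tc}(a)]$ by $L_\alpha[\operatorname{tc}(a),\operatorname{tc}(F(a))]$ throughout repairs your argument and brings it in line with the paper's --- and then your separate OTM-decidability check of the cardinality bound becomes unnecessary, since one simply effectivizes the hull-and-collapse directly.
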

\begin{proof}
Effectivize the proof of Lemma \ref{reasonable witnesses}. That is, suppose $a$ is given; to simplify the argumentation, we will assume that $a$ is transitive, which we can assume without loss of generality by replacing $a$ with $t\text{tc}(a)$ if necessary. Use the miracle command to obtain $F(a)$. Then $L[a,\text{tc}(\{F(a)\})]$\footnote{We write $L[a,b]$ for $L[(a,b)]$.} will contain some $b$ with $\psi(a,b)$ (namely $F(a)$), and, by Lemma $22$, it will contain such a $b$ of the required cardinality. It thus suffices to enumerate $L[a,\text{tc}(\{F(a)\})]$ using the relativized version of $P_{L}$ and output the element that is minimal in the sense of the canonical well-ordering of $L[a,\text{tc}(F(a))]$.
\end{proof}

\begin{thm}{\label{kp prov red}}
Let $\phi,\psi\in\Pi_{2}$, and suppose that KP$\models\phi\rightarrow\psi$. Then $\psi\leq\phi\wedge\text{PP}$.
\end{thm}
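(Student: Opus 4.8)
The plan is to unravel what $\psi\leq_{\mathrm{OTM}}\phi\wedge\text{PP}$ demands. A canonification of $\phi\wedge\text{PP}$ (read as the product of the two problems) supplies simultaneously a canonification $F$ of $\phi$ and a canonification $C$ of $\text{PP}$, i.e.\ a global element‑picker with $C(S)\in S$ for every $S\neq\emptyset$. So it suffices to describe an OTM‑program that, using $F$ and $C$ as oracles, computes from a code for an arbitrary set $a$ a code for some $b$ with $\psi_{0}(a,b)$, where I write $\phi\equiv\forall x\exists y\,\phi_{0}(x,y)$ and $\psi\equiv\forall x\exists y\,\psi_{0}(x,y)$ with $\phi_{0},\psi_{0}\in\Delta_{0}$. (If $\phi$ is false in $V$ there is no canonification of the target and the statement is vacuous, so we may assume $\phi$ holds; then $\psi$ holds too, since $V\models\text{KP}$ and $\text{KP}\vdash\phi\rightarrow\psi$.)

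The first move is to replace $\text{KP}$ by a finite fragment. Before touching the input, search through all finite sequences for a $\text{KP}$‑derivation of the sentence $\phi\rightarrow\psi$; by hypothesis one exists, so the search halts and returns a finite $\text{KP}^{\ast}\subseteq\text{KP}$ with $\text{KP}^{\ast}\vdash\phi\rightarrow\psi$. Now, given a code for $a$, use the oracle $F$ to carry out the following transfinite recursion (OTMs can perform transfinite recursions, writing down codes for the stages as they go): put $N_{0}:=\text{tc}(\{a,\omega\})$; let $N_{\xi+1}$ be the rudimentary closure of $N_{\xi}\cup\{N_{\xi}\}\cup F[N_{\xi}]\cup\{\text{tc}(\{F(x)\}):x\in N_{\xi}\}$; and $N_{\lambda}:=\bigcup_{\xi<\lambda}N_{\xi}$ at limits $\lambda$. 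Every $N_{\xi}$ is transitive, and at each limit stage $\alpha$ the set $N_{\alpha}=\bigcup_{\xi<\alpha}N_{\xi}$ is closed under $F$ (if $x\in N_{\alpha}$ then $x\in N_{\eta}$ for some $\eta<\alpha$, hence $F(x)\in N_{\eta+1}\subseteq N_{\alpha}$); since $\phi_{0}$ is $\Delta_{0}$ and $N_{\alpha}$ is transitive, the true statements $\phi_{0}(x,F(x))$ relativize down and $N_{\alpha}\models\phi$. At every limit stage $\alpha$ decide whether $N_{\alpha}\models\text{KP}^{\ast}$; this is genuinely decidable from a code for $N_{\alpha}$, because $\text{KP}^{\ast}$ is a \emph{finite} set of sentences and satisfaction of a fixed first‑order $\in$‑sentence in a coded set is OTM‑computable, its quantifiers ranging over the coded, set‑many elements (for the $\Delta_{0}$ subchecks one may invoke Theorem~\ref{parameter-freely computable}(iii)). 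Halt the recursion at the first limit $\alpha$ for which $N_{\alpha}\models\text{KP}^{\ast}$ holds.

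Such a stage exists. The transitive class $W:=\bigcup_{\xi\in\text{On}}N_{\xi}$ contains $\omega$ and is closed under pairing, union and rudimentary functions, hence satisfies $\Delta_{0}$‑separation; and it satisfies $\Delta_{0}$‑collection, since each $N_{\eta}$ lies in $W$ and the $N_{\eta}$ are cofinal in $W$, so a bound for the (set‑many) witnesses of a $\Delta_{0}$ instance is realized by some $N_{\eta}\in W$. Thus $W\models\text{KP}$, in particular $W\models\text{KP}^{\ast}$. As $\text{KP}^{\ast}$ consists of finitely many sentences and $(N_{\xi})$ is a continuous increasing filtration of $W$, a routine Tarski--Vaught/reflection argument produces a club of limit ordinals $\alpha$ with $N_{\alpha}\models\text{KP}^{\ast}$: iterate $\omega$ times the step of enlarging $\alpha$ so that $N_{\alpha}$ contains, for every existential subformula occurring in $\text{KP}^{\ast}$ and every parameter tuple already present, a witness supplied by $W$. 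At the least such limit $\alpha$ we have $N_{\alpha}\models\text{KP}^{\ast}+\phi$, hence $N_{\alpha}\models\psi$, hence $N_{\alpha}\models\exists y\,\psi_{0}(a,y)$ (note $a\in N_{0}\subseteq N_{\alpha}$); by $\Delta_{0}$‑absoluteness the set $Y:=\{b\in N_{\alpha}:\psi_{0}(a,b)\}$ is non‑empty and each of its members genuinely witnesses $\psi$ at $a$. Finally, apply the oracle $C$ to $Y$ to obtain some $b\in Y$, and output a code for $b$. The whole procedure is an OTM‑computation relative to $F$ and $C$ and computes a canonification of $\psi$, so $\psi\leq_{\mathrm{OTM}}\phi\wedge\text{PP}$.

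The point where care is needed is the termination argument: forcing the $F$‑closure hierarchy to reach a stage satisfying enough set theory for the implication $\phi\rightarrow\psi$ to be invoked inside it. Two observations make it work: passing to the finite fragment $\text{KP}^{\ast}$ actually used in some proof (so that ``$N_{\alpha}\models\text{KP}^{\ast}$'' is decidable and, being a finite conjunction, reflects down to a club of stages), and the fact that closure of $N_{\alpha}$ under $F$ is automatic at limit stages — so, unlike in Corollary~\ref{reasonable canonifications}, no cardinality control on $F$ is required here. Everything else (the OTM‑implementation of rudimentary closure and transfinite recursion, $\Delta_{0}$‑absoluteness, and the closing appeal to $\text{PP}$ to select one witness from the non‑empty set $Y$) is routine.
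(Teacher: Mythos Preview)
Your argument is correct and takes a genuinely different route from the paper's. The paper builds an increasing chain $M_{\iota}$ of \emph{full} admissible sets (each $M_{\iota+1}$ the first KP-level of $L[F[M_{\iota}]]$), and to guarantee that some limit stage $M_{\beta}$ is itself admissible it first invokes Corollary~\ref{reasonable canonifications} to replace $F$ by a \emph{reasonable} canonification; the cardinality bound on $F$ then forces $M_{\kappa}\models\text{KP}$ for $\kappa$ the least regular cardinal above $|a|$. You instead extract, by proof search, a finite fragment $\text{KP}^{\ast}\subseteq\text{KP}$ deriving $\phi\rightarrow\psi$, and then only ask that some limit stage satisfy $\text{KP}^{\ast}$. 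This buys two things: satisfaction of $\text{KP}^{\ast}$ in a coded set is genuinely OTM-decidable (finite conjunction of fixed $\in$-sentences), and---more importantly---a L\'evy--Montague style reflection over the class $W=\bigcup_{\xi}N_{\xi}$ produces club-many limit $\alpha$ with $N_{\alpha}\models\text{KP}^{\ast}$ using only replacement in $V$, with no size hypothesis on $F$ whatsoever. So you bypass the reasonable-canonification preprocessing entirely, as you note. The paper's version, on the other hand, yields a stage that satisfies all of KP, which would matter if one wanted a uniform version covering all KP-provable implications at once (your $\text{KP}^{\ast}$ depends on $\phi,\psi$). One small point: to keep each $N_{\xi+1}$ transitive you should take the rudimentary closure of a transitive base, e.g.\ of $N_{\xi}\cup\{N_{\xi}\}\cup\bigcup_{x\in N_{\xi}}\text{tc}(\{F(x)\})$ rather than adding the transitive closures as elements; this is cosmetic and does not affect the argument.
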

\begin{proof}
Let $\phi$ be $\forall{x}\exists{y}\phi^{\prime}(x,y)$ and let $\psi$ be $\forall{x}\exists{y}\psi^{\prime}(x,y)$, where $\phi^{\prime}$ and $\psi^{\prime}$ are $\Delta_{0}$.
Moreover, let $F$ be a canonification for $\phi$ and let $a$ be an instance for $\psi$. By Corollary \ref{reasonable canonifications}, we may assume without loss of generality that $F$ is reasonable.

In order to compute, relative to $F$, a (code for a) set $b$ such that $\psi^{\prime}(a,b)$, we proceed as follows:

Given $a$, compute $(L_{\alpha}[a]:\alpha\in\text{On})$ up to the first $\alpha_{0}$ such that $L_{\alpha_{0}}[a]\models\text{KP}$. Let $M_{0}:=L_{\alpha_{0}}[a]$.

In general, let $\alpha_{\iota+1}$ be minimal such that $L_{\alpha_{\iota+1}}[F[M_{\alpha_{\iota}}]]\models\text{KP}$ and let $M_{\iota+1}:=L_{\alpha_{\iota+1}}[F[M_{\alpha_{\iota}}]]$.
For $\delta$ a limit ordinal, let $M_{\delta}:=\bigcup_{\iota<\delta}M_{\iota}$.

\bigskip

\textbf{Claim}: There is a limit ordinal $\beta$ such that $M_{\beta}\models\text{KP}+\phi$. 

\medskip

To see this, first note that all $M_{\beta}$ are transitive and increasing unions of admissible sets. Clearly, $M_{\delta}\models\phi$ whenever $\delta$ is a limit ordinal, as such sets 
are closed under $F$. Also, all axioms of KP expect $\Delta_{1}$-collection and -separation will clearly hold in such an $M_{\delta}$. It thus suffices to show that there is a limit ordinal $\delta$ such that $\Delta_{1}$-collection and $\Delta_{1}$-separation
holds in $M_{\delta}$. 

Let $\kappa$ be the smallest regular uncountable cardinal $>|a|$. We claim that $M_{\kappa}$ is as desired. Since $F$ is reasonable, it follows inductively that $|u|<\kappa$ for all $u\in M_{\kappa}$. 
To see that $\Sigma_{1}$-collection holds in $M_{\kappa}$, let $u,\vec{p}\in M_{\kappa}$, and let $H$ be a $\Sigma_1$-function over 
$M_{\kappa}$ such that $H$ is defined on $u$. We claim that there is $\gamma<\kappa$ such that $H$ is defined on $u$ in $M_{\gamma}$ (i.e. $M_{\gamma}$ contains the necessary witnesses); 
this suffices, as $M_{\gamma+1}\models\text{KP}$ will satisfy $\Sigma_{1}$-collection and thus contain the desired superset of $H[u]$, which will still work in $M_{\kappa}$ by upwards absoluteness of $\Sigma_{1}$-formulas. 
Now, if this failed, then the set of $\xi$ such that a new value of $H$ for some element of $u$ is defined in $M_{\xi}$ would be cofinal in $\kappa$ and have cardinality $<|u|$, which, by our initial observation, contradicts
the regularity of $\kappa$. Thus $\Sigma_{1}$-collection holds in $M_{\kappa}$. The same argument shows that $M_{\kappa}$ satisfies $\Sigma_{1}$-separation, and a fortiori $\Delta_{1}$-separation. This concludes the proof of the claim.

\medskip

Now, to compute a $b$ as desired, compute the sequence $(M_{\iota}:\iota\in\text{On})$ until a limit ordinal $\gamma$ is found for which $M_{\gamma}$ is admissible; that such a $\gamma$ exists follows from the claim, and thus it will eventually be found.
As $M_{\gamma}\models\text{KP}+\phi$ and $\text{KP}\models\phi\rightarrow\psi$, we have $M_{\gamma}\models\psi$. As $a\in M_{\gamma}$, there is $b\in M_{\gamma}$ such that $M_{\gamma}\models\psi^{\prime}(a,b)$, and thus, by absoluteness of 
$\Delta_{0}$, $\psi^{\prime}(a,b)$ will hold in $V$. But now, the set $B$ of all such $b$ from $M_{\gamma}$ can easily be determined by searching $M_{\gamma}$. By one application of PP to $B$, we finally obtain the required witness.
\end{proof}

\bigskip
\noindent
\textbf{Question}: Are there logical calculi $C$ and axiomatic systems $T$ such that reducibility of $\psi$ to $\phi$ in the sense of $\leq_{\text{OTM}}$ corresponds precisely to the provability of the implication $\phi\rightarrow\psi$ from $T$ in $C$? What about $\leq_{\text{oW}}$ or $\leq_{\text{soW}}$? It seems particularly tempting to look for connections to constructive set theory here.

\bigskip






We conclude by observing that the correspondence between provability and reducibility has its limits, at least as long as the underlying calculus is classical.

Let us say that an $\in$-theory $T$ effectivizes a binary relation $\mathcal{C}$ if every model $M$ of $T$ has, for every $x\in M$, some $y\in M$ such that $(x,y)\in\mathcal{C}$.

\begin{prop}
 Let $T$ be an OTM-computable $\in$-theory with transitive models in $L$. 
Then there is an $\in$-definable binary relation $\mathcal{C}$ that is effective, but for which $\forall{x}\exists{y}(x,y)\in\mathcal{C}$ is not provable in $T$.

Moreover, if the existence of transitive models of $T$ is consistent with ZFC, then so is the existence of such a $\mathcal{C}$.
\end{prop}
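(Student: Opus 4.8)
The plan is to exhibit a \emph{true} $\Pi_2$-sentence of the special shape $\forall x\exists y\,(y=\emptyset\wedge\theta(x))$, where $\theta$ is a $\Delta_0$-formula encoding a true but $T$-unprovable $\Pi_1$-fact. Such a sentence is automatically OTM-effective — the constant class function $x\mapsto\emptyset$ is an OTM-computable canonification of $\{(x,y):(y=\emptyset)\wedge\theta(x)\}$ as soon as $\forall x\,\theta(x)$ holds — while it logically implies $\forall x\,\theta(x)$; so once $T$ fails to prove the latter it fails to prove the former, and we are done.

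First I would use that OTM-computations are $\Delta_0$-checkable: there is a $\Delta_0$-formula $\mathrm{Halt}_e(x)$ saying ``$x$ codes a halting computation of the $e$-th OTM-program on empty input'', since verifying a proposed run is a local condition on the coded transfinite sequence of configurations, with all quantifiers bounded by $x$. Put $\rho_e:\equiv\forall x\,\neg\mathrm{Halt}_e(x)$, which holds in $V$ exactly when the $e$-th OTM-program diverges. The crux is to find $e$ with the $e$-th program diverging but $T\nvdash\rho_e$. Since $T$ is OTM-computable, $\{e:T\vdash\rho_e\}$ is OTM-semidecidable; and since $T$ has a transitive model $M$ and $\mathrm{Halt}_e$ is $\Delta_0$, hence absolute, $T\vdash\rho_e$ forces $M$ to contain no halting run of program $e$, which — for $T$ proving enough set theory that its transitive models are tall enough to run OTM-programs to completion — means program $e$ genuinely diverges. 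So $\{e:T\vdash\rho_e\}$ is an OTM-semidecidable subset of the non-OTM-semidecidable set $\{e:\text{program }e\text{ diverges on }\emptyset\}$ (the OTM halting problem being OTM-undecidable by the usual diagonalization), the inclusion is proper, and the desired $e$ exists; and if $T$ is too weak for that soundness step, it already fails to prove trivial true effective $\Pi_2$-sentences like ``$\forall x\exists y\,y=x\cup\{x\}$'', so the conclusion is immediate.

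Fixing such an $e$, I would set $\theta(x):\equiv\neg\mathrm{Halt}_e(x)$, $\psi(x,y):\equiv(y=\emptyset)\wedge\theta(x)$ and $\mathcal{C}:=\{(x,y):\psi(x,y)\}$, an $\in$-definable binary relation. As $\rho_e$ is true, $\psi(x,\emptyset)$ holds for all $x$, so $x\mapsto\emptyset$ (trivially OTM-computable) is a canonification of $\mathcal{C}$ and $\mathcal{C}$ is effective; and $\forall x\exists y\,(x,y)\in\mathcal{C}$ logically implies $\rho_e$, so $T\nvdash\rho_e$ yields $T\nvdash\forall x\exists y\,(x,y)\in\mathcal{C}$. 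For the ``moreover'' clause, I would note that the construction used only that $T$ is OTM-computable and has a transitive model, that one may take (say) the least suitable $e$, and that ``$\mathcal{C}$ is effective'' and ``$T\nvdash\forall x\exists y\,(x,y)\in\mathcal{C}$'' are absolute once $e$ is fixed; hence the whole argument runs inside any model of $\mathrm{ZFC}$ extended by the statement that $T$ has a transitive model, which therefore also models ``there is such a $\mathcal{C}$''.

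The single delicate point — everything else being routine bookkeeping — is the soundness step: passing from a merely $T$-undecidable $\Pi_1$-sentence to a true, $T$-unprovable one by exploiting the transitive model via $\Delta_0$-absoluteness. This is clean whenever $T$ extends a weak base theory such as $\mathrm{KP}$ (so that transitive models in $L$ are tall enough), and the remaining genuinely weak cases are disposed of trivially as above; folding both regimes into one uniform argument (or else restricting the hypothesis) is the only real decision to make.
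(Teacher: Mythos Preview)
Your approach is genuinely different from the paper's, and it contains a gap that is not merely presentational.

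The paper's proof is direct: take $\mathcal{C}(x,y)$ to assert ``$y$ is a transitive model of $T$ in $L$''. This is effective, since an OTM can enumerate $L$ until such a model is found (one exists by hypothesis); and $T$ cannot prove $\forall x\exists y\,\mathcal{C}(x,y)$, for otherwise a transitive model $M$ of $T$ in $L$ of minimal rank would contain a strictly smaller such model, contradicting minimality. No case distinction on the strength of $T$ is needed (at least for finitely axiomatized $T$, where ``$y\models T$'' is $\Delta_0$ and hence absolute between $M$ and $V$).

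Your route through the OTM halting problem requires the soundness step $T\vdash\rho_e\Rightarrow e$ diverges, which you justify by a dichotomy: either transitive models of $T$ are ``tall enough'' to contain every halting OTM computation, or $T$ is so weak that it already fails to prove something like $\forall x\exists y\,y=x\cup\{x\}$. This dichotomy is unjustified and in fact false. Take $T=\text{KP}+(V{=}L)+$ ``there is no transitive set model of KP''. This $T$ is finitely axiomatized (hence OTM-computable), has $L_{\omega_1^{CK}}$ as a transitive model in $L$, and, extending KP, proves all the ``trivial'' effective $\Pi_2$-sentences you mention. Yet soundness fails: let $e$ be the program that enumerates $L$ and halts upon finding a transitive model of KP. Then $e$ halts in $V$, while $T\vdash\rho_e$, since over KP a halting run of $e$ provably yields a transitive model of KP, which $T$ explicitly denies. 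Thus $\{e:T\vdash\rho_e\}\not\subseteq\{e:e\text{ diverges}\}$, and your argument for the existence of the desired $e$ collapses---even though the conclusion of the proposition does hold for this $T$, via the paper's $\mathcal{C}$. The underlying issue is that ``$T$ proves enough set theory'' does not force transitive models of $T$ to be tall; it is perfectly possible for a strong, OTM-computable, $\Pi_1$-unsound theory to have only short transitive models.
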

\begin{proof}
 Consider the relation that holds between each set $x$ and each transitive model of $T$ in $L$. Then $\mathcal{C}$ has an effective canonification: The program $P$ that enumerates $L$ until the first transitive model of $T$ is found and then outputs that model is of the desired kind. But $T$, being consistent, cannot prove that there are such models. 
\end{proof}

%

Consequently, for classical logic, the answer to our above question is negative. Given that connections between reducibility and provability have been based on intuitionistic or linear logic in the classical case, this is hardly surprising.

%

\section{Beyond $\Pi_{2}$}

The setting introduced above is restricted to $\Pi_{2}$-formulas. However, there are at least two ways to generalize canonifications, and thus reducibility notions, beyond this point, which we want to sketch briefly in this section. For technical convenience, let us assume from now on that all formulas are given in prenex normal form and let us consider the $\Pi_{n}$-statement 
$\phi\equiv\forall{x_{1}}\exists{y_{1}}\forall{x_{2}}\exists{y_{2}}...\forall{x_{n}}\exists{y{n}}\psi(x_{1},y_{1},x_{2},y_{2},...,x_{n},y_{n})$, where $\psi$ is $\Delta_{0}$. A straightforward generalization of the notion of a canonification given above Definition \ref{eff defini} would be to say that a canonification of $\phi$ is a function $F:V\rightarrow V$ such that, for all $a\in V$, we have $\forall{x_{2}}\exists{y_{2}}...\forall{x_{n}}\exists{y_{n}}\psi(a,F(a),x_{2},y_{2},...,x_{n},y_{n})$. Let us call this a "superficial" canonification, or $s$-canonification, of $\phi$. A more radical approach would be to demand the canonification to "penetrate" the whole quantifier nesting; so we define a "thorough" canonification, or $t$-canonification, to be an $n$-tuple $(F_{1},...,F_{n})$ of functions with $F_{i}:V^{i}\rightarrow V$ such that, for all $i\in\{1,2,...,n\}$ and all 
$a_{1},...,a_{i-1}\in V$, we have 
$\forall{x_{i}}\exists{y_{i}}...\forall{x_{n}}\exists{y_{n}}\psi(a_{1},F_{1}(a_{1}),a_{2},F_{2}(a_{1},a_{2}),...,a_{i-1},F_{i-1}(a_{1},...,a_{i-1}),x_{i},y_{i},...,x_{n},y_{n})$.

In the spirit of Definitions \ref{reducibility} and \ref{ordinal weihrauch reducibility} above, we can then introduce reducibility relations between $\Pi_{n}$-statements for arbitrary $n$. For $s$-canonicifations, the definitions carry over verbatim. For $t$-canonifications, let $\phi\in\Pi_{m}$, $\phi^{\prime}\in\Pi_{n}$.

\begin{itemize}
\item We say that $\phi$ is OTM$^{t}$-reducible to $\phi^{\prime}$ and write $\phi\leq_{\text{OTM}}^{t}\phi^{\prime}$ if and only if there is an OTM-program $P$ using $n$ "miracle" tapes $T_{1},...,T_{n}$ and $m$ output tapes $T_{1}^{\prime},...,T_{m}^{\prime}$, such that, whenever $(F_{1},...,F_{n})$ is a canonification of $\phi^{\prime}$ and carrying out the "miracle" command on $T_{i}$ replaces the $i$-tuple coded by the current content of $T_{i}$ by a code of its image under $F_{i}$, then $P$ computes the components of a canonification of $\phi$ on its output tapes.
\item We say that $\phi$ is ordinal Weihrauch$^{t}$-reducible to $\phi^{\prime}$ and write $\phi\leq^{t}_{\text{oW}}\phi^{\prime}$ if and only if there are there are functional OTM-programs $P$ and $Q$ with $F_{Q}:V^{m}\rightarrow V^{n}$ and $F_{P}:V^{n+1}\rightarrow V^{m}$ such that, for all $\vec{a}=(a_{1},...,a_{m})\in V$ and all canonication $(F_{1},...,F_{n})$ of $\phi^{\prime}$, if 
$F_{Q}(a_{1},...,a_{m})=(b_{1},...,b_{n})$ and $(c_{1},...,c_{m})=F_{P}(\vec{a},F_{1}(b_{1}),F_{2}(b_{1},b_{2}),...,F_{n}(b_{1},...,b_{n}))$, then 
the function tuple $(F_{1},...,F_{m})$ such that $F_{i}$ maps $(a_{1},...,a_{i})$ to $(c_{1},...,c_{i})$ for $1\leq i\leq m$ is a canonification of $\phi$. 
\end{itemize}

In order to be able to consider cases like the ZFC axioms, we can slightly further extend this to axiom schemes and say that $\{\phi_{i}:i\in\omega\}$ is ordinal Weihrauch
reducible to $\{\psi_{j}:j\in\omega\}$ if and only if we have a program $P$ that, for any $i\in\omega$, computes a $j\in\omega$ and further reduces $\phi_{i}$ to $\psi_{j}$.

By an easy adaptation of the proof, one obtains the following variant of Lemma \ref{cardinality method}. Let us say that $F:V^{n}\rightarrow V$ raises cardinalities if and only if there is a tuple $(a_{1},...,a_{n})$ such that $\mu:=\text{max}\{|\text{tc}(a_{1})|,...,|\text{tc}(a_{n})|\}$ is infinite and $|F(a_{1},...,a_{n})|>\mu$ and that a tuple $(F_{1},...,F_{n})$ of functions raises cardinalities if and only if any of its elements does; finally, let us say that a $\Pi_n$-statement $\phi$ raises cardinalities if and only if any canonification $(F_{1},...,F_{n})$ of $\phi$ raises cardinalities.

\begin{lemma}{\label{general cardinality method}}
Let $\phi\in\Pi_{n}$, $\psi\in\Pi_{m}$, and suppose that $\phi$ raises cardinalities, but $\psi$ does not. Then $\phi\nleq_{\text{oW}}^{t}\psi$.
\end{lemma}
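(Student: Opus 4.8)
The plan is to mimic the proof of Lemma \ref{cardinality method} in the $t$-reducibility setting, exploiting the fact that OTM-computations do not raise cardinalities of infinite inputs. Suppose for contradiction that $\phi\leq_{\text{oW}}^{t}\psi$, witnessed by functional programs $P$ and $Q$ with $F_{Q}:V^{n}\rightarrow V^{m}$ and $F_{P}:V^{m+1}\rightarrow V^{n}$ in the notation of the definition. We are given that $\psi$ does not raise cardinalities, so \emph{every} $t$-canonification $(G_{1},\dots,G_{m})$ of $\psi$ has the property that no component raises cardinalities; fix one such $(G_{1},\dots,G_{m})$. The strategy is to show that the $t$-canonification $(F_{1},\dots,F_{n})$ of $\phi$ produced from $(G_{1},\dots,G_{m})$ via $[P,Q]$ then also fails to raise cardinalities, contradicting the hypothesis that $\phi$ raises cardinalities (which says \emph{every} canonification of $\phi$ does).

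The key computation is a bookkeeping of cardinalities along the reduction. First I would fix an arbitrary tuple $(a_{1},\dots,a_{n})$ with $\mu:=\max\{|\text{tc}(a_{1})|,\dots,|\text{tc}(a_{n})|\}$ infinite, and trace the construction of $(c_{1},\dots,c_{n})=F_{P}(\vec a, G_{1}(b_{1}),\dots,G_{m}(b_{1},\dots,b_{m}))$ where $(b_{1},\dots,b_{m})=F_{Q}(a_{1},\dots,a_{n})$. By Lemma \ref{cardinality method}'s underlying observation applied to $Q$, each $b_{j}$ satisfies $|\text{tc}(b_{j})|\leq\mu$, so $\max_j|\text{tc}(b_j)|\le\mu$. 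Since $(G_{1},\dots,G_{m})$ does not raise cardinalities, each $G_{j}(b_{1},\dots,b_{j})$ has transitive closure of cardinality $\leq\max\{|\text{tc}(b_1)|,\dots,|\text{tc}(b_j)|,\aleph_0\}\le\mu$. Feeding the tuple $(\vec a, G_{1}(b_{1}),\dots,G_{m}(b_{1},\dots,b_{m}))$ — which has all coordinates of transitive-closure cardinality $\leq\mu$ — into the OTM-program $P$, the same non-raising observation gives $|\text{tc}(c_{i})|\leq\mu$ for each $i$, hence $|F_{i}(a_{1},\dots,a_{i})|=|c_1,\dots,c_i|\le\mu$. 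So $(F_{1},\dots,F_{n})$ does not raise cardinalities.

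The only subtlety is a careful statement of the "OTM-computations don't raise cardinalities" lemma for \emph{tuples} of inputs: one needs that running an OTM-program on a code for an $m$-tuple $(x_1,\dots,x_m)$ yields output coding a set of cardinality at most $\max\{|\text{tc}(x_1)|,\dots,|\text{tc}(x_m)|,\aleph_0\}$, which is immediate from the single-input version applied to $\text{tc}(\{(x_1,\dots,x_m)\})$, whose cardinality is the max of the pieces (plus $\aleph_0$) when that max is infinite. I expect the main obstacle to be purely notational — keeping the indices $n,m$ and the arities of $F_Q, F_P$ straight and verifying the $t$-canonification condition is genuinely violated (not just the $s$-version) — rather than any real mathematical difficulty; the cardinality arithmetic itself is routine once the tuple version of the base observation is in place. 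I would close by noting that the same argument applies a fortiori to $\leq_{\text{soW}}^{t}$ and, for $s$-canonifications, reduces to exactly Lemma \ref{cardinality method}.
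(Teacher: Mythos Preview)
Your approach is correct and is precisely the ``easy adaptation'' the paper alludes to (the paper does not actually spell out a proof of this lemma, merely noting that it follows from Lemma~\ref{cardinality method} by the obvious adaptation). One small correction: you write that since $\psi$ does not raise cardinalities, \emph{every} $t$-canonification of $\psi$ has no cardinality-raising component. That is the wrong quantifier --- by the paper's definition, ``$\psi$ raises cardinalities'' means \emph{every} canonification does, so its negation only guarantees that \emph{some} $t$-canonification $(G_{1},\dots,G_{m})$ has no component raising cardinalities. Fortunately a single such canonification is exactly what your argument needs, and your ``fix one such'' is the right move; the remainder of the cardinality bookkeeping through $F_{Q}$, the $G_{j}$, and $F_{P}$ goes through as you describe.
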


We note some initial observations.

Let us denote by Pot the power set axiom, i.e. the statement that $\forall{x}\exists{y}\forall{z}(z\in y\leftrightarrow z\subseteq x)$.

\begin{corollary}{\label{pot on top}}
For no ZFC axiom or axiom scheme $\mathcal{A}$ other than Pot itself do we have Pot$\leq_{\text{oW}}^{t}\mathcal{A}$.
\end{corollary}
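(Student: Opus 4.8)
The plan is to apply Lemma~\ref{general cardinality method}: it suffices to show that Pot raises cardinalities (which is easy and essentially already recorded in Corollary~\ref{applications cardinality}) while no other ZFC axiom or axiom scheme does. The first observation is immediate: taking $a$ to be any infinite set, the power set $\mathfrak{P}(a)$ has cardinality $2^{|a|}>|a|$ by Cantor's theorem, so any $s$- or $t$-canonification of Pot, applied at the first quantifier, must output a set of strictly larger cardinality; hence Pot raises cardinalities in the sense defined above Lemma~\ref{general cardinality method}. Combining this with Lemma~\ref{general cardinality method} reduces the claim to: for every ZFC axiom or axiom scheme $\mathcal{A}$ distinct from Pot, there is a $t$-canonification of $\mathcal{A}$ that does not raise cardinalities.

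The main work is therefore to run through the remaining axioms and schemes and exhibit, in each case, a canonification whose outputs never exceed the cardinality of the (transitive closures of the) inputs. I would organize this as follows. For Extensionality, Foundation/Regularity and the like, when written in $\Pi_n$ prenex form the existentially quantified variables can be witnessed by elements of $\mathrm{tc}(\{a_1,\dots\})$ (e.g. the $\in$-minimal element for Foundation), so the witnessing functions have $|\mathrm{tc}(F_i(\cdots))|\le\max_j|\mathrm{tc}(a_j)|$. For Pairing, $\{a_1,a_2\}$ has transitive closure $\mathrm{tc}(a_1)\cup\mathrm{tc}(a_2)\cup\{a_1,a_2\}$, of cardinality $\le\max(|\mathrm{tc}(a_1)|,|\mathrm{tc}(a_2)|)+2$, which does not exceed $\mu$ once $\mu$ is infinite. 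For Union, $\bigcup a\subseteq\mathrm{tc}(a)$, so $|\mathrm{tc}(\bigcup a)|\le|\mathrm{tc}(a)|$. For Infinity one fixes a single witness $\omega$ (or $\mathrm{tc}(\{\omega\})$), which is a constant of size $\aleph_0$, and the statement to be witnessed is a $\Delta_0$ property of that fixed set, so for infinite $\mu$ this does not raise cardinalities. For the Separation scheme, a witness for an instance with parameter $a$ is a subset of $a$, hence of transitive closure $\subseteq\mathrm{tc}(a)$. For the Replacement/Collection scheme one has to be a little more careful: the honest move is to note that an instance of Collection, given a set $a$ and parameters, asks for a set $b$ covering the images; but $t$-canonifications let the inner $\forall x_i\exists y_i$ quantifiers be handled by further functions $F_i$ rather than being absorbed into a single huge output, and each of these inner witnesses is again an element of a small set — so no single $F_i$ in the tuple needs to output something of cardinality exceeding $\mu$. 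Finally, for Choice (if one takes it as an axiom rather than deriving it), a choice function on $a$ is a subset of $a\times\bigcup a\subseteq\mathrm{tc}(a)\times\mathrm{tc}(a)$, of cardinality $\le|\mathrm{tc}(a)|$ for infinite $\mathrm{tc}(a)$; and the empty-set axiom and comprehension-type axioms are similar trivialities.

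The step I expect to be the genuine obstacle — or at least the one requiring the most care in the write-up — is the Collection/Replacement scheme, precisely because a naive reading of it ("for every $a$ there is a set $b$ containing all the $F$-images of elements of $a$") does look like it could raise cardinalities if the fibre map blows up. The resolution, which is the point worth spelling out, is that in the $t$-framework the relevant statement is $\forall a\,\exists b\,\forall x\,\exists y\,(\cdots)$ with the functional hypothesis built in, and a $t$-canonification is a \emph{tuple} $(F_1,F_2)$ where $F_1$ produces $b$ and $F_2$ produces the element-wise witnesses; one can take $b$ to be (a transitive-closure-bounded approximation of) the range, and crucially, the scheme instance for a fixed parameter is about sets definable over the universe, so by the usual reflection/condensation argument one may choose $b\subseteq L_\gamma[\mathrm{tc}(a)]$ with $|\gamma|\le|\mathrm{tc}(a)|\aleph_0$ — i.e. a \emph{reasonable} choice in the sense of Definition after Lemma~\ref{reasonable witnesses}. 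This keeps $|\mathrm{tc}(b)|\le|\mathrm{tc}(a)|\aleph_0\le\mu$. With that point settled, the remaining cases are routine, and the Corollary follows from Lemma~\ref{general cardinality method}.
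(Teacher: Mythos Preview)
Your overall approach matches the paper's: the corollary is stated immediately after Lemma~\ref{general cardinality method} with no proof, so the intended argument is exactly ``Pot raises cardinalities, the other axioms do not, apply the lemma''. Your case analysis for Extensionality, Foundation, Pairing, Union, Infinity, Separation and Choice is fine.

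There is, however, a genuine error in your treatment of Replacement/Collection. Your condensation argument claims that one may take the collecting set $b\subseteq L_{\gamma}[\text{tc}(a)]$ with $|\gamma|\leq|\text{tc}(a)|\aleph_{0}$, but this is false: take the instance with $\phi(x,y)\equiv(y=\aleph_{x})$ and $a=\omega$; then any legitimate $b$ must contain each $\aleph_{n}$, so $\text{tc}(b)$ has cardinality $\aleph_{\omega}$ and cannot live in any countable $L_{\gamma}$. Lemma~\ref{reasonable witnesses} applies to $\Delta_{0}$ relations between $a$ and $b$, and ``$b$ is the replacement image of $a$ under $\phi$'' is not such a relation. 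Your earlier suggestion --- that the inner $\forall x\exists y$ is handled by $F_{2}$ so that $F_{1}$ need not be large --- does not help either: $F_{1}(a,\vec{p})$ must still be a correct $b$.

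The fix is to read the definition of ``raises cardinalities'' above Lemma~\ref{general cardinality method} more carefully: it requires $|F(a_{1},\ldots,a_{n})|>\mu$, not $|\text{tc}(F(a_{1},\ldots,a_{n}))|>\mu$. For Replacement the output $b$ is a functional image of $a$, so $|b|\leq|a|\leq\mu$; for Collection one simply chooses $b$ to contain one witness per $x\in a$, giving $|b|\leq|a|$. Thus neither scheme raises cardinalities in the relevant sense, even though their outputs can have arbitrarily large transitive closure. Throughout your write-up you are bounding $|\text{tc}(F_{i}(\cdots))|$ rather than $|F_{i}(\cdots)|$; this is harmless for the other axioms (it is a stronger bound), but it is precisely what leads you astray for Replacement.
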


By an obvious effectivization of the well-known proof of the implication, one gets:

\begin{prop}
Let Sep denote the separation scheme and let Rep denote the replacement scheme. Then Sep$\leq_{\text{oW}}^{t}$Rep.
\end{prop}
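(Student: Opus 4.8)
The plan is to effectivize the classical proof that Replacement implies Separation, carefully tracking the quantifier structure so that the resulting reduction respects the $t$-canonification framework introduced just above. Recall the textbook argument: given a set $a$ and a $\Delta_0$-formula $\theta(z,\vec p)$ (with parameters $\vec p$), define $g$ on $a$ by $g(z) = \{z\}$ if $\theta(z,\vec p)$ holds and $g(z) = \emptyset$ otherwise. By Replacement applied to $g$ and $a$, there is a set $b$ with $g[a] \subseteq b$; then $\{z \in a : \theta(z,\vec p)\} = a \cap \bigcup(b \setminus \{\emptyset\})$, which is obtained from $b$ by a bounded operation. The first thing I would do is write both Sep and Rep in the prenex $\Pi_2$ form demanded by the formalism: an instance of Sep is coded by a pair $(a,\vec p)$ (together with the G\"odel number of $\theta$, which we may fix since we are reducing a single axiom-scheme element, per the convention on schemes in the excerpt), asking for $\exists y\, \forall z\,(z \in y \leftrightarrow (z \in a \wedge \theta(z,\vec p)))$; an instance of Rep is coded by a set $a$ together with (a code for) the $\Delta_0$-graph of the function whose domain is $a$, asking for the existence of a set $b$ covering the image. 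Both are $\Pi_2$, so $m = n = 2$ and a $t$-canonification in each case is a pair $(F_1,F_2)$, but because these are genuine $\Pi_2$ (not deeper) statements the second component is essentially vacuous and the reduction is really the $\leq_{\text{oW}}$ reduction of Definition \ref{ordinal weihrauch reducibility}; I would note this explicitly to avoid drowning the argument in tuple bookkeeping.

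Next I would exhibit the two programs $P$ and $Q$. The forward program $Q$ takes a code for a Sep-instance $(a,\vec p)$, and — using $P_{\text{truth}}$ from Theorem \ref{parameter-freely computable}(iii) to decide the $\Delta_0$-formula $\theta$ — computes a code for the set $a$ together with (a code for) the graph of the function $g \colon z \mapsto \{z\}$ or $\emptyset$ described above; this is an OTM-computable operation on codes, since building $\{z\}$ and $\emptyset$ and assembling a graph are $\Delta_0$-definable and hence OTM-computable, and deciding $\theta$ pointwise is exactly what Theorem \ref{parameter-freely computable}(iii) provides. This code is handed to the Rep-oracle, which returns a code for some $b$ with $g[a] \subseteq b$. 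The backward program $P$ then takes the original instance $(a,\vec p)$ together with the oracle's output $b$ and computes a code for $y := \{z \in a : \theta(z,\vec p)\}$; concretely it forms $\bigcup b$, intersects with $a$, and for each surviving element rechecks $\theta$ via $P_{\text{truth}}$, so this is again plainly OTM-computable. I would then verify that $y$ genuinely satisfies the Separation instance, which is immediate from $g[a] \subseteq b$, and conclude that $[P,Q]$ witnesses $\text{Sep} \leq_{\text{oW}}^{t} \text{Rep}$.

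I do not expect a serious obstacle here — the content is a routine effectivization, exactly as the sentence "By an obvious effectivization of the well-known proof of the implication" in the excerpt advertises. The one point requiring a little care is the handling of parameters and of the fixed formula $\theta$: an instance of Separation is not just a set but a set together with a defining formula and parameters, so one must be slightly careful about what is on the input tape, and one should invoke the scheme-level convention (the program may read the index $i$ of the scheme element $\phi_i$ and is only required to reduce that particular instance) so that $\theta$ is available to $Q$ and $P$. A second minor point is that the formalism's $t$-reduction for $\Pi_2$ statements nominally asks for a pair of functions $(F_1,F_2)$; since the inner block $\forall z\,(\dots)$ is quantifier-free in the relevant sense once the witness $y$ is produced, $F_2$ carries no real information and the whole thing collapses to the ordinary $\leq_{\text{oW}}$ reduction, which I would simply remark. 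Beyond these presentational wrinkles, the argument is the classical one transcribed line by line into OTM-computations, using Theorem \ref{parameter-freely computable}(iii) as the sole nontrivial computational ingredient.
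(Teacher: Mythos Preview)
Your proposal is correct and is precisely the approach the paper intends: the paper's entire proof is the single sentence ``By an obvious effectivization of the well-known proof of the implication, one gets,'' and you have supplied exactly that effectivization (reduce a Sep-instance for $\theta$ to the Rep-instance for the function $z\mapsto\{z\}$ or $\emptyset$, then recover the separated set from the image). Your discussion of the scheme-level indexing and the collapse of the $t$-reduction to the ordinary $\leq_{\text{oW}}$ reduction for $\Pi_2$ instances is more careful than anything the paper provides.
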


\section{Conclusion and Further Work}

We have introduced notions of effectivity, reducibility and `case-wise' reducibility applicable to mathematical objects of arbitrary cardinality. The approach to effectivity is supported by the remarkable conceptual stability of ordinal computability (see e.g. \cite{Fi} or \cite{Ca}) and moreover, while not equivalent to e.g. the approach by Hodges, agrees with it concerning the results obtained so far. With regard to reducibility, we have seen how set-theoretical techniques can be used to distinguish between various versions of set-theoretical principles usually regarded as equivalent. 


Clearly, there is a host of questions asking which statements are effectively reducible or (s)oW-reducible to which others. This may be viewed as a cardinality-independent version of reverse mathematics (as e.g. considered in \cite{Sh}) and the theory of the Weihrauch lattice. Apart from that, it may be interesting to consider variants of these notions with parameter-free computability replaced by other models of transfinite computation, like Infinite Time Turing Machines (\cite{HL}) or OTMs with ordinal parameters. Another worthwhile topic would be to replace (relativized) computability with (relativized) recognizability (see e.g. \cite{CSW}). 

Although Hodges did not introduce reducibility notions in his \cite{Ho}, the notion of effectivity proposed there, based on primitive-recursive set functions, can easily be supplemented with such concepts following the example of Definitions \ref{reducibility} and \ref{ordinal weihrauch reducibility}. A potential advantage is that primitive recursive set functions apply immediately to sets with no need for coding, and thus the technical issues that in our approach need to be dealt with using forcing no longer present themselves. Moreover, it will be interesting to see whether reducibility and nonreducibility results like those obtained in the present paper are `stable' in the sense that they hold for various formalizations of reducibility in the transfinite.

Another topic from the classical setting that might yield to interesting results in the uncountable realm is that of `decomposability' of statements or relations; as e.g. in \cite{BGM}, we can say that the $\Pi_{2}$-statement $\phi$ is `oW-decomposable' if and only if 
there are $\Pi_{2}$-statements $\psi,\psi^{\prime}<_{\text{oW}}\phi$ such that $\phi$ is a $\leq_{\text{oW}}$-least upper bound for 
$\psi$ and $\psi^{\prime}$ in the $\leq_{\text{oW}}$-ordering. As a special case, we say that a $\Pi_{2}$-statement $\phi$ is `partitionable' if and only if there are disjoint OTM-decidable classes $X,Y\subseteq V$ such that $X\cup Y=V$ and such that both
$R_{0}:=\{(x,y):(x\in X\wedge (x,y)\in R_{\phi})\vee (x\notin X \wedge y=\emptyset)\}$ and
$R_{1}:=\{(y,z):(y\in Y\wedge (y,z)\in R_{\phi})\vee (y\notin Y \wedge z=\emptyset)\}$ are strictly oW-reducible to $R_{\phi}$.
A crucial step in studying partitionability of statements would be the verification of the following conjecture, which is currently open\footnote{See, however, the discussion on MathOverflow at \url{https://mathoverflow.net/questions/314053/delta1-2-and-degrees-of-constructibility-textbfon-sets}}:

\bigskip
\textbf{Conjecture}: Let $F:V\rightarrow\{0,1\}$ be OTM-computable. Then one of $F^{-1}[0]$ and $F^{-1}[1]$ contains sets of every degree of constructibility.

\bigskip

Relative to this conjecture, it is rather easy to obtain the following from the fact that the well-ordering principle holds in any $L[X]$ when $X$ is a well-ordered transitive set:

\bigskip
\textbf{Conjecture}: WO is not partitionable.

\bigskip

Finally, various notions from classical computability theory could be incorporated into our framework: For example, one should be able to make sense of the concept of a `random construction' and ask whether there are interesting non-effective constructions that are reducible to them. We will also consider candidates for a sensible notion of a `jump operator' for construction problems, a notion that led to a number of fascinating results about Weihrauch reducibility (\cite{BGM}).

\section{Acknowledgements}

First of all, we want to express our gratitude to the organizers of the Dagstuhl workshop 
"Measuring the Complexity of Computational Content: From Combinatorial Problems to Analysis" for providing 
an inspiring incentive to continue our work on this topic and writing this paper.

We thank the anonymous referees of our CiE 2016 paper \cite{Ca16} on which this paper is based for their help in improving the presentation. 

Finally, we thank De Gruyter for the permission to use some of the material appearing in chapter $8$ of the forthcoming \cite{Ca19} in this article.

%


\newpage

\end{document}